\documentclass[a4paper,12pt]{article}

\usepackage{amscd}
\usepackage{amsfonts}
\usepackage{amsmath}
\usepackage{amssymb}
\usepackage{amsthm}
\usepackage[T1]{fontenc} % changing encode
\usepackage{here} % [h] for tables
\usepackage{mathrsfs} % \mathscr
\usepackage{txfonts} % colneqq
\usepackage[all]{xy} % xypic
\usepackage{algorithm} % pseudocode
\usepackage{algpseudocode} %pseudocode
\usepackage{diagbox} % \diagbox
\usepackage{listings} %lstlisting

\allowdisplaybreaks

\theoremstyle{plain}
\newtheorem{thm}{Theorem}[section]
\newtheorem{lmm}[thm]{Lemma}
\newtheorem{prp}[thm]{Proposition}
\newtheorem{crl}[thm]{Corollary}

\theoremstyle{definition}
\newtheorem{dfn}[thm]{Definition}

\newtheorem{exm}[thm]{Example}

\newcommand{\vs}[1][0.2]{\vspace{#1in}\noindent\ignorespaces}
\newcommand{\ba}{\begin{array*}}
\newcommand{\ea}{\end{array*}}
\newcommand{\be}{\begin{eqnarray*}}
\newcommand{\ee}{\end{eqnarray*}}
\newcommand{\bi}{\begin{itemize}}
\newcommand{\ei}{\end{itemize}}
\newcommand{\bb}{\vs\begin{itembox}}
\newcommand{\eb}{\end{itembox}}
\newcommand{\bc}{\begin{center}}
\newcommand{\ec}{\end{center}}
\newcommand{\bs}{\vs\begin{screen}}
\newcommand{\es}{\end{screen}}

\def\ens#1{{\mathchoice{\left\{ #1 \right\}}{\{ #1 \}}{\{ #1 \}}{\{ #1 \}}}}
\def\set#1#2{{\mathchoice{\left\{ #1 \ \middle| \ #2 \right\}}{\{ #1 \mid #2 \}}{\{ #1 \mid #2 \}}{\{ #1 \mid #2 \}}}}
\def\r#1{\text{\rm #1}}

\def\Bigv#1{\left| #1 \right|}
\def\v#1{{\mathchoice{\Bigv{#1}}{| #1 |}{| #1 |}{| #1 |}}}
\def\Bign#1{\left\| #1 \right\|}
\def\n#1{{\mathchoice{\Bign{#1}}{\| #1 \|}{\| #1 \|}{\| #1 \|}}}

\def\ol#1{\overline{#1}{}}

\newcommand{\bC}{\mathbb{C}}

\newcommand{\bN}{\mathbb{N}}

\newcommand{\bR}{\mathbb{R}}

\newcommand{\bZ}{\mathbb{Z}}

\newcommand{\cF}{\mathscr{F}}

\newcommand{\cO}{\mathscr{O}}
\newcommand{\cP}{\mathscr{P}}

\newcommand{\cU}{\mathscr{U}}

\newcommand{\rC}{\r{C}}

\newcommand{\rM}{\r{M}}

\newcommand{\C}{\bC}

\newcommand{\N}{\bN}

\newcommand{\R}{\bR}
\newcommand{\Z}{\bZ}

\newcommand{\Cp}{\mathbb{C}_p}

\newcommand{\Qp}{\mathbb{Q}_p}

\newcommand{\Zp}{\mathbb{Z}_p}

\newcommand{\ch}{\r{ch}}

\newcommand{\ev}{\r{ev}}

\newcommand{\GL}{\r{GL}}

\newcommand{\Hom}{\r{Hom}}

\newcommand{\im}{\r{im}}

\newcommand{\Mod}{\r{Mod}}

\newcommand{\pr}{\r{pr}}

\newcommand{\SL}{\r{SL}}

\newcommand{\Sym}{\r{Sym}}

\newcommand{\ZFC}{\textsf{ZFC}}

\newcommand{\WC}{\r{WC}}

\algnewcommand\algorithmicbreak{{\bf break}}
\algnewcommand\Break{\algorithmicbreak{}}
\algnewcommand\algorithmiccontinue{{\bf continue}}
\algnewcommand\Continue{\algorithmiccontinue{}}

\title{Schneider--Teitelbaum Duality over a Non-spherically Complete Field}
\author{Tomoki Mihara}
\date{}

\begin{document}

\maketitle
%\address
\begin{abstract}
We formulate Schneider--Teitelbaum duality between wide classes of Banach $k$-linear representations of $G$ and left $O_k[[G]]$-modules for a non-spherically complete field $k$, e.g.\ $\Cp$, and a profinite group $G$. We interpret a topological notion of a weak variant of irreducibility of a Banach $k$-linear representation of $G$ into a purely algebraic notion of a certain simplicity of the dual left $O_k[[G]]$-module. As applications, we give two $p$-adic families of infinite dimensional Banach $\Cp$-linear representations of a $p$-adic Lie group satisfying the weak irreducibility.
\end{abstract}

\tableofcontents
%\fn{}{}

\section{Introduction}
\label{Introduction}

We formulate a new duality as a partial extension of Schneider--Teitelbaum duality and also as an application of the non-Archimedean counterpart of Specker phenomenon. We give brief introduction to Schneider--Teitelbaum duality and Specker phenomenon together with preceding studies.

\vs
Let $k$ be a complete valuation field with a non-trivial valuation. We denote by $O_k$ the valuation ring of $k$. W.\ H.\ Schikhof introduced a contravariant categorical equivalence between Banach $k$-vector spaces and embeddable absolutely convex complete edged compactoids over $O_k$ in \cite{Sch95} Theorem 4.6. The abstract equivalence restricted to the case where $k$ is a local field, which we call {\it Schikhof duality}, is realised as the continuous dual functors between Banach $k$-vector spaces and compact Hausdorff flat linear topological $O_k$-modules (cf.\ \cite{ST02} Theorem 1.2 and \cite{Mih21-1} Proposition 1.7).

\vs
The key properties of Banach $k$-vector spaces and compact Hausdorff flat linear topological $O_k$-modules for this case is that every Banach $k$-vector space is isomorphic to the completed direct sum $\rC_0(I,k)$ and every compact Hausdorff flat linear topological $O_k$-module is isomorphic to the direct product $O_k^I$ for some set $I$. Namely, Schikhof duality interprets the completed direct sum and the direct product into each other.

\vs
There have ever been various extensions of Schikhof duality. First, a Banach $k$-vector space is naturally identified with a Banach $k$-linear representation of the trivial group $\ens{*}$, and a compact Hausdorff flat linear topological $O_k$-module is naturally identified with a compact Hausdorff flat linear topological $O_k[[\ens{*}]]$-module. P.\ Schneider and J.\ Teitelbaum extended Schikhof duality to a duality between Banach $k$-linear representations of a profinite group $G$ and compact Hausdorff flat linear topological $O_k[[G]]$-modules for the case where $k$ is a local field with $\ch(k) = 0$, which we call {\it Schneider--Teitelbaum duality}, in \cite{ST02} Theorem 2.3.

\vs
We extended Schikhof duality to dualities of several classes of locally convex $k$-vector spaces and linear topological $O_k$-modules in \cite{Mih21-1} Theorem 3.6, Theorem 3.20, and Theorem 3.32. Although there are many other duality theories on locally convex spaces (cf.\ \cite{Sch84} and \cite{Sch02}), our duality differs from them at the point that we considered an extension of Schikhof duality and dealt with linear topological $O_k$-modules.

\vs
Moreover, we extended Schikhof duality to specific symmetric monoidal categories in \cite{Mih21-1} Theorem 2.2 to obtain a duality between Abelian groups and rigid analytic Abelian groups as a non-Archimedean analogue of Pontryagin duality in \cite{Mih21-1} Theorem 3.5 and Theorem 3.16.

\vs
Further, we extended Schneider--Teitelbaum duality to unitarisable Banach $k$-linear representations of $G$ for the case where $G$ is a locally profinite group and $k$ is a local field without the restriction of $\ch(k)$ in \cite{Mih21-2} Theorem 3.17.

\vs
Our aim is to formulate Schneider--Teitelbaum duality for the case where $G$ is a profinite group and $k$ is a non-spherically complete field. We note that every local field is spherically complete, and the compactness of the continuous dual $O_k$-module of a Banach $k$-vector space does not hold for a complete valuation field which is not necessarily a local field. Therefore, in order to formulate Schneider--Teitelbaum duality for the case, we need to consider another class of $O_k$-modules than compact Hausdorff flat linear topological $O_k$-modules.

\vs
One candidate is the class of embeddable absolutely convex complete edged compactoids over $O_k$ as in the original abstract duality by Schikhof, but we chose another candidate: the class of (not topological) $O_k$-modules isomorphic to the direct product of copies of $O_k$ with a mild assumption of the cardinality of the index set because of the non-Archimedean counterpart of Specker phenomenon.

\vs
Now we focus on Specker phenomenon and related studies. To begin with, as an analogue of the non-reflexivity of $\ell^1(I,\C)$ and its dual Banach space $\ell^{\infty}(I,\C)$ for an infinite set $I$, the completed direct sum $\rC_0(I,k)$ and its dual Banach space $\ell^{\infty}(I,k)$, which is the localisation of the direct product $O_k^I$, are not reflexive if $k$ is spherically complete. More generally, as an analogue of the non-reflexivity of infinite dimensional vector spaces, if $k$ is spherically complete, a Banach $k$-vector space $V$ is reflexive if and only if $\dim_k V < \infty$ (cf.\ \cite{Roo78} 4.16).

\vs
However, if $k$ is non-spherically complete, then both of $\rC_0(I,k)$ and $\ell^{\infty}(I,k)$ are reflexive for any countable set $I$, and more generally, every Banach $k$-vector space of countable type and its dual Banach $k$-vector space are reflexive (cf.\ \cite{Roo78} 4.16 and 4.17). The result is extended to the case where $\# I$ is not $\aleph_1$-measurable (cf.\ \cite{Roo78} 4.21 and \cite{MN89} \S 12 Corollary 7.18). K.\ Eda further extended it to the case without restriction of $\# I$ (cf.\ \cite{MN89} \S 12 Theorem 7.17) in terms of projections associated to ultrafilters.

\vs
These results are non-Archimedean counterparts of studies of Abelian groups originated from Specker. First, {\it Specker's theorem} (cf.\ \cite{Spe50}) states that for any countable set $I$, the canonical morphism $\Z^{\oplus I} \to \Hom(\Z^I,\Z)$ is an isomorphism, and hence $\Z^{\oplus I}$ and $\Z^I$ are reflexive. Specker phenomenon refers to this pathological property of $\Z^I$ for a countable set $I$, which is introduced in \cite{Eda83} and is named by A.\ Blass in \cite{Bla92}, and has been deeply studied in various settings especially with slender groups and Fuchs-44-groups.

\vs
Specker's theorem is extended by A.\ Ehrenfeucht and J.\ {\L}o\'s in \cite{EL54} and by E.\ C.\ Zeeman \cite{Zee55} independently to what is nowadays called {\it {\L}o\'s's theorem}, which states that for any set $I$, if $\# I$ is not $\aleph_1$-measurable, then the canonical morphism $\Z^{\oplus I} \to \Hom(\Z^I,\Z)$ is an isomorphism, and hence $\Z^{\oplus I}$ and $\Z^I$ are reflexive.

\vs
K.\ Eda further extended {\L}o\'s's theorem in \cite{Eda82} Corollary 2 in terms of projections associated to ultrafilters. The combination of {\L}o\'s's theorem and Eda's theorem is called {\it {\L}o\'s--Eda theorem}.

\vs
Suppose that $k$ is non-spherically complete. The reflexivity of $\rC_0(I,k)$ and $\ell^{\infty}(I,k)$ for a countable set $I$ is a non-Archimedean counterpart of Specker's theorem. The reflexivity of $\rC_0(I,k)$ and $\ell^{\infty}(I,k)$ for a set $I$ such that $\# I$ is not $\aleph_1$-measurable is a non-Archimedean counterpart of {\L}o\'s's theorem. Eda's extension of the reflexivity of $\rC_0(I,k)$ and $\ell^{\infty}(I,k)$ in terms of projections associated to ultrafilters is a non-Archimedean counterpart of {\L}o\'s--Eda theorem. In this way, duality for Banach spaces over non-spherically complete field can be regarded as a non-Archimedean counterpart of studies of Specker phenomenon.

\vs
In this paper, we apply the reflexivity of $\rC_0(I,k)$ and $\ell^{\infty}(I,k)$ for a set $I$ such that $\# I$ is not $\aleph_1$-measurable to formulate a partial extension of Schneider--Teitelbaum duality to the case where $k$ is non-spherically complete. Unlike the original duality, we formulate a duality between a wide class of a Banach $k$-linear representation of a profinite group $G$ and a wide class of a (not topological) left $O_k[[G]]$-modules. Namely, our new duality interprets topological notions into purely algebraic notions.

\vs
In particular, we provide a criterion for a weak variant of the irreducibility of a Banach $k$-linear representation of $G$ in terms of a purely algebraic condition of the corresponding left $O_k[[G]]$-module for the case where $k$ is non-spherically complete, as an analogue of the interpretation of the irreducibility of a Banach $k$-linear representation of $G$ to the simplicity of a left $O_k[[G]]$-module for the case where $k$ is a local field of characteristic $0$ (cf.\ \cite{ST02} Corollary 3.6).

\vs
As applications, we give two $p$-adic families of infinite dimensional Banach $\Cp$-linear representations of a $p$-adic Lie group satisfying the weak irreducibility. One is a $p$-adic family interpolating symmetric products of the canonical representation of $\GL_2(\Zp)$ parametrised by weights in $O_{\Cp}$. Another one is a $p$-adic family given by continuous parabolic inductions of $\Cp$-valued characters of the diagonal torus of $\GL_2(\Zp)$.

\vs
We briefly explain contents of this paper. In \S \ref{Convention}, we introduce convention for this paper. In \S \ref{Dual Space}, we introduce the dual of a  Banach $k$-vector space as an $O_k$-module, and study its properties. In \S \ref{Schikhof Duality}, we formulate a partial extension of Schikhof duality for the case where $k$ is spherically complete, and show automatic continuity theorem for $O_k$-modules isomorphic to the direct product of copies of $O_k$ such that the cardinality of the index set is not $\aleph_1$-measurable. In \S \ref{Schneider--Teitelbaum Duality}, we formulate a partial extension of Schneider--Teitelbaum duality for the case where $k$ is spherically complete, and provide a criterion of the weak irreducibility in terms of the dual left $O_k[[G]]$-module. In \S \ref{Example}, we introduce two examples of $p$-adic families of weakly irreducible Banach $\Cp$-linear representations of a $p$-adic Lie group.

\section{Convention}
\label{Convention}

For a set $X$, we denote by $\# X$ its cardinality. We denote by $\N$ the set of non-negative integers, and set $\aleph_0 \coloneqq \# \N$. We denote by $\aleph_1$ the least uncountable cardinal. For a set $X$, we denote by $\cP_{< \aleph_0}(X)$ the set of finite subsets of $X$.

\vs
For sets $X$ and $Y$, we denote by $X^Y$ the set of maps $Y \to X$. When we handle a sequence or a family $v$ indexed by a set $I$, we frequently use the map notation $v(i)$ instead of the subscript notation $v_i$ to point the entry at $i \in I$, in order to avoid massive use of subscripts.

\vs
For a set $X$, an $x \in X$, and a binary relation $R$ on $X$, we set $X_{R x} \coloneqq \set{x' \in X}{x' R x}$. We note that every $d \in \N$ is set-theoretically identical to $(\aleph_0)_{< d}$. Therefore, for a set $X$, $X^d$ formally means $X^{(\aleph_0)_{< d}}$, which is naturally identified with the set of $d$-tuples in $X$.

\vs
For a set $X$ and a map $f \colon X \to \R_{\geq 0}$, we denote by $\sup_{x \in X} f(x)$ the supremum of the image of $f$ in $\R_{\geq 0} \sqcup \ens{\infty}$. In particular, $\sup_{x \in X} f(x)$ for the case $X = \emptyset$ is $0$ rather than $- \infty$ in our context.

\vs
A {\it complete valuation field} is a field $k$ equipped with a map $\v{\cdot} \colon k \to \R_{\geq 0}$ called a {\it (multiplicative) valuation} satisfying the following:
\bi
\item[(1)] For any $c \in k$, $\v{c} = 0$ holds if and only if $c = 0$ holds.
\item[(2)] For any $(c_0,c_1) \in k^2$, $\v{c_0 - c_1} \leq \max \ens{\v{c_0},\v{c_1}}$ holds.
\item[(3)] For any $(c_0,c_1) \in k^2$, $\v{c_0 c_1} = \v{c_0} \ \v{c_1}$ holds.
\item[(4)] The ultrametric on $k$ defined by
\be
k^2 & \to & \R_{\geq 0} \\
(c_0,c_1) & \mapsto & \v{c_0 - c_1}
\ee
is complete.
\ei
The reader should be careful not to confound the notations of the valuation $\v{\cdot}$ and the cardinality $\#$.

\vs
Throughout this paper, $k$ denotes a complete valuation field. We denote by $O_k$ the valuation ring $\set{c \in k}{\v{c} \leq 1}$ of $k$, and by $\Mod(O_k)$ the symmetric monoidal category of $O_k$-modules and the tensor product over $O_k$. We say that the valuation of $k$ is {\it non-trivial} if $\v{k} \neq \ens{0,1}$, and that $k$ is {\it non-spherically complete} if there exists a sequence of closed balls in $k$ satisfying the finite intersection property with empty intersection. If $k$ is non-spherically complete, then $\v{k}$ is dense in $\R_{\geq 0}$. A typical example of a non-spherically complete field is $\Cp$ for a prime number $p$. Henceforth, we always assume that the valuation of $k$ is non-trivial. 

\vs
A {\it Banach $k$-vector space} is a $k$-vector space $V$ equipped with a map $\n{\cdot} \colon V \to \R_{\geq 0}$ called a {\it norm} satisfying the following:
\bi
\item[(1)] For any $v \in V$, $\n{v} = 0$ holds if and only if $v = 0$ holds.
\item[(2)] For any $(v_0,v_1) \in V^2$, $\n{v_0 - v_1} \leq \max \ens{\n{v_0},\n{v_1}}$ holds.
\item[(3)] For any $(c,v) \in k \times V$, $\n{cv} = \v{c} \ \n{v}$ holds.
\item[(4)] The ultrametric on $V$ defined by
\be
V^2 & \to & \R_{\geq 0} \\
(v_0,v_1) & \mapsto & \n{v_0 - v_1}
\ee
is complete.
\ei
Let $V$ and $W$ be Banach $k$-vector spaces. A $k$-linear homomorphism $f \colon V \to W$ is said to be {\it bounded} if there exists a $C \in \R_{\geq 0}$ such that for any $v \in V$, $\n{f(v)} \leq C \n{v}$ holds, and is said to be {\it contracting} if such a $C$ can be chosen to be $1$. We denote by $\n{f}$ the infimum of such a $C$, and call it {\it the operator norm of $f$}. We denote by $\Hom(V,W)$ the Banach $k$-vector space of bounded $k$-linear homomorphisms $V \to W$ equipped with the operator norm, and by $\Hom_{\leq 1}(V,W)$ its subset of contracting $k$-linear homomorphisms.

\vs
Let $W$ be a closed $k$-linear subspace of a Banach $k$-vector space $V$. Then $W$ forms a Banach $k$-vector space with respect to the restriction of the structure of $V$, and the quotient $k$-vector space $V/W$ also forms a Banach $k$-vector space with respect to the quotient norm
\be
V/W & \to & \R_{\geq 0} \\
\ol{v} & \mapsto & \inf_{v \in \ol{v}} \n{v}.
\ee
We always regard $W$ and $V/W$ as Banach $k$-vector spaces in these ways.

\vs
Let $I$ be a set. We denote by $\ell^{\infty}(I,k)$ the Banach $k$-vector space of maps $v \colon I \to k$ with $\sup_{i \in I} \v{f(i)} < \infty$ equipped with the supremum norm
\be
\ell^{\infty}(I,k) & \to & \R_{\geq 0} \\
f & \mapsto & \sup_{i \in I} \v{f(i)},
\ee
and by $\rC_0(I,k) \subset \ell^{\infty}(I,k)$ the closed $k$-linear subspace given by
\be
\set{f \in \ell^{\infty}(I,k)}{\forall \epsilon \in \R_{> 0} \left[ \# \set{i \in I}{\n{v(i)} \geq \epsilon} < \aleph_0 \right]}.
\ee
For an $i \in I$, we abuse the convention $\delta_i$ without specifying $I$ for the characteristic function $I \to k$ of $\ens{i}$ regarded as an element of $\rC_0(I,k) \cap O_k^I \subset \ell^{\infty}(I,k)$.

\vs
For an $f \in \rC_0(I,k)$, we denote by $\sum_{i \in I} f(i) \in V$ the usual sum of $f$ if $\# I < \aleph_0$, and otherwise the unique $c \in k$ satisfying that for any $\epsilon \in \R_{> 0}$, there exists an $I_0 \in \cP_{< \aleph_0}(I)$ such that $\n{f(i)} < \epsilon$ holds for any $i \in I \setminus I_0$ and $\n{c - \sum_{i \in I_0} f(i)} < \epsilon$ holds.

\section{Dual Space}
\label{Dual Space}

Let $V$ be a Banach $k$-vector space. For a Banach $k$-vector space $V$, we denote by $V^{\circ}$ the underlying $O_k$-module of the closed unit ball $\set{v \in V}{\n{v} \leq 1}$. We set $V^* \coloneqq \Hom(V,k)$, and $V^{\vee} \coloneqq (V^*)^{\circ}$. We call $V^{\vee}$ {\it the dual of $V$}. We note that $V^{\vee}$ coincides with the underlying $O_k$-module of $\Hom_{\leq 1}(V,k)$. Since the valuation of $k$ is non-trivial, the inclusion $V^{\vee} \hookrightarrow V^*$ induces a $k$-linear isomorphism $k \otimes_{O_k} V^{\vee} \to V^*$.

\vs
Let $M$ be an $O_k$-module. We denote by $M^{\vee}$ the closed $k$-linear subspace of $\ell^{\infty}(M,k)$ consisting of elements which are $O_k$-linear homomorphisms. We call $M^{\vee}$ {\it the dual of $M$}. Since we abuse the convention of the dual for a Banach $k$-vector space and an $O_k$-module, we distinguish a Banach $k$-vector space and its underlying $O_k$-module.

\subsection{Banach Localisation and Weak Topology}
\label{Banach Localisation and Weak Topology}

Let $M$ be a torsion-free $O_k$-module. Since $M$ is torsion-free, the canonical $O_k$-linear homomorphism $M \to k \otimes_{O_k} M$ is injective, and hence we identify $M$ with its image in $k \otimes_{O_k} M$. Similarly, for any $O_k$-submodule $L \subset M$, we identify $k \otimes_{O_k} L$ with its image in $k \otimes_{O_k} M$.

\begin{dfn}
\label{saturated}
Let $M$ be an $O_k$-module. When $M$ is torsion-free, we denote by $\n{\cdot}_M$ the seminorm
\be
k \otimes_{O_k} M & \to & \R_{\geq 0} \\
m & \mapsto & \inf \set{\v{c}^{-1}}{c \in k^{\times} \land cm \in M}
\ee
on $k \otimes_{O_k} M$, and call it {\it the gauge associated to $M$}. We say that $M$ is {\it generically complete} if $M$ is torsion-free and $\n{\cdot}_M$ is a complete norm, in which case we denote by $kM$ the resulting Banach $k$-vector space $(k \otimes_{O_k} M,\n{\cdot})$ and call it {\it the Banach localisation of $M$}. We say that $M$ is an {\it saturated} if $M$ is generically complete and coincides with $(kM)^{\circ}$.
\end{dfn}

The seminorm $\n{\cdot}_M$ is called {\it the gauge}. We recall elementary properties of gauges well-known to experts.

\begin{prp}
\label{saturated closed unit ball}
Suppose that $\v{k}$ is dense in $\R_{\geq 0}$. Let $V$ be a Banach $k$-vector space. Then $V^{\circ}$ is saturated, and the scalar extension $k(V^{\circ}) \to V$ of the inclusion $V^{\circ} \hookrightarrow V$ is an isometric $k$-linear isomorphism.
\end{prp}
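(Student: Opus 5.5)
The argument has four steps. First identify $k \otimes_{O_k} V^{\circ}$ with $V$. Then show that the gauge $\n{\cdot}_{V^{\circ}}$ equals the original norm of $V$; this is the only place the density of $\v{k}$ is used. From this, completeness of the gauge and the equality $(kV^{\circ})^{\circ} = V^{\circ}$ follow at once.

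\textbf{Step 1: the scalar extension is a $k$-linear isomorphism.} $V^{\circ}$ is a sub-$O_k$-module of the $k$-vector space $V$, so it is torsion-free. Consider the scalar extension $\iota \colon k \otimes_{O_k} V^{\circ} \to V$, $c \otimes v \mapsto cv$.
\begin{itemize}
\item Surjectivity: since the valuation is non-trivial, for any $v \in V$ we can pick $c \in k^{\times}$ with $\n{cv} \leq 1$, and then $v = \iota(c^{-1} \otimes cv)$.
\item Injectivity: every element of $k \otimes_{O_k} V^{\circ}$ can be written as a single tensor $c^{-1} \otimes w$ with $w \in V^{\circ}$. To see this, clear denominators using a common $c$ of large absolute value, noting that finitely many elements of $k$ have a common multiple bound. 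Then $\iota(c^{-1} \otimes w) = c^{-1} w = 0$ forces $w = 0$.
\end{itemize}
Hence $\iota$ is a $k$-linear isomorphism, and we identify $k \otimes_{O_k} V^{\circ}$ with $V$.

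\textbf{Step 2: the gauge equals the norm.} Under this identification,
\[
\n{v}_{V^{\circ}} = \inf \set{\v{c}^{-1}}{c \in k^{\times},\ \v{c}\,\n{v} \leq 1}.
\]
The condition $\v{c}\,\n{v} \leq 1$ says $\v{c}^{-1} \geq \n{v}$, so every element of the set is at least $\n{v}$ and the infimum is $\geq \n{v}$. For the reverse inequality:
\begin{itemize}
\item If $v = 0$, every $c \in k^{\times}$ qualifies, and the infimum is $0$ because $\v{k^{\times}}$ contains arbitrarily large values (non-trivial valuation).
\item If $v \neq 0$, density of $\v{k}$ in $\R_{\geq 0}$ gives $c \in k^{\times}$ with $\v{c}^{-1}$ in $[\n{v}, \n{v} + \epsilon)$ for any $\epsilon > 0$. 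Concretely, choose $\v{c}$ in the interval $\bigl((\n{v}+\epsilon)^{-1}, \n{v}^{-1}\bigr]$.
\end{itemize}
So $\n{\cdot}_{V^{\circ}} = \n{\cdot}$. This is the main point of the proof: without density the gauge could exceed the norm, and $V^{\circ}$ need not be recovered as the closed unit ball of the gauge.

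\textbf{Step 3: conclusions.} By Step 2 the gauge is a norm, and it is complete because $V$ is Banach. So $V^{\circ}$ is generically complete, and $\iota \colon k(V^{\circ}) \to V$ is an isometric $k$-linear isomorphism. Finally, $(kV^{\circ})^{\circ}$ is the set of $v$ with $\n{v}_{V^{\circ}} \leq 1$, which by Step 2 is $\set{v}{\n{v} \leq 1} = V^{\circ}$. Therefore $V^{\circ}$ is saturated.
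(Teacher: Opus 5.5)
Your proof is correct and is essentially the paper's argument: the paper simply cites \cite{Sch02} Lemma 2.2 ii, which amounts to your Step 2 (the gauge of the closed unit ball recovers the norm when $\v{k}$ is dense), while your Step 1 is the routine identification $k \otimes_{O_k} V^{\circ} \cong V$ that the paper takes for granted. One small correction to the closing remark of Step 2: $\n{v} \leq \n{v}_{V^{\circ}}$ always holds and $c = 1$ is admissible whenever $\n{v} \leq 1$, so $V^{\circ} = \set{v \in V}{\n{v}_{V^{\circ}} \leq 1}$ holds even without density; density is needed only for the isometry, not for recovering $V^{\circ}$.
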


\begin{proof}
The assertion immediately follows from \cite{Sch02} Lemma 2.2 ii.
\end{proof}

\begin{crl}
\label{dual norm}
Suppose that $\v{k}$ is dense in $\R_{\geq 0}$. Then for any Banach $k$-vector space $V$, $V^{\vee}$ is saturated, and the scalar extension $k(V^{\vee}) \to V^*$ of the inclusion $V^{\vee} \hookrightarrow V^*$ is an isometric $k$-linear isomorphism.
\end{crl}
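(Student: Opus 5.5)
This is a direct application of Proposition \ref{saturated closed unit ball} to the Banach $k$-vector space $V^* = \Hom(V,k)$. First, $V^*$ with the operator norm is a Banach $k$-vector space: this is the standard completeness of $\Hom(V,W)$ for complete $W$, already used implicitly in the Convention where $\Hom(V,W)$ is called a Banach $k$-vector space. By definition $V^{\vee} = (V^*)^{\circ}$. Proposition \ref{saturated closed unit ball} applied to $V^*$, under the standing hypothesis that $\v{k}$ is dense in $\R_{\geq 0}$, then gives both claims:
\begin{itemize}
\item $(V^*)^{\circ}$ is saturated;
\item the scalar extension $k((V^*)^{\circ}) \to V^*$ of the inclusion is an isometric $k$-linear isomorphism.
\end{itemize}

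The only point needing care is the identification $V^{\vee} = (V^*)^{\circ}$ with the underlying $O_k$-module of $\Hom_{\leq 1}(V,k)$. A functional of operator norm $\leq 1$ need not a priori be contracting, since the operator norm is an infimum of admissible constants $C$. However, the set of admissible $C$ is closed: if $\n{f(v)} \leq C_n \n{v}$ for a sequence $C_n \downarrow C$, then $\n{f(v)} \leq C\n{v}$. So the infimum is attained and the two notions coincide, as the paper already notes. This is the only step beyond quoting the proposition, so no real obstacle arises.

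Density of $\v{k}$ is what makes the gauge of $(V^*)^{\circ}$ coincide with the operator norm, rather than merely being equivalent to it. This is exactly the content of the cited proposition, so it does not need to be re-proved here.
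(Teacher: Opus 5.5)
Your proposal is correct and is exactly the paper's argument: the paper's proof simply applies Proposition \ref{saturated closed unit ball} to the Banach $k$-vector space $V^*$, using $V^{\vee} = (V^*)^{\circ}$ by definition. Your remark that the operator norm is attained is true but not needed here, since $V^{\vee}$ is defined directly as $(V^*)^{\circ}$.
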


\begin{proof}
The assertion is a direct consequence of Proposition \ref{saturated closed unit ball} applied to $V^*$.
\end{proof}

\begin{crl}
\label{cofree implies saturated}
Suppose that $\v{k}$ is dense in $\R_{\geq 0}$. Then for any set $I$, $O_k^I$ is saturated.
\end{crl}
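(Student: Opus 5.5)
The plan is to reduce the statement to Proposition \ref{saturated closed unit ball} by identifying $O_k^I$ with the closed unit ball of the Banach $k$-vector space $\ell^{\infty}(I,k)$. By definition, $\ell^{\infty}(I,k)$ consists of bounded maps $I \to k$ with the supremum norm, and its closed unit ball is
\[
\set{f \colon I \to k}{\sup_{i \in I} \v{f(i)} \leq 1} = O_k^I,
\]
with the $O_k$-module structure given by pointwise operations. Hence $O_k^I = (\ell^{\infty}(I,k))^{\circ}$ as $O_k$-modules. Proposition \ref{saturated closed unit ball}, applied to $V = \ell^{\infty}(I,k)$ under the density hypothesis on $\v{k}$, then gives that $O_k^I$ is saturated.

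The only point needing care is that $\ell^{\infty}(I,k)$ is indeed a Banach $k$-vector space in the sense of \S \ref{Convention}, which the paper asserts when introducing it. If I verify it directly, completeness holds because a Cauchy sequence of bounded maps converges uniformly, coordinatewise in the complete field $k$, to a bounded limit. The ultrametric inequality and homogeneity $\n{cf} = \v{c}\,\n{f}$ follow pointwise from the axioms of the valuation.

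Alternatively, one could note $O_k^I = (\rC_0(I,k))^{\vee}$ and invoke Corollary \ref{dual norm}, but that requires identifying bounded functionals on $\rC_0(I,k)$ with $\ell^{\infty}(I,k)$. The first route avoids this, so I expect no real obstacle beyond the routine identification of the unit ball with $O_k^I$.
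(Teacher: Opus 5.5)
Your proposal is correct and matches the paper's proof: the paper identifies $O_k^I$ with the closed unit ball of $\ell^{\infty}(I,k)$ and applies the first assertion of Proposition \ref{saturated closed unit ball}. Your direct check that $\ell^{\infty}(I,k)$ is a Banach $k$-vector space is harmless but not needed, since the paper already regards it as one.
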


\begin{proof}
The assertion is a direct consequence of the first assertion of Proposition \ref{saturated closed unit ball} applied to $\ell^{\infty}(I,k)$.
\end{proof}

\begin{prp}
\label{operator norm}
Let $V$ be a Banach $k$-vector space. The restriction map $V^* \to (V^{\circ})^{\vee}$ is a $k$-linear isomorphism, and in addition if $\v{k}$ is dense in $\R_{\geq 0}$, then it is an isometry.
\end{prp}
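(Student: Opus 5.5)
Here $(V^{\circ})^{\vee}$ is the closed subspace of $\ell^{\infty}(V^{\circ},k)$ consisting of bounded $O_k$-linear maps $V^{\circ} \to k$, normed by the supremum over $V^{\circ}$. The plan is to check well-definedness, then injectivity, then surjectivity by an explicit extension, and finally compare norms; only the last step uses the density of $\v{k}$.

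\textbf{Well-definedness and injectivity.} For $f \in V^*$ and $v \in V^{\circ}$ we have $\v{f(v)} \leq \n{f} \, \n{v} \leq \n{f}$. Hence the restriction $f|_{V^{\circ}}$ is a bounded $O_k$-linear map with $\sup_{v \in V^{\circ}} \v{f(v)} \leq \n{f}$, and the restriction map is $k$-linear and contracting. For injectivity, fix a $c \in k$ with $\v{c} > 1$, using non-triviality of the valuation. Every $v \in V$ satisfies $c^{-n} v \in V^{\circ}$ for $n \gg 0$. So if $f|_{V^{\circ}} = 0$, then $f(v) = c^n f(c^{-n} v) = 0$ for all $v \in V$.

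\textbf{Surjectivity.} Given an $O_k$-linear $g \colon V^{\circ} \to k$ with $C \coloneqq \sup_{v \in V^{\circ}} \v{g(v)} < \infty$, define $f(v) \coloneqq c^n g(c^{-n} v)$ for any $n$ with $c^{-n} v \in V^{\circ}$.
\bi
\item[(1)] This is independent of $n$: if $m \geq n$, then $O_k$-linearity of $g$ applied to $c^{-(m-n)} \in O_k$ gives $c^m g(c^{-m} v) = c^n g(c^{-n} v)$.
\item[(2)] $f$ is $k$-linear: additivity and $O_k$-homogeneity follow by choosing a common large $n$. For general $a \in k$, write $a = c^{N} a'$ with $a' \in O_k$.
\item[(3)] $f$ is bounded: for $v \neq 0$, choose $n$ minimal with $\v{c}^{-n} \n{v} \leq 1$. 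Then $\v{f(v)} \leq \v{c}^n C \leq \v{c} \, C \, \n{v}$, so $\n{f} \leq \v{c} \, C$ and $f|_{V^{\circ}} = g$.
\ei
This proves the first assertion. When $\v{k}$ is discrete, the constant $\v{c}$ in this bound is a genuine loss, since the isometry can fail.

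\textbf{Isometry under density.} Assume $\v{k}$ is dense in $\R_{\geq 0}$, and let $f \in V^*$ with $C = \sup_{v \in V^{\circ}} \v{f(v)}$. Take any $v \neq 0$ and any $\epsilon > 0$. By density there is an $a \in k^{\times}$ with
\be
\n{v} \leq \v{a} \leq (1 + \epsilon) \n{v}.
\ee
Then $a^{-1} v \in V^{\circ}$, so $\v{f(v)} = \v{a} \, \v{f(a^{-1} v)} \leq (1 + \epsilon) C \n{v}$. Letting $\epsilon \to 0$ gives $\n{f} \leq C$. Together with the reverse inequality from the first step, this yields $\n{f} = C$.

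The main point needing care is this last approximation of $\n{v}$ by values in $\v{k}$. It is exactly where the density hypothesis enters, and the rest is routine bookkeeping.
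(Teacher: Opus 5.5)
Your proof is correct and is essentially the paper's argument made explicit. Your extension $f(v) = c^n g(c^{-n} v)$ is the scalar extension $k \otimes_{O_k} g$ that the paper invokes, and you supply the bound $\n{f} \leq \v{c} \, C$ the paper leaves implicit (in step (3) $n$ should range over $\Z$, which your well-definedness argument already covers); your approximation $\n{v} \leq \v{a} \leq (1+\epsilon)\n{v}$ is precisely the content of Proposition \ref{saturated closed unit ball} (the gauge of $V^{\circ}$ equals the norm of $V$), which the paper cites for the isometry.
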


\begin{proof}
For any bounded $O_k$-linear homomorphism $\psi \colon V^{\circ} \to k$, the scalar extension $k \otimes_{O_k} \psi \colon V \to k$ is bounded. This implies the first assertion. The second assertion follows from Proposition \ref{saturated closed unit ball} by the definition of the operator norm.
\end{proof}

For $O_k$-modules $M_0$ and $M_1$, we denote by $\Hom_{O_k}(M_0,M_1)$ the $O_k$-module of $O_k$-linear homomorphisms $M_0 \to M_1$. We have the following adjoint property of the Banach localisation:

\begin{prp}
\label{localisation operator norm}
Suppose that $\v{k}$ is dense in $\R_{\geq 0}$. Let $M$ be a saturated $O_k$-module and $V$ a Banach $k$-vector space. Then the restriction map $\Hom_{\leq 1}(kM,V) \to \Hom_{O_k}(M,V^{\circ})$ is an $O_k$-linear isomorphism.
\end{prp}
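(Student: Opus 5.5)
The plan is to build the inverse map explicitly and then check that the two constructions undo each other.

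First, the restriction map is well defined. Take $f \in \Hom_{\leq 1}(kM,V)$ and $m \in M$. Since $M$ is saturated, $M = (kM)^{\circ}$, so $\n{m}_M \leq 1$. As $f$ is contracting, $\n{f(m)} \leq 1$, hence $f$ restricts to an $O_k$-linear map $M \to V^{\circ}$. The restriction map is clearly $O_k$-linear. It is injective because $kM = k \otimes_{O_k} M$ is spanned over $k$ by $M$, so a $k$-linear map on $kM$ is determined by its values on $M$.

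Next, surjectivity. Given $\phi \in \Hom_{O_k}(M,V^{\circ})$, I form the scalar extension
\be
k \otimes_{O_k} \phi \colon k \otimes_{O_k} M \to k \otimes_{O_k} V^{\circ}.
\ee
I compose it with the isomorphism $k \otimes_{O_k} V^{\circ} \cong V$ of Proposition \ref{saturated closed unit ball}; concretely, this sends $c \otimes v \mapsto cv$. The result is a $k$-linear map $f \colon kM \to V$ extending $\phi$.

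It remains to show that $f$ is contracting, and this is the main point. Let $x \in kM$ and let $c \in k^{\times}$ with $cx \in M$. Then $cf(x) = \phi(cx) \in V^{\circ}$, so $\n{f(x)} \leq \v{c}^{-1}$. Taking the infimum over all such $c$ gives
\be
\n{f(x)} \leq \n{x}_M,
\ee
where $\n{x}_M$ is exactly the norm of $kM$ by Definition \ref{saturated}.

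This step needs no density hypothesis. Density of $\v{k}$ is only used through Proposition \ref{saturated closed unit ball}, which identifies $V$ with $k \otimes_{O_k} V^{\circ}$ (the identification also holds in general, since $V = \bigcup_{c} c^{-1} V^{\circ}$). So I expect no real obstacle; the only care needed is to use saturation for well-definedness and the gauge definition for the contraction estimate.
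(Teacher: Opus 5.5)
Your proof is correct. It has the same skeleton as the paper's: injectivity because $M$ spans $kM$, and surjectivity via the scalar extension of a given $O_k$-linear map $M \to V^{\circ}$. Where you differ is the key estimate that this extension is contracting.

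The paper argues by contradiction. Assuming $\n{(k\psi)(m)} > \n{m}_M$, it uses density of $\v{k}$ to choose $c \in k^{\times}$ with $\n{(k\psi)(m)}^{-1} < \v{c} \leq \n{m}_M^{-1}$. It then uses saturatedness to pass from $\n{cm}_M \leq 1$ to $cm \in M$, so that $(k\psi)(cm) = \psi(cm) \in V^{\circ}$ contradicts $\n{(k\psi)(cm)} > 1$.

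You instead bound $\n{f(x)} \leq \v{c}^{-1}$ for every $c \in k^{\times}$ with $cx \in M$, and take the infimum, which is literally the definition of $\n{x}_M$. This step needs neither density nor saturatedness.

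Two further points show that saturatedness is not used anywhere in your argument:
\begin{itemize}
\item $\n{m}_M \leq 1$ holds for every $m \in M$ already by taking $c = 1$ in the gauge, so your appeal to $M = (kM)^{\circ}$ for well-definedness is unnecessary.
\item You only need the multiplication map $k \otimes_{O_k} V^{\circ} \to V$ to exist, not Proposition \ref{saturated closed unit ball} making it an isomorphism.
\end{itemize}

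So your argument proves the statement for every generically complete $M$, without the density hypothesis. The paper's route stays within its standing hypotheses but gains nothing from them here; yours is shorter and more general.
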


\begin{proof}
Since $M$ generates $kM$, it suffices to show the surjectivity. Let $\psi$ be an $O_k$-linear homomorphism $M \to V^{\circ}$. We denote by $k \psi$ the scalar extension $kM \to V$ of $\psi$. It suffices to show $\n{k \psi} \leq 1$. Assume that there exists an $m \in kM$ such that $\n{(k \psi)(m)} > \n{m}_M$. By $\n{(k \psi)(0)} = 0 = \n{0}_M$, we have $m \neq 0$ and hence $\n{m}_M > 0$. Since $\v{k}$ is dense in $\R_{\geq 0}$, there exists a $c \in k^{\times}$ with $\n{(k \psi)(m)}^{-1} < \v{c} \leq \n{m}_M^{-1}$. We have
\be
\n{cm}_M = \v{c} \ \n{m}_M \leq 1,
\ee
and hence $cm \in M$ by the saturatedness of $M$. In particular, we have
\be
(k \psi)(cm) = \psi(cm) \in \im(\psi) \subset V^{\circ}.
\ee
On the other hand, we also have
\be
\n{(k \psi)(cm)} = \v{c} \ \n{(k \psi)(m)} > 1,
\ee
which contradicts $(k \psi)(cm) \in V^{\circ}$. Therefore, we obtain $\n{k \psi} \leq 1$.
\end{proof}

\begin{crl}
\label{Banach vs saturated}
Suppose that $\v{k}$ is dense in $\R_{\geq 0}$. Then the correspondences $M \mapsto kM$ and $V \mapsto V^{\circ}$ define a $\Mod(O_k)$-enriched equivalence between the $\Mod(O_k)$-enriched category of Banach $k$-vector spaces and contracting $k$-linear homomorphisms and the $\Mod(O_k)$-enriched category of saturated $O_k$-modules and $O_k$-linear homomorphisms.
\end{crl}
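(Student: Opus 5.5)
We plan to assemble the equivalence directly from Proposition \ref{saturated closed unit ball} and Proposition \ref{localisation operator norm}. First we check that both correspondences are well defined on objects and morphisms. For a Banach $k$-vector space $V$, the $O_k$-module $V^{\circ}$ is saturated by Proposition \ref{saturated closed unit ball}. A contracting $f \colon V \to W$ maps $V^{\circ}$ into $W^{\circ}$, so $f \mapsto f|_{V^{\circ}}$ gives an $O_k$-linear map $\Hom_{\leq 1}(V,W) \to \Hom_{O_k}(V^{\circ},W^{\circ})$. This map is compatible with composition and identities, and hence defines a $\Mod(O_k)$-enriched functor.

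In the other direction, a saturated $M$ is generically complete, so $kM$ is a Banach $k$-vector space. An $O_k$-linear map $\psi \colon M_0 \to M_1$ between saturated modules lands in $M_1 = (kM_1)^{\circ}$. By Proposition \ref{localisation operator norm}, applied with $V = kM_1$, its scalar extension $k\psi$ is contracting. The assignment $\psi \mapsto k\psi$ is $O_k$-linear and functorial, because scalar extension is.

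Next we show that $V \mapsto V^{\circ}$ is fully faithful. The key identification is that, for Banach spaces $V$ and $W$, the restriction map $\Hom_{\leq 1}(V,W) \to \Hom_{O_k}(V^{\circ},W^{\circ})$ factors as follows. First apply the isomorphism $\Hom_{\leq 1}(V,W) \cong \Hom_{\leq 1}(k(V^{\circ}),W)$, obtained by precomposing with the isometric isomorphism $k(V^{\circ}) \to V$ of Proposition \ref{saturated closed unit ball}. Then apply the restriction isomorphism $\Hom_{\leq 1}(k(V^{\circ}),W) \to \Hom_{O_k}(V^{\circ},W^{\circ})$ of Proposition \ref{localisation operator norm}, which uses that $V^{\circ}$ is saturated. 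Both maps are $O_k$-linear bijections, so the functor is fully faithful as an enriched functor.

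For essential surjectivity, take a saturated $M$. Then $M = (kM)^{\circ}$ by the very definition of saturatedness, so $M$ lies in the image on the nose. It remains to see that $M \mapsto kM$ is a quasi-inverse. The natural isomorphisms are $k(V^{\circ}) \cong V$ from Proposition \ref{saturated closed unit ball}, which is isometric and therefore an isomorphism in the contracting category, and the identity $M = (kM)^{\circ}$. Naturality of both follows because all maps involved are restrictions or scalar extensions of each other.

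No step is a serious obstacle. The only point needing care is that the isometric isomorphism $k(V^{\circ}) \cong V$ and its inverse are both contracting, so that it is an isomorphism in the enriched category; this holds because the map is an isometry. The density of $\v{k}$ is used only through the two cited propositions.
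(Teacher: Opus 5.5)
Your proposal is correct and takes the same route as the paper, whose proof just says the corollary follows immediately from Proposition \ref{saturated closed unit ball} and Proposition \ref{localisation operator norm}. You have written out the routine checks the paper leaves implicit: well-definedness on morphisms, full faithfulness via the factorisation through $\Hom_{\leq 1}(k(V^{\circ}),W)$, and the natural isomorphisms $k(V^{\circ}) \cong V$ and $M = (kM)^{\circ}$.
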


\begin{proof}
The assertion immediately follows from Proposition \ref{saturated closed unit ball} and Proposition \ref{localisation operator norm}.
\end{proof}

We studies relation of norms and duals. We consider another algebraic topology than the norm topology, which is useful in the study of duals.

\begin{dfn}
{\it The weak topology} of a Banach $k$-vector space $V$ is the weakest topology on $V$ for which every $m \in V^{\vee}$ is a continuous map $V \to k$. {\it The weak topology} of an $O_k$-module $M$ is the weakest topology on $M$ for which every $v \in M^{\vee}$ is a continuous map $M \to k$.
\end{dfn}

Here, since we distinguish a Banach $k$-vector space and its underlying $O_k$-module, the dual of a Banach $k$-vector space is distinguished from the dual of its underlying $O_k$-module. The former one is a topological notion, while the latter one is a purely algebraic notion. When we refer to a notion as purely algebraic one, we intend that we have provided a definition only using an algebraic structure. For example, the canonical topology of a finite dimensional vector space over a complete valuation field is a purely algebraic notion.

\begin{dfn}
A {\it weakly closed} (resp.\ {\it weakly open}, {\it weakly dense}) subset means a closed (resp.\ open, dense) subset with respect to the weak topology. A {\it weakly convergent} net means a net convergent with respect to the weak topology. A {\it weakly continuous map} (resp.\ {\it weak homeomorphism}) means a map continuous (resp.\ homeomorphic) with respect to the weak topologies of the domain and the codomain.
\end{dfn}

\begin{exm}
\label{weak closedness vs closedness}
For any Banach $k$-vector space $V$, every contracting homomorphism $m \in V \to k$ is continuous, and hence every weakly closed subset of $V$ is closed. On the other hand, the converse does not necessarily hold. Take a $c \in k^{\times}$ with $\v{c} < 1$. The closure of the $k$-linear subspace of $\rC_0(\N,k)$ generated by $\set{\delta_0 - c^{-i} \delta_i}{i \in \N}$ is a proper subspace because $\delta_0$ does not belong to it, but is weakly dense because the sequence $(\delta_0 - c^{-i} \delta_i)_{i \in \N}$ weakly converges to $\delta_0$. In particular, a closed $k$-linear subspace of the Banach $k$-vector space $\rC_0(\N,k)$ is not necessarily weakly closed.
\end{exm}

We introduce two notions which play important roles when we extract an information of an $O_k$-submodule of a saturated $O_k$-module from that of the Banach localisation.

\begin{dfn}
An $O_k$-submodule $L$ of an $O_k$-module $M$ is said to be {\it pure} if the equality
\be
\set{m \in M}{\exists c \in O_k \setminus \ens{0}[cm \in L]} = L,
\ee
holds, and is said to be {\it generically weakly closed} in $M$ if $M$ is generically complete and $k \otimes_{O_k} L$ is weakly closed in $kM$.
\end{dfn}

\begin{prp}
\label{purity criterion}
Let $M$ be a torsion-free $O_k$-module.
\bi
\item[(1)] For any $O_k$-submodule $L \subset M$, $L$ is pure if and only if $L = (k \otimes_{O_k} L) \cap M$.
\item[(2)] For any $k$-linear subspace $W \subset kM$, $W \cap M$ is a pure $O_k$-submodule of $M$.
\ei
\end{prp}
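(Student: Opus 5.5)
Both parts reduce to the identification $M \subset k \otimes_{O_k} M$ fixed at the start of this subsection. The key elementary fact is a description of $(k \otimes_{O_k} L) \cap M$ inside $k \otimes_{O_k} M$. Since $M$ is torsion-free, so is $L$, and every element of $k \otimes_{O_k} L$ can be written as $c^{-1} l$ with $c \in O_k \setminus \ens{0}$ and $l \in L$. This holds because any finite sum $\sum_j a_j \otimes l_j$ can be brought to a common denominator: $k$ is the fraction field of $O_k$, and any two nonzero elements of $O_k$ have one dividing the other.

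For (1), I will show that
\be
\set{m \in M}{\exists c \in O_k \setminus \ens{0}[cm \in L]} = (k \otimes_{O_k} L) \cap M .
\ee
From this the equivalence is immediate, since purity is the statement that the left-hand side equals $L$.
\bi
\item[$\subset$:] If $m \in M$ and $cm \in L$ for some $c \neq 0$, then $m = c^{-1}(cm) \in k \otimes_{O_k} L$.
\item[$\supset$:] If $m \in M$ equals $c^{-1} l$ with $l \in L$ and $c \neq 0$, then $cm = l \in L$. This uses that the identification of $M$ inside $k \otimes_{O_k} M$ is injective, so the equality $cm = l$ in $k \otimes_{O_k} M$ is an equality in $M$.
\ei

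For (2), set $L \coloneqq W \cap M$, which is clearly an $O_k$-submodule of $M$. Suppose $m \in M$ and $c \in O_k \setminus \ens{0}$ satisfy $cm \in L$. Then $cm \in W$, and since $W$ is a $k$-linear subspace, $m = c^{-1}(cm) \in W$, so $m \in W \cap M = L$. Hence the left-hand side of the purity condition is contained in $L$. The reverse inclusion is trivial, taking $c = 1$. Here $kM$ is only used as the $k$-vector space $k \otimes_{O_k} M$; no norm or completeness enters.

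There is no real obstacle. The only point needing care is the common-denominator representation of elements of $k \otimes_{O_k} L$, together with the compatibility of the identifications $k \otimes_{O_k} L \subset k \otimes_{O_k} M$ and $M \subset k \otimes_{O_k} M$. The former inclusion is injective because $k$ is flat over $O_k$, being a localisation.
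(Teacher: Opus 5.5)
Your proof is correct, and for (1) it is the paper's argument: the two inclusions of your set equality are exactly the two directions the paper checks. For (2) you verify the purity of $W \cap M$ directly from the $k$-linearity of $W$, whereas the paper deduces it from (1) via the identity $W = k \otimes_{O_k} (W \cap M)$; your route is marginally shorter because it does not need that identity.
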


\begin{proof}
(1) We have $L \subset (k \otimes_{O_k} L) \cap M$. First, assume $L = (k \otimes_{O_k} L) \cap M$. For any $m \in M$, if there exists a $c \in O_k \setminus \ens{0}$ such that $cm \in L$, then we have $m = c^{-1} cm \in k \otimes_{O_k} L$, and hence $m \in L$.

\vs
Next, assume that $L$ is pure. Let $m \in (k \otimes_{O_k} L) \cap M$. By $m \in k \otimes_{O_k} L$, there exists a $c \in O_k \setminus \ens{0}$ such that $cm \in L$. By $m \in M$ and the purity of $L$, we obtain $m \in L$.

\vs
(2) The assertion immediately follows from (1) by $W = k \otimes_{O_k} (W \cap M)$.
\end{proof}

We will give an example of a weak generically closed $O_k$-submodule in Proposition \ref{topology of perp} in the next subsection.

\subsection{Annihilator Subspace}
\label{Annihilatpr Subpace}

For a subset $W$ of a Banach $k$-vector space $V$, we set $W^{\perp} \coloneqq \set{m \in V^{\vee}}{m |_W = 0}$. For a subset $L$ of an $O_k$-module $M$, we set $L^{\perp} \coloneqq \set{v \in M^{\vee}}{v |_L = 0}$.

\begin{prp}
\label{topology of perp}
\bi
\item[(1)] For any subset $W$ of a Banach $k$-vector space $V$, $W^{\perp}$ is a weakly closed generically weakly closed pure $O_k$-submodule of $V^{\vee}$.
\item[(2)] For any subset $L$ of an $O_k$-module $M$, $L^{\perp}$ is a closed weakly closed $k$-linear subspace of $M^{\vee}$.
\ei
\end{prp}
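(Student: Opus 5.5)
The plan is to write each annihilator as an intersection of zero sets of evaluation maps, each of which is continuous for the relevant topology, and then read off the algebraic properties directly from linearity.

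For (1), the key observation is that for each $w \in W$ the evaluation map $\ev_w \colon V^{\vee} \to k$, $m \mapsto m(w)$, is $O_k$-linear. It is also continuous for the weak topology of the $O_k$-module $V^{\vee}$.

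\begin{itemize}
\item \emph{Continuity of $\ev_w$.} Choose $c \in k^{\times}$ with $\v{c} \, \n{w} \leq 1$, which is possible because the valuation is non-trivial (take $c = 1$ if $w = 0$). Then $m \mapsto m(cw)$ is an $O_k$-linear map $V^{\vee} \to k$ bounded by $1$, since $\v{m(cw)} \leq \n{cw} \leq 1$ for $m \in V^{\vee}$. Hence it lies in $(V^{\vee})^{\vee}$ and is weakly continuous by definition, and $\ev_w = c^{-1}(m \mapsto m(cw))$ is weakly continuous as well.
\item \emph{Weak closedness.} $W^{\perp} = \bigcap_{w \in W} \ev_w^{-1}(\ens{0})$ is an intersection of closed sets, since $\ens{0}$ is closed in $k$. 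It is an $O_k$-submodule because each $\ev_w$ is $O_k$-linear.
\item \emph{Purity.} If $c \in O_k \setminus \ens{0}$ and $cm \in W^{\perp}$, then $c \, m(w) = 0$ for all $w \in W$. Since $k$ is a field, $m(w) = 0$, so $m \in W^{\perp}$.
\end{itemize}

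For generic weak closedness, note that $V^{\vee}$ is generically complete with Banach localisation identified with $V^*$. When $\v{k}$ is dense this is Corollary \ref{dual norm}. In general the gauge of $V^{\vee}$ is equivalent to the operator norm, since $V^{\vee}$ is the closed unit ball, and this suffices for completeness and for the topology. I then identify $k \otimes_{O_k} W^{\perp}$ with $\set{f \in V^*}{f|_W = 0}$. The inclusion "$\subset$" is clear. For "$\supset$", given $f \in V^*$ vanishing on $W$, pick $c \in k^{\times}$ with $cf \in V^{\vee}$; then $cf \in W^{\perp}$, so $f = c^{-1}(cf)$ lies in $k \otimes_{O_k} W^{\perp}$. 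This subspace is weakly closed in $kV^{\vee} = V^*$ by the same argument: it is the intersection of zero sets of the maps $f \mapsto f(w)$, each a scalar multiple of an element of the dual of $V^{\vee}$ extended $k$-linearly. This needs the mild check that the weak topology on the Banach localisation is generated by elements of $(kM)^{\vee}$, whose restrictions to $M$ are exactly the elements of $M^{\vee}$ by Proposition \ref{operator norm}.

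For (2), for $l \in L$ consider $\ev_l \colon M^{\vee} \to k$, $v \mapsto v(l)$. It is $k$-linear and satisfies $\v{v(l)} \leq \n{v}$, since $\n{v}$ is the supremum norm on $\ell^{\infty}(M,k)$. So $\ev_l$ is a contracting functional, hence norm-continuous and an element of $(M^{\vee})^{\vee}$, hence weakly continuous. Therefore $L^{\perp} = \bigcap_{l \in L} \ker \ev_l$ is a closed and weakly closed $k$-linear subspace of $M^{\vee}$.

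The only step requiring care is the generic weak closedness in (1). There one must pin down what "weak topology of $kV^{\vee}$" means, i.e. that the functionals $f \mapsto f(w)$ on $V^*$ are continuous for it. I would handle this by noting $\n{f \mapsto f(w)} \leq \n{w}$ and rescaling $w$ to land in the contracting functionals.
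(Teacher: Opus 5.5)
Your proof is correct. For (2) it is the paper's argument: the paper writes your $\ev_l$ as $\iota(l)$ for the canonical map $\iota \colon M \to M^{\vee \vee}$ and gets weak closedness. It then deduces norm closedness from the first sentence of Example \ref{weak closedness vs closedness}, whereas you read off both at once from the fact that $\ev_l$ is contracting.

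For (1) the basic idea is shared: annihilators are common kernels of weakly continuous evaluations. The order of deduction, however, is reversed. The paper works in the localisation first. It extends the evaluations $k$-linearly to $k(V^{\vee})$, uses Proposition \ref{localisation operator norm} to see that they are contracting, and computes $k \otimes_{O_k} W^{\perp}$ as their common kernel, which gives generic weak closedness. It then pulls back along $W^{\perp} = (k \otimes_{O_k} W^{\perp}) \cap V^{\vee}$, getting purity from Proposition \ref{purity criterion} (1) and weak closedness in $V^{\vee}$ from Proposition \ref{cofree scalar extension} (2). You instead establish weak closedness and purity intrinsically in $V^{\vee}$: you rescale $w$ so that $m \mapsto m(cw)$ lies in $(V^{\vee})^{\vee}$, and you cancel $c$ in the field $k$. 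You then treat the localisation separately.

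The paper's route derives all three properties from a single computation. Yours is more self-contained and slightly more general, for three reasons.
\begin{itemize}
\item It needs no density of $\v{k}$, which Proposition \ref{localisation operator norm} assumes but the statement does not.
\item It handles explicitly the rescaling needed when $\n{w} > 1$, where the evaluation at $w$ does not land in $O_k$.
\item It avoids Proposition \ref{cofree scalar extension} (2), which is stated only for cofree modules, although $V^{\vee}$ need not be cofree when $V$ is not free.
\end{itemize}

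One minor imprecision in your write-up: the restrictions to $M$ of elements of $(kM)^{\vee}$ form the unit ball of $M^{\vee}$, not all of $M^{\vee}$. Since rescaling a functional does not affect its weak continuity, this does no harm.
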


\begin{proof}
(1) For a $v \in W$, we denote by $\ev_v$ the scalar extension $k(V^{\vee}) \to k$ of the evaluation map $V^{\vee} \to O_k$ at $v$. By Proposition \ref{localisation operator norm}, $\ev_v$ is contracting for any $v \in W$. We have
\be
k \otimes_{O_k} W^{\perp} & = & \set{m \in k(V^{\vee})}{\exists c \in k^{\times}[cm \in W^{\perp}]} \\
& = & \set{m \in k(V^{\vee})}{\exists c \in k^{\times}[cm \in V^{\vee} \land (cm) |_W = 0]} \\
& = & \set{m \in k(V^{\vee})}{\exists c \in k^{\times}[cm \in V^{\vee} \land \forall v \in W[(cm)(v) = 0]]} \\
& = & \set{m \in k(V^{\vee})}{\exists c \in k^{\times}[cm \in V^{\vee} \land \forall v \in W[\ev_v(cm) = 0]]} \\
& = & \set{m \in k(V^{\vee})}{\forall v \in W[\ev_v(m) = 0]}.
\ee
This implies that $W^{\perp}$ is generically weakly closed. We have
\be
(k \otimes_{O_k} W^{\perp}) \cap V^{\vee} = \set{m \in V^{\vee}}{\forall v \in W[\ev_v(m) = 0]} = \set{m \in V^{\vee}}{\forall v \in W[m(v) = 0]} = W^{\perp}.
\ee
This implies that weakly closed by Proposition \ref{cofree scalar extension} (2) and $W^{\perp}$ is pure by Proposition \ref{purity criterion} (1).

\vs
(2) We denote by $\iota \colon M \to M^{\vee \vee}$ the canonical $O_k$-linear homomorphism. We have
\be
L^{\perp} = \set{v \in M^{\vee}}{v |_L = 0} = \set{v \in M^{\vee}}{\forall m \in L[\iota(m)(v) = 0]},
\ee
and hence $L^{\perp}$ is weakly closed. By the first sentence of Example \ref{weak closedness vs closedness}, weak closedness implies closedness.
\end{proof}

\begin{prp}
\label{double perp}
\bi
\item[(1)] For any weakly closed $k$-linear subspace $W$ of a Banach $k$-vector space $V$, the equality $W = W^{\perp \perp}$ holds.
\item[(2)] If $\v{k}$ is dense in $\R_{\geq 0}$, then for any generically weakly closed pure $O_k$-submodule $L$ of a saturated $O_k$-module $M$, the equality $L = L^{\perp \perp}$ holds.
\ei
\end{prp}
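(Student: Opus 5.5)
Throughout, I read $W^{\perp\perp}$ and $L^{\perp\perp}$ through the canonical evaluation maps. A vector $v \in V$ is said to lie in $W^{\perp\perp}$ if every $m \in W^{\perp}$ satisfies $m(v) = 0$. Similarly, an $m \in M$ lies in $L^{\perp\perp}$ if every $\phi \in L^{\perp} \subset M^{\vee}$ satisfies $\phi(m) = 0$. With this reading, the inclusions $W \subset W^{\perp\perp}$ and $L \subset L^{\perp\perp}$ are tautological, and only the reverse inclusions need proof.

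For (1), I will use a finite-dimensional separation argument. Take $v \in V \setminus W$. Since $W$ is weakly closed, $v$ has a basic weak neighbourhood disjoint from $W$. So there are $m_1,\ldots,m_n \in V^{\vee}$ and $\epsilon \in \R_{>0}$ such that no $u \in W$ satisfies $\v{m_i(u - v)} < \epsilon$ for all $i$. Put $\mu = (m_1,\ldots,m_n) \colon V \to k^n$. The disjointness says exactly that $\mu(v) \notin \mu(W) + \set{x \in k^n}{\max_i \v{x(i)} < \epsilon}$, and in particular $\mu(v) \notin \mu(W)$. Since $\mu(W)$ is a linear subspace of the finite-dimensional space $k^n$, I choose a $k$-linear form $f \colon k^n \to k$ with $f|_{\mu(W)} = 0$ and $f(\mu(v)) \neq 0$. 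The composite $f \circ \mu$ is bounded because $f$ and every $m_i$ are bounded. Multiplying it by a suitable $c \in k^{\times}$ gives a contracting form, which is an element of $V^{\vee}$. That element lies in $W^{\perp}$ but does not vanish at $v$, so $v \notin W^{\perp\perp}$.

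For (2), I reduce to (1) using Proposition \ref{operator norm}. Since $M$ is saturated, $M = (kM)^{\circ}$, so restriction gives a $k$-linear isomorphism $(kM)^* \to M^{\vee}$. Under this isomorphism, $L^{\perp}$ corresponds to the set of bounded functionals on $kM$ vanishing on $W \coloneqq k \otimes_{O_k} L$. By generic weak closedness, $W$ is a weakly closed subspace of $kM$. Now let $m \in M$ lie in $L^{\perp\perp}$. Every $\psi \in W^{\perp} \subset (kM)^{\vee}$ restricts to an element of $L^{\perp}$, so $\psi(m) = 0$. Hence $m \in W^{\perp\perp} = W$ by (1). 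Therefore $m \in W \cap M = (k \otimes_{O_k} L) \cap M$, and this equals $L$ by the purity of $L$ and Proposition \ref{purity criterion} (1).

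The main obstacle is the bookkeeping between the two kinds of duals. Specifically, one must check that restricting $\psi \in (kM)^{\vee}$ to $M$ really gives a bounded $O_k$-linear map, i.e.\ an element of $M^{\vee} \subset \ell^{\infty}(M,k)$. This holds because $\psi$ is contracting and $M = (kM)^{\circ}$. The density of $\v{k}$ is only needed to invoke saturatedness and Proposition \ref{operator norm} in the stated form. The separation step in (1) is elementary once the weak neighbourhood is reduced to finitely many functionals.
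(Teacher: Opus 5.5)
Your proof is correct and follows essentially the paper's route. As in the paper, a basic weak neighbourhood of $v$ disjoint from $W$ gives finitely many functionals, and a linear form on the finite-dimensional image separating $\mu(v)$ from $\mu(W)$ pulls back to an element of $W^{\perp}$ not vanishing at $v$; you rescale an arbitrary form, whereas Lemma \ref{separating homomorphism} picks a contracting one on $\ell^{\infty}(F,k)$ directly. For (2) the paper likewise passes to $kM$ and uses purity via Proposition \ref{purity criterion} (1); you merely package this as a formal reduction to (1) by restricting contracting functionals on $kM$ to $M$, instead of re-applying the separation lemma to $(kM,k \otimes_{O_k} L,F,m)$.
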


In order to prove Proposition \ref{double perp} (1) and (2) in a common way, we prepare a lemma.

\begin{lmm}
\label{separating homomorphism}
Let $V$ be a Banach $k$-vector space, and $W$ a $k$-linear subspace of $V$. For any $(F,v) \in \cP_{< \aleph_0}(V^{\vee}) \times V$ such that $\sup_{m \in F} \v{m(v' - v)} > \epsilon$ for any $v' \in W$, there exists a contracting $k$-linear homomorphism $V \to k$ such that $\phi |_W = 0$ and $\phi(v) \neq 0$.
\end{lmm}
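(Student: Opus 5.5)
The plan is to reduce to finite-dimensional linear algebra by pushing everything forward along the finitely many functionals in $F$. Consider the $k$-linear map
\be
T \colon V & \to & k^F \\
u & \mapsto & (m(u))_{m \in F},
\ee
where $k^F$ carries the supremum norm $\n{x} \coloneqq \max_{m \in F} \v{x(m)}$. Each $m \in F$ lies in $V^{\vee}$ and is therefore contracting. Hence $\n{T(u)} = \max_{m \in F} \v{m(u)} \leq \n{u}$, so $T$ is contracting.

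Set $U \coloneqq T(W)$, which is a $k$-linear subspace of the finite dimensional space $k^F$. The hypothesis says exactly that $\n{T(v) - T(v')} = \sup_{m \in F} \v{m(v'-v)} > \epsilon$ for every $v' \in W$. Since $\epsilon \geq 0$, this gives $T(v) \neq T(v')$ for every $v' \in W$, so $T(v) \notin U$. I will only use this consequence, not the quantitative gap, because in finite dimension every linear functional is automatically bounded.

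By pure linear algebra, choose a complement of $U + k T(v)$ in $k^F$. Then there is a $k$-linear functional $\psi \colon k^F \to k$ with $\psi |_U = 0$ and $\psi(T(v)) = 1$. Write $\psi(x) = \sum_{m \in F} a(m) x(m)$ for some $a \in k^F$. The ultrametric inequality gives $\v{\psi(x)} \leq C \n{x}$ with $C \coloneqq \max_{m \in F} \v{a(m)}$. If $C = 0$ then $\psi = 0$, which is impossible, so $C > 0$. Since the valuation of $k$ is non-trivial, there is a $c \in k^{\times}$ with $\v{c} \leq C^{-1}$.

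Put $\phi \coloneqq c \, \psi \circ T$. It is contracting as a composition of the contracting map $T$ with $c\psi$, whose operator norm is at most $\v{c} C \leq 1$. It vanishes on $W$ because $T(W) = U \subset \ker \psi$. Finally $\phi(v) = c \neq 0$.

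There is no real obstacle here. The only points needing care are that the functionals in $F$ are contracting, which makes $T$ contracting, and the rescaling step, which uses only the non-triviality of the valuation.
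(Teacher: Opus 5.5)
Your proposal is correct and follows essentially the same route as the paper. Both push $V$ forward contractingly to the finite-dimensional space $\ell^{\infty}(F,k)$ via the functionals in $F$, observe that the image of $v$ is not in the image of $W$, separate them by a contracting functional on $\ell^{\infty}(F,k)$, and compose. The only difference is that you build the separating functional explicitly (coordinates, then rescaling by some $c \in k^{\times}$ with $\v{c} \leq C^{-1}$), whereas the paper cites \cite{BGR84} Proposition 2.4.1/5 or elementary linear algebra for that step.
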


\begin{proof}
We denote by $\phi \colon V \to \ell^{\infty}(F,k)$ the contracting $k$-linear homomorphism associated to $F$. For any $v' \in W$, we have
\be
\n{\phi(v') - \phi(v)} = \n{\phi(v'-v)} = \sup_{m \in F} \v{m(v'-v)} > \epsilon.
\ee
This implies that $\phi(v) \notin \phi(W)$. Since $\ell^{\infty}(F,k) = \rC_0(F,k)$ is a finite dimensional Banach $k$-vector space admitting an orthonormal Schauder basis, there exists a contracting $k$-linear homomorphism $\phi' \colon \ell^{\infty}(F,k) \to k$ such that $\phi' |_{\phi(W)} = 0$ and $\phi'(\phi(v)) \neq 0$ (cf.\ \cite{BGR84} Proposition 2.4.1/5 or elementary linear algebra using $\Hom_{\leq 1}(\rC_0(F,k),\ell^{\infty}(F,k)) \cong \rM_{\# F}(O_k)$). We obtain $(\phi' \circ \phi)|_W = 0$ and $(\phi' \circ \phi)(v) \neq 0$.
\end{proof}

\begin{proof}[Proof of Proposition \ref{double perp}]
(1) We have $W \subset W^{\perp \perp}$ by the definition. It suffices to show $v \notin W^{\perp \perp}$ for any $v \in V \setminus W$. Since $W$ is weakly closed, there exists an $(F,\epsilon) \in \cP_{< \aleph_0}(V^{\vee}) \times \R_{> 0}$ such that $v' \notin W$ for any $v' \in V$ with $\sup_{m \in F} \v{m(v' - v)} \leq \epsilon$. Therefore we have $v \notin W^{\perp \perp}$ by Lemma \ref{separating homomorphism} applied to $(V,W,F,v)$.

\vs
(2) We have $L \subset L^{\perp \perp}$ by the definition. It suffices to show $m \notin L^{\perp \perp}$ for any $m \in M \setminus L$. Since $L$ is pure, we have $m \in kM \setminus (k \otimes_{O_k} L)$ by Proposition \ref{purity criterion} (1). Since $L$ is generically weakly closed, there exists an $(F,\epsilon) \in \cP_{< \aleph_0}((kM)^{\vee}) \times \R_{> 0}$ such that $v' \notin k \otimes_{O_k} L$ for any $v' \in kM$ with $\sup_{m \in F} \v{m(v' - v)} \leq \epsilon$. Therefore we have $m \notin L^{\perp \perp}$ by Proposition \ref{localisation operator norm} applied to $(M,k)$ and Lemma \ref{separating homomorphism} applied to $(kM,k \otimes_{O_k} L,F,m)$.
\end{proof}

\begin{crl}
\bi
\item[(1)] For any subset $W$ of a Banach $k$-vector space $V$, $W^{\perp \perp}$ is the weak closure in $V$ of the $k$-linear subspace generated by $W$.
\item[(2)] If $\v{k}$ is dense in $\R_{\geq 0}$, then for any subset $L$ of a saturated $O_k$-module $M$, $L^{\perp \perp}$ is the intersection of $M$ and the weak closure in $kM$ of the $k$-linear subspace generated by $L$.
\ei
\end{crl}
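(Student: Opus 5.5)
For (1) I take $W_0$ to be the $k$-linear span of $W$ and $\overline{W_0}$ its weak closure in $V$. The plan is to show $W^{\perp \perp} = \overline{W_0}$ and deduce it from Proposition \ref{double perp} (1). First I note $W^{\perp} = W_0^{\perp}$, since an element of $V^{\vee}$ vanishes on $W$ exactly when it vanishes on $W_0$. Next I show $W_0^{\perp} = \overline{W_0}^{\perp}$. The inclusion $\supset$ is trivial. For $\subset$, each $m \in V^{\vee}$ is weakly continuous by the definition of the weak topology, so $m^{-1}(0)$ is weakly closed; if it contains $W_0$ it therefore contains $\overline{W_0}$. I also need that the weak closure of a $k$-linear subspace is again a $k$-linear subspace. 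This holds because addition and scalar multiplication are weakly continuous: the weak topology is the initial topology for the $k$-linear maps $m \in V^{\vee}$, so $V$ is a topological vector space for it. Hence $\overline{W_0}$ is a weakly closed $k$-linear subspace, and Proposition \ref{double perp} (1) gives $W^{\perp \perp} = \overline{W_0}^{\perp \perp} = \overline{W_0}$.

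For (2) I assume $\v{k}$ dense, take $L \subset M$ with $M$ saturated, and let $W$ be the weak closure in $kM$ of the $k$-linear span of $L$. The goal is $L^{\perp \perp} = W \cap M$. By Proposition \ref{purity criterion} (2), $L' \coloneqq W \cap M$ is a pure $O_k$-submodule of $M$. Since $W$ is a $k$-linear subspace, $k \otimes_{O_k} L' = W$ holds: any $w \in W$ satisfies $cw \in M$ for some $c \in k^{\times}$, and then $cw \in W \cap M$. So $L'$ is generically weakly closed, and Proposition \ref{double perp} (2) yields $L' = L'^{\perp \perp}$. It then suffices to prove $L^{\perp} = L'^{\perp}$ in $M^{\vee}$.

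To prove that equality I transport the question to $kM$ using Proposition \ref{operator norm} and Proposition \ref{localisation operator norm}. An element $v \in M^{\vee}$ is bounded and $O_k$-linear, so it extends uniquely to $kv \in (kM)^*$, and after scaling by some $c \in k^{\times}$ we get $c \cdot kv \in (kM)^{\vee}$. If $v$ vanishes on $L$, then $kv$ vanishes on the span of $L$. Since $c \cdot kv$ is weakly continuous, it also vanishes on the weak closure $W$, and so $v$ vanishes on $L' \subset W$. This shows $L^{\perp} \subset L'^{\perp}$. The reverse inclusion follows from $L \subset L'$.

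I expect the main obstacle to be this identification step: every element of $M^{\vee}$, after scaling, must be a weakly continuous functional on $kM$. This requires the identification $(kM)^* \cong k \otimes (kM)^{\vee}$ and the norm compatibility provided by Proposition \ref{operator norm} together with Proposition \ref{saturated closed unit ball}, using $M = (kM)^{\circ}$. Apart from that, the argument for (2) mirrors the argument for (1).
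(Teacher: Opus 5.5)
Your proof is correct and follows the paper's route: pass from $W$ (resp.\ $L$) to the weak closure of its span (resp.\ that closure intersected with $M$, which is pure by Proposition \ref{purity criterion} (2)), observe that $\perp$ is unchanged by this passage, and apply Proposition \ref{double perp}. You also supply the details that the paper's one-line proof leaves implicit: the weak closure is a $k$-linear subspace, $W \cap M$ is generically weakly closed, and elements of $M^{\vee}$ extend to weakly continuous functionals on $kM$.
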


\begin{proof}
Since $\perp$ is anti-order-preserving, the assertion (1) follows from Proposition \ref{double perp} (1) and the assertion (2) follows from Proposition \ref{purity criterion} (2) and Proposition \ref{double perp} (2).
\end{proof}

For a Banach $k$-vector space $V$, we denote by $\WC(V)$ the set of weakly closed $k$-linear subspaces of $V$ directed by inclusion. For an $O_k$-module $M$, we denote by $\WC(M)$ the set of generically weakly closed pure $O_k$-submodules of $M$ directed by inclusion.

\begin{crl}
\label{perp equivalence}
Suppose that $\v{k}$ is dense in $\R_{\geq 0}$.
\bi
\item[(1)] For any Banach $k$-vector space $V$, the correspondences $\perp$ define anti-order-preserving maps between $\WC(V)$ and $\WC(V^{\vee})$ which are inverses to each other.
\item[(2)] For any saturated $O_k$-module $M$, the correspondences $\perp$ define anti-order-preserving maps between $\WC(M)$ and $\WC(M^{\vee})$ which are inverses to each other.
\ei
\end{crl}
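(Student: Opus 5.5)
The plan is to assemble Corollary \ref{perp equivalence} from Proposition \ref{topology of perp} and Proposition \ref{double perp}. Throughout, I use that $\perp$ is anti-order-preserving, since enlarging a subset only adds vanishing conditions.

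For (1), let $V$ be a Banach $k$-vector space. For $W \in \WC(V)$, Proposition \ref{topology of perp} (1) shows that $W^{\perp}$ is a generically weakly closed pure $O_k$-submodule of $V^{\vee}$, so $W^{\perp} \in \WC(V^{\vee})$. Here one needs $V^{\vee}$ to be generically complete, which Corollary \ref{dual norm} supplies since $\v{k}$ is dense; indeed $V^{\vee}$ is saturated.

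Conversely, for $L \in \WC(V^{\vee})$, Proposition \ref{topology of perp} (2) applied to the $O_k$-module $M = V^{\vee}$ makes $L^{\perp}$ a weakly closed $k$-linear subspace of $(V^{\vee})^{\vee}$. To land in $\WC(V)$, I identify $(V^{\vee})^{\vee}$ with $V$. By Proposition \ref{operator norm} applied to $V^*$, we have $(V^{\vee})^{\vee} = ((V^*)^{\circ})^{\vee} \cong V^{**}$, and this identification is isometric.

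The main obstacle is that $V^{**}$ need not equal $V$. The cleaner route is to avoid that identification and interpret $L^{\perp}$ for $L \subset V^{\vee}$ as $\set{v \in V}{\forall m \in L[m(v) = 0]}$, the annihilator in $V$ itself. This is exactly how $W^{\perp\perp}$ is read in Proposition \ref{double perp} (1). With this reading, $L^{\perp}$ is an intersection of kernels of weakly continuous maps, so it is a weakly closed $k$-linear subspace of $V$ directly.

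The two composites are then handled as follows.
\begin{itemize}
\item For $W \in \WC(V)$, Proposition \ref{double perp} (1) gives $W^{\perp\perp} = W$.
\item For $L \in \WC(V^{\vee})$, apply Proposition \ref{double perp} (2) to the saturated module $M = V^{\vee}$. The subspace $kM = V^*$ is weakly closed with respect to evaluations at elements of $V$, because the separating homomorphism in Lemma \ref{separating homomorphism} can be taken to be evaluation. Once the double annihilator computed in $(V^{\vee})^{\vee}$ is compared with the one computed through $V$, this gives $L^{\perp\perp} = L$.
\end{itemize}
I expect this comparison of the weak topology of $V^*$ coming from $V^{**}$ versus from $V$ to be the delicate point. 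I would first try to check it directly from the proof of Proposition \ref{double perp} (2). Failing that, I would simply adopt the convention that weak topologies on duals are those induced by evaluation.

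For (2), let $M$ be saturated. Proposition \ref{topology of perp} (2) sends $L \in \WC(M)$ to a closed, weakly closed $k$-linear subspace of the Banach space $M^{\vee}$, so $L^{\perp} \in \WC(M^{\vee})$.

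In the other direction, for $W \in \WC(M^{\vee})$, the annihilator is $W^{\perp} = \set{m \in M}{\forall v \in W[v(m) = 0]}$. This is an $O_k$-submodule of $M$, and it equals $W' \cap M$ for the $k$-subspace $W'$ of $kM$ cut out by the scalar extensions of the $v \in W$. By Proposition \ref{purity criterion} (2), it is pure. It is generically weakly closed by the same computation as in the proof of Proposition \ref{topology of perp} (1), using Proposition \ref{localisation operator norm} to extend each $v$ contractingly to $kM$.

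Finally, $L^{\perp\perp} = L$ is Proposition \ref{double perp} (2), and $W^{\perp\perp} = W$ is Proposition \ref{double perp} (1) applied to the Banach space $M^{\vee}$, with weak closedness read via evaluations at $M$.
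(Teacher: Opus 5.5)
Your skeleton matches the paper's: Proposition \ref{topology of perp} with Corollary \ref{dual norm} for well-definedness, and Proposition \ref{double perp} (1), (2) for the composites. The gap is exactly at the comparison you flag, and the bridge you propose is false. Lemma \ref{separating homomorphism}, applied inside $k(V^{\vee}) \cong V^{*}$, produces an element of $(V^{*})^{\vee}$, i.e.\ of the unit ball of $V^{**}$. Nothing makes it an evaluation at a point of $V$. Generic weak closedness of $L$ refers to the weak topology of $V^{*}$, whereas annihilating in $V$ and then back computes the closure for evaluations at points of $V$, and these really differ.

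Here is an example. Take $k$ spherically complete with $\v{k}$ dense (e.g.\ the spherical completion of $\Cp$) and $V = \rC_0(\N,k)$, so that $V^{*} = \ell^{\infty}(\N,k)$ and $V^{\vee} = O_k^{\N}$. Let $L = \rC_0(\N,k) \cap O_k^{\N}$. Then $L$ is pure by Proposition \ref{purity criterion} (2), and $k \otimes_{O_k} L = \rC_0(\N,k)$ is weakly closed in $\ell^{\infty}(\N,k)$ by Hahn--Banach, so $L \in \WC(V^{\vee})$. But $L$ contains every coordinate functional $\delta_i$, so $W^{\perp} = L$ would force $W = 0$, while $0^{\perp} = O_k^{\N} \neq L$. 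Thus $W \mapsto W^{\perp}$ is not even onto $\WC(V^{\vee})$, however the reverse map is read. Part (2) fails the same way for $M = \rC_0(\N,k)^{\circ}$ and $W = \rC_0(\N,k) \subset M^{\vee} \cong \ell^{\infty}(\N,k)$. So checking the comparison from the proof of Proposition \ref{double perp} (2) cannot succeed. Declaring weak topologies on duals to be evaluation-induced changes $\WC(V^{\vee})$ and $\WC(M^{\vee})$, i.e.\ proves a different statement.

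What is missing is reflexivity. The canonical maps $V \to V^{\vee\vee}$ and $M \to M^{\vee\vee}$ must be isomorphisms, so that annihilators taken in $(V^{\vee})^{\vee}$ (resp.\ $(M^{\vee})^{\vee}$) agree with those taken in $V$ (resp.\ $M$). Theorem \ref{Schikhof} (1) and (2) supply this when $k$ is non-spherically complete, $V$ is free and $M$ is cofree. That is exactly the situation in which the corollary is used (Lemma \ref{equivariant perp equivalence}). Under that hypothesis your two composites are literally Proposition \ref{double perp} (1) and (2), and no comparison of weak topologies arises. The paper's one-line proof is the same assembly as yours and leaves the identification of $V$ with $V^{\vee\vee}$ tacit. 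So the corollary has to be read with this extra hypothesis, not under density of $\v{k}$ alone.
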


\begin{proof}
The assertion (1) follows from the first assertion of Proposition \ref{dual norm} and Proposition \ref{double perp}, and the assertion (2) follows from Proposition \ref{double perp}.
\end{proof}

Through Corollary \ref{perp equivalence}, we interpret the topological notion of the weak closedness in a Banach $k$-vector space into the purely algebraic notion of the weak closedness in an $O_k$-module. We note that we can forget the norm of a Banach $k$-vector space by simply taking the closed unit ball rather than the dual, but the use of the dual is helpful because the weak topology of the dual is easy to handle when we consider representations of profinite groups. See \S \ref{Example} for actual computations.

\section{Schikhof Duality}
\label{Schikhof Duality}

We study the dual in \S \ref{Dual Space} for the case where $k$ is non-spherically complete to obtain a categorical equivalence between wide classes of Banach $k$-vector spaces and $O_k$-modules.

\subsection{Categorical Equivalence}
\label{Categorical Equivalence 1}

A filter $\cF$ is said to be {\it $\lambda$-complete} for a cardinal $\lambda$ if $\bigcap_{U \in S} U \in \cF$ holds for any subset $S \subset \cF$ with $0 < \# S < \lambda$. A cardinal $\kappa$ is said to be {\it $\lambda$-measurable} for a cardinal $\lambda$ if $\kappa \geq \aleph_1$ holds and $\kappa$ admits a $\lambda$-complete non-principal ultrafilter, and is said to be {\it measurable} if $\kappa$ is $\kappa$-measurable.

\vs
We note that a cardinal $\kappa$ is $\aleph_1$-measurable if and only if there exists a measurable cardinal not greater than $\kappa$. The existence of a measurable cardinal is unprovable under $\ZFC$ a long as $\ZFC$ is consistent, and we work in $\ZFC$ without the assumption of the existence of a measurable cardinal.

\begin{dfn}
A set is said to be {\it non-measurable} if its cardinality is not $\aleph_1$-measurable. A Banach $k$-vector space $V$ is said to be {\it free} if there exists an isomeric $k$-linear isomorphism $\rC_0(I,k) \to V$ for some non-measurable set $I$. An $O_k$-module $M$ is said to be {\it cofree} if there exists an $O_k$-linear isomorphism $M \to O_k^I$ for some non-measurable set $I$.
\end{dfn}

By Corollary \ref{cofree implies saturated}, every cofree $O_k$-module is saturated. We implicitly use this property when we apply results on a saturate $O_k$-module in \S \ref{Dual Space}. We will give non-trivial examples of cofree $O_k$-modules in Proposition \ref{universality of measure space} (1) and Proposition \ref{weak topology vs inverse limit topology}. We give an algebraic variant of Schikhof duality (cf.\ \cite{Sch95} Theorem 4.6) as a non-spherically complete analogue of \cite{ST02} Theorem 1.2 and \cite{Mih21-2} Theorem 2.2.

\begin{thm}[Schikhof duality for free Banach spaces and cofree modules]
\label{Schikhof}
\bi
\item[(1)] For any free Banach $k$-vector space $V$, $V^{\vee}$ is cofree, and in addition if $k$ is non-spherically complete, then the canonical $k$-linear homomorphism $V \to V^{\vee \vee}$ is an isometric isomorphism.
\item[(2)] If $k$ is non-spherically complete, then for any cofree $O_k$-module $M$, $M^{\vee}$ is free, and the canonical $O_k$-linear homomorphism $M \to M^{\vee \vee}$ is an isomorphism.
\ei
\end{thm}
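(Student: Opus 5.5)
Both parts reduce, via the isomorphisms in the definitions of free and cofree, to the model objects $V = \rC_0(I,k)$ and $M = O_k^I$ with $I$ non-measurable. The canonical maps $V \to V^{\vee\vee}$ and $M \to M^{\vee\vee}$ are natural, so it suffices to treat these models.

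For (1), first identify $\rC_0(I,k)^* \cong \ell^{\infty}(I,k)$ isometrically, sending $\phi$ to $(\phi(\delta_i))_{i \in I}$. The inverse sends $a$ to the functional $f \mapsto \sum_{i} a(i) f(i)$. This is standard, since the $\delta_i$ form an orthonormal basis and finitely supported vectors are dense. Taking unit balls gives $V^{\vee} \cong O_k^I$, which is cofree; this needs no hypothesis on $k$. The second assertion of (1) then follows from (2) applied to $M = V^{\vee}$. Concretely, we must compute $(O_k^I)^{\vee}$, and the composite $V \to V^{\vee\vee}$ is the map $\rC_0(I,k) \to (O_k^I)^{\vee}$ sending $f$ to $a \mapsto \sum_i a(i) f(i)$. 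It therefore suffices to show this map is an isometric isomorphism.

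For (2), set $M = O_k^I$ and consider $\Phi \colon \rC_0(I,k) \to M^{\vee}$, $f \mapsto (a \mapsto \sum_i a(i) f(i))$. This is well defined and contracting. It is isometric, since evaluating at $a = \delta_i$ recovers $f(i)$, so $\n{\Phi(f)} \geq \sup_i \v{f(i)}$. The substance is surjectivity: every $O_k$-linear $\psi \colon O_k^I \to k$ with bounded image must be of this form. I would argue in two steps.

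\textbf{Step (a).} Show that $f(i) \coloneqq \psi(\delta_i)$ lies in $\rC_0(I,k)$. If not, there are $\epsilon > 0$ and infinitely many $i_n$ with $\v{f(i_n)} \geq \epsilon$. Using non-spherical completeness, choose a decreasing sequence of closed balls $B(c_n, r_n)$ with empty intersection and $r_n \searrow r > 0$. From these, build $a = \sum_n b_n \delta_{i_n} \in O_k^I$ whose coefficients $b_n$ are chosen inductively so that $\psi(a)$ would have to lie in every ball. This gives a contradiction. The key tool is that the partial sums control $\psi(a)$ modulo $\psi$ applied to tails, and tails lie in $t\, O_k^I$ for small $t$, hence have $\psi$-value of absolute value at most $\v{t}\,\n{\psi}$.

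\textbf{Step (b).} Show that $\psi' \coloneqq \psi - \Phi(f)$ vanishes. It kills every $\delta_i$ and all finitely supported vectors. By a Specker/\L{}o\'s-type argument it must vanish on all of $O_k^I$. This splits into a countable case and a non-measurable case.

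In the countable case, $\psi'$ should vanish on $O_k^{\N}$. Suppose $\psi'(a) \neq 0$. Again use a nested ball sequence with empty intersection to choose scalars $c_n \to 0$ slowly and form $\sum_n c_n a(n)\delta_n$-type combinations, making $\psi'$ of some element impossible to define. This is the non-Archimedean analogue of slenderness of $O_k$; it is the place where non-spherical completeness is essential, since for spherically complete $k$ ultrafilter-limit functionals exist.

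For general non-measurable $I$, pass from the countable case to $I$ in the style of \L{}o\'s. Consider the family of subsets $J \subset I$ such that $\psi'$ vanishes on $O_k^J$, viewed inside $O_k^I$ by extension by zero. Slenderness shows that the complementary family is an $\aleph_1$-complete ultrafilter-like set. Non-measurability then forces it to be principal, and since $\psi'(\delta_i) = 0$ for every $i$, it must be empty.

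Step (b) is the main obstacle, above all the countable slenderness statement, where the nested balls must be carefully converted into an element of $O_k^{\N}$. Finally, the canonical map $M \to M^{\vee\vee}$ becomes, via $\Phi$, the map $O_k^I \to \rC_0(I,k)^{\vee} \cong O_k^I$ from (1), which is the identity. This gives (2), and (1) follows as described above.
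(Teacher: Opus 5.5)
Your skeleton is sound and matches the paper's. You reduce to $V=\rC_0(I,k)$ and $M=O_k^I$. You identify $\rC_0(I,k)^{\vee}\cong O_k^I$, which holds for any $k$ and any $I$. You then note that everything else follows once $\Phi\colon\rC_0(I,k)\to(O_k^I)^{\vee}$ is an isometric isomorphism. That last fact is the non-Archimedean {\L}o\'s theorem, and the paper does not prove it either: it quotes it from \cite{MN89} \S 12 Corollary 7.20. Your substitute proof of it has a genuine gap.

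In Step (a), write the balls as $B(z_n,r_n)$ with $r_n\searrow r>0$, scaled so that $r_0\leq\epsilon$. The partial sums force $b_m\psi(\delta_{i_m})=z_{m+1}-z_m$, and these increments have absolute value $\geq r$ infinitely often (otherwise some $z_N$ lies in every ball). So the $n$-th tail has sup norm up to $r_n/\epsilon$, and your estimate only gives $z_0+\psi(a)\in B(z_n,Cr_n)$ with $C=\n{\psi}/\epsilon\geq 1$. For a fixed $C>1$ this is no contradiction: if $r_N\leq Cr$, then $z_N$ lies in every $B(z_n,Cr_n)$. The enlargement factors would have to tend to $1$. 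Choosing the $i_n$ cannot arrange that in general, because the norm of $\psi$ on the remaining coordinates need not be approached by the values $\v{\psi(\delta_i)}$; the excess is exactly a functional of the type Step (b) is supposed to kill.

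Step (b) does not supply this, for three reasons.

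\begin{itemize}
\item[(i)] Any element of $O_k^{\N}$ with entries tending to $0$ lies in the sup-norm closure of the finitely supported vectors, where $\psi'$ vanishes by boundedness. So combinations $\sum_n c_n a(n)\delta_n$ with $c_n\to 0$ carry no information, and the countable case is not actually argued.
\item[(ii)] The {\L}o\'s step is misdescribed. The family of $J$ with $\psi'|_{O_k^J}\neq 0$ is not closed under intersections, since both $J$ and $I\setminus J$ may belong to it. You need the norm-attaining family $\mathcal{D}=\set{J}{\sup_{a\in O_k^J}\v{\psi'(a)}=\n{\psi'}}$. You then need a splitting argument, which uses the countable case, to find $J_0\in\mathcal{D}$ on which $\mathcal{D}$ restricts to an $\aleph_1$-complete non-principal ultrafilter.
\item[(iii)] The simplest repair is to cite the reflexivity theorem, as the paper does.
\end{itemize}

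If you want a self-contained proof, the standard route for the countable case of (b) is different from yours. The space $\ell^{\infty}(\N,k)/\rC_0(\N,k)$ is spherically complete by a diagonal argument, and hence so is each of its quotients by closed subspaces. A non-zero continuous functional vanishing on $\rC_0(\N,k)$ would then exhibit $k$, with a rescaled norm, as such a quotient, contradicting non-spherical completeness. Step (a) then follows from (b) without balls. Normalise to a bounded $\chi$ on $O_k^{\N}$ with $\chi(\delta_n)=\varpi\neq 0$ for all $n$, and apply (b) to $\chi-\chi\circ R$, where $R$ is the right shift. This gives $\chi=\chi\circ R$, and then $\varpi=\chi(\delta_0)=\chi(\mathbf{1})-\chi(R\mathbf{1})=0$, a contradiction.
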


\begin{proof}
(1) For any (possibly measurable) set $I$, the canonical pairing $O_k^I \times \rC_0(I,k) \to k$ induces an $O_k$-linear isomorphism $O_k^I \to \rC_0(I,k)^{\vee}$ by \cite{MN89} \S 12 Corollary 7.13. This implies the first assertion. By Proposition \ref{operator norm} applied to $V^*$, the restriction map $V^{**} \to V^{\vee \vee}$ is an isometric $k$-linear isomorphism. Therefore the second assertion follows from \cite{MN89} \S 12 Corollary 7.20.

\vs
(2) Take an $O_k$-linear isomorphism $M \to O_k^I$ with a non-measurable set $I$. Since $O_k^I$ is isomorphic to $\rC_0(I,k)^{\vee}$, we may assume $M = \rC_0(I,k)^{\vee}$. We have $M^{\vee} \cong \rC_0(I,k)^{\vee \vee} \cong \rC_0(k,I)$ by (1), and hence $M^{\vee}$ is free. The canonical $O_k$-linear homomorphism $M \to M^{\vee \vee}$ is associated to the canonical pairing $M \times M^{\vee} \to k$, which is compatible with the canonical pairing $M \times \rC_0(I,k) = \rC_0(I,k)^{\vee} \times \rC_0(I,k) \to k$ and the canonical $k$-linear homomorphism $\rC_0(I,k) \to \rC_0(I,k)^{\vee \vee}$, and hence is an isomorphism by (1).
\end{proof}

\begin{crl}
\label{adjoint property}
Suppose that $k$ is non-spherically complete.
\bi
\item[(1)] For any free Banach $k$-vector spaces $V_0$ and $V_1$, the map
\be
\Hom_{\leq 1}(V_0,V_1) \to \Hom_{O_k}(V_1^{\vee},V_0^{\vee})
\ee
assigning to each contracting $k$-linear homomorphism $\phi \colon V_0 \to V_1$ the $O_k$-linear homomorphism
\be
\phi^{\vee} \colon V_1^{\vee} & \to & V_0^{\vee} \\
m & \mapsto & m \circ \phi
\ee
is an $O_k$-linear isomorphism.
\item[(2)] For any cofree $O_k$-modules $M_0$ and $M_1$, the map
\be
\Hom_{O_k}(M_0,M_1) \to \Hom_{O_k}(M_1^{\vee},M_0^{\vee})
\ee
assigning to each $O_k$-linear homomorphism $\psi \colon M_0 \to M_1$ the $O_k$-linear homomorphism
\be
\psi^{\vee} \colon M_1^{\vee} & \to & M_0^{\vee} \\
v & \mapsto & v \circ \phi
\ee
is an $O_k$-linear isomorphism onto $\Hom_{\leq 1}(M_1^{\vee},M_0^{\vee})$.
\ei
\end{crl}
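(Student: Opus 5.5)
For (1), I would build an inverse to $\phi \mapsto \phi^{\vee}$ by applying the dual construction twice and then using the biduality isomorphism of Theorem \ref{Schikhof} (1). First I check that the map is well defined. If $\phi$ is contracting and $m \in V_1^{\vee}$, then $m \circ \phi$ is contracting, so $\phi^{\vee}$ lands in $V_0^{\vee}$ and is clearly $O_k$-linear. The assignment $\phi \mapsto \phi^{\vee}$ is also $O_k$-linear.

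For injectivity, suppose $\phi^{\vee} = 0$. Then $m(\phi(v)) = 0$ for every $m \in V_1^{\vee}$ and every $v \in V_0$. By Theorem \ref{Schikhof} (1), the canonical map $V_1 \to V_1^{\vee\vee}$ is injective, so $V_1^{\vee}$ separates points of $V_1$. Hence $\phi = 0$.

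For surjectivity, let $\psi \colon V_1^{\vee} \to V_0^{\vee}$ be $O_k$-linear. By Theorem \ref{Schikhof} (1), $V_i^{\vee}$ is cofree and in particular saturated by Corollary \ref{cofree implies saturated}. Its Banach localisation is $V_i^*$ by Corollary \ref{dual norm}. Proposition \ref{localisation operator norm} then extends $\psi$ uniquely to a contracting $k$-linear map $k\psi \colon V_1^* \to V_0^*$. Precomposition with $k\psi$ gives a contracting map $\Psi \colon V_0^{**} \to V_1^{**}$, that is, $\Psi(\xi) = \xi \circ k\psi$. Let $\iota_i \colon V_i \to V_i^{**}$ be the canonical maps, which are isometric isomorphisms by Theorem \ref{Schikhof} (1) together with Proposition \ref{operator norm}. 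Define
\[
\phi \coloneqq \iota_1^{-1} \circ \Psi \circ \iota_0 .
\]
This $\phi$ is contracting. Unwinding the definitions, for $m \in V_1^{\vee}$ and $v \in V_0$ we get
\[
m(\phi(v)) = \iota_1(\phi(v))(m) = \Psi(\iota_0(v))(m) = \iota_0(v)(\psi(m)) = \psi(m)(v),
\]
so $\phi^{\vee} = \psi$.

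For (2), I would reduce to (1). By Theorem \ref{Schikhof} (2), $M_i^{\vee}$ is free and $M_i \cong M_i^{\vee\vee}$ canonically, and by Theorem \ref{Schikhof} (1) each $M_i$ is identified with $(M_i^{\vee})^{\vee}$. The only preliminary point is that $\psi^{\vee} = v \mapsto v \circ \psi$ really is a contracting $k$-linear map $M_1^{\vee} \to M_0^{\vee}$. Its $k$-linearity is evident. It is contracting because $\sup_{m \in M_0} \v{v(\psi(m))} \leq \sup_{m' \in M_1} \v{v(m')}$, since $\psi(M_0) \subset M_1$.

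With that in place, I apply (1) to the free spaces $V_0 = M_1^{\vee}$ and $V_1 = M_0^{\vee}$. This gives an isomorphism
\[
\Hom_{\leq 1}(M_1^{\vee},M_0^{\vee}) \to \Hom_{O_k}(M_0^{\vee\vee},M_1^{\vee\vee}) \cong \Hom_{O_k}(M_0,M_1).
\]
I then check that composing the assignment $\psi \mapsto \psi^{\vee}$ with this isomorphism gives the identity, again via the canonical maps $\iota$ and pairings, as in (1).

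The main obstacle is bookkeeping with the identifications. One must be sure that $\Hom_{\leq 1}$ of the Banach localisations matches $\Hom_{O_k}$ of the unit balls, which is Proposition \ref{localisation operator norm} used with saturatedness. One must also be sure that the biduality isomorphisms are natural with respect to the maps constructed. I expect the naturality check, in particular the identity $m(\phi(v)) = \psi(m)(v)$ displayed above, to be where care is needed. Everything else is formal.
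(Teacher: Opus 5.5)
Your proof is correct. It rests on the same ingredients as the paper's: the biduality isomorphisms of Theorem \ref{Schikhof} and the naturality of the evaluation maps, which make the double transpose conjugate to the identity. The organisation differs, however.

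The paper proves (1) and (2) together. It uses the principle that $\iota$ is bijective once $\iota'\circ\iota$ is bijective for some injective $\iota'$. It composes the map of (1) with the map of (2) applied to $(V_1^{\vee},V_0^{\vee})$. It also composes the map of (2) with the map of (1) applied to $(M_1^{\vee},M_0^{\vee})$. Both composites are conjugate to the identity. So each part leans on the other part's map, and the injectivity of $\iota'$ required in each case is only implicitly supplied by the other composite.

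You instead prove (1) on its own. Injectivity comes from point separation, and surjectivity from the explicit inverse $\psi \mapsto \iota_1^{-1}\circ\Psi\circ\iota_0$. You then derive (2) from the bijectivity already established in (1), by checking that the map of (2), followed by the map of (1) and the biduality conjugation, is the identity. This gives a linear, more transparent logical dependence and makes the naturality check explicit.

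One remark: your passage through $k\psi\colon V_1^*\to V_0^*$ and $V^{**}$, using Proposition \ref{localisation operator norm} and saturatedness, is valid but not needed. You can define the transpose directly as $(V_0^{\vee})^{\vee}\to(V_1^{\vee})^{\vee}$, $\eta\mapsto\eta\circ\psi$. This map is contracting by the same supremum estimate you use in (2), and it is effectively what the paper's map of (2) does. Staying with the $O_k$-module biduals $V^{\vee\vee}$ in this way avoids the extra appeal to the extension property of saturated modules.
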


\begin{proof}
Injectivity (resp.\ Bijectivity) of a map $\iota$ is equivalent to injectivity (resp.\ bijectivity) of $\iota' \circ \iota$ for some map (resp.\ some injective map) $\iota'$ for which $\iota' \circ \iota$ makes sense. Therefore, it suffices to show the following:
\bi
\item[(1)] Bijectivity of the composite of the map in (1) followed by the map in (2) applied to $(V_1^{\vee},V_0^{\vee})$.
\item[(2)] Bijectivity of the composite of the map in (2) followed by the map in (1) applied to $(M_1^{\vee},M_0^{\vee})$.
\ei
(1) The composite of the given map followed by the map in (2) applied to $(V_1^{\vee},V_0^{\vee})$ is the map
\be
\Hom_{\leq 1}(V_0,V_1) \to \Hom_{O_k}(V_0^{\vee \vee},V_1^{\vee \vee})
\ee
compatible with the isomorphisms $V_0 \to V_0^{\vee \vee}$ and $V_1 \to V_1^{\vee \vee}$, and hence is an $O_k$-linear isomorphism onto $\Hom_{\leq 1}(V_1^{\vee \vee},V_0^{\vee \vee})$ by Theorem \ref{Schikhof} (1).

\vs
(2) Let $\psi$ be an $O_k$-linear homomorphism $M_0 \to M_1$. For any $v \in M_1^{\vee}$, we have
\be
\n{v \circ \psi} = \sup_{m \in M_0} \v{(v \circ \psi)(m)} = \sup_{m \in M_0} \v{v(\psi(m))} \leq \sup_{m \in M_1} \v{v(m)} = \n{v}.
\ee
This implies $\psi^{\vee} \in \Hom_{\leq 1}(M_1^{\vee},M_0^{\vee})$. The composite of the given map followed by the map in (1) applied to $(M_1^{\vee},M_0^{\vee})$ is the map
\be
\Hom_{O_k}(M_0,M_1) \to \Hom_{O_k}(M_0^{\vee \vee},M_1^{\vee \vee})
\ee
compatible with the isomorphisms $M_0 \to M_0^{\vee \vee}$ and $M_1 \to M_1^{\vee \vee}$, and hence is an $O_k$-linear isomorphism by Theorem \ref{Schikhof} (2).
\end{proof}

\begin{crl}
Suppose that $k$ is non-spherically complete. Then the correspondences $\vee$ define a $\Mod(O_k)$-enriched contravariant equivalence between the $\Mod(O_k)$-enriched category of free Banach $k$-vector spaces and contracting $k$-linear homomorphisms and the $\Mod(O_k)$-enriched category of cofree $O_k$-modules and $O_k$-linear homomorphisms.
\end{crl}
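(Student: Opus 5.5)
The plan is to check the definition of an enriched contravariant equivalence directly. This needs three things: the functors are well defined on objects, they are well defined and $O_k$-linear on hom-modules, and there are natural isomorphisms to the identities. All of these are supplied by Theorem \ref{Schikhof} and Corollary \ref{adjoint property}.

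\emph{Objects.} By Theorem \ref{Schikhof} (1), the assignment $V \mapsto V^{\vee}$ sends a free Banach $k$-vector space to a cofree $O_k$-module. By Theorem \ref{Schikhof} (2), the assignment $M \mapsto M^{\vee}$ sends a cofree $O_k$-module to a free Banach $k$-vector space. The latter uses that $k$ is non-spherically complete.

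\emph{Morphisms and functoriality.} For a contracting $\phi \colon V_0 \to V_1$ we set $\phi^{\vee}(m) = m \circ \phi$, and for an $O_k$-linear $\psi \colon M_0 \to M_1$ we set $\psi^{\vee}(v) = v \circ \psi$.
\bi
\item[(i)] Both assignments are visibly $O_k$-linear in the morphism.
\item[(ii)] Both reverse composition, since $(\phi_1 \circ \phi_0)^{\vee} = \phi_0^{\vee} \circ \phi_1^{\vee}$, and both preserve identities.
\item[(iii)] $\psi^{\vee}$ is contracting, by the estimate in the proof of Corollary \ref{adjoint property} (2).
\ei
So we obtain $\Mod(O_k)$-enriched contravariant functors in both directions. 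Corollary \ref{adjoint property} (1) and (2) then say precisely that these functors are fully faithful in the enriched sense: the induced maps on hom-$O_k$-modules are isomorphisms, with the contracting hom-modules on the Banach side.

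\emph{Natural isomorphisms.} The canonical maps $\eta_V \colon V \to V^{\vee\vee}$ and $\eta_M \colon M \to M^{\vee\vee}$ are given by evaluation. By Theorem \ref{Schikhof}, $\eta_V$ is an isometric isomorphism, hence an isomorphism in the category with contracting morphisms, and $\eta_M$ is an $O_k$-linear isomorphism. Naturality is a routine check. For $\phi \colon V_0 \to V_1$ and $v \in V_0$, the element $\phi^{\vee\vee}(\eta_{V_0}(v))$ sends $m \in V_1^{\vee}$ to $\eta_{V_0}(v)(m \circ \phi) = m(\phi(v)) = \eta_{V_1}(\phi(v))(m)$. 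The module case is identical.

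Since full faithfulness already comes from Corollary \ref{adjoint property}, essential surjectivity is immediate from the isomorphisms $M \cong (M^{\vee})^{\vee}$, so an explicit quasi-inverse is not even needed. The only point requiring care is bookkeeping. On the Banach side the isomorphisms must be isometric, so that they are invertible by contracting maps, and this is exactly why Theorem \ref{Schikhof} (1) asserts isometry. I expect no real obstacle beyond this.
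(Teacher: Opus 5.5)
Your proposal is correct and follows the paper's route. The paper's proof only cites Theorem \ref{Schikhof} and Corollary \ref{adjoint property}, and you unpack exactly those two inputs into well-definedness on objects, enriched full faithfulness, and the evaluation isomorphisms $V \cong V^{\vee\vee}$ and $M \cong M^{\vee\vee}$, correctly noting that isometry is what makes the Banach-side isomorphism invertible by a contracting map.
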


\begin{proof}
The assertion immediately follows from Theorem \ref{Schikhof} and Corollary \ref{adjoint property}.
\end{proof}

\subsection{Automatic Continuity Theorem}
\label{Automatic Continuity Theorem}

Let $M$ be an $O_k$-module. A topology $\tau$ of $M$ is said to be {\it linear} if $M$ forms a topological $O_k$-module with respect to $\tau$ and $\tau$ admits a fundamental system of neighbourhoods of $0 \in M$ consisting of $O_k$-submodules of $M$. For a set $S$ of $O_k$-submodules of $M$, {\it the linear topology of $M$ generated by $S$} is the weakest linear topology of $M$ for which every $L \in S$ is open. We characterise the weak topology of a cofree $O_k$-module.

\begin{prp}
\label{characterisation of weak topology}
Suppose that $k$ is non-spherically complete.
\bi
\item[(1)] For any non-measurable set $I$, the weak topology of $O_k^I$ coincides with the direct product topology.
\item[(2)] The weak topology of $O_k^{\N}$ coincides with the linear topology generated by the set $\set{\set{m \in O_k^{\N}}{\sup_{i \in \N} r^i m(i)} \leq \epsilon}{(r,\epsilon) \in (0,1)^2}$ of $O_k$-submodules.
\ei
\end{prp}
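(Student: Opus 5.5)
Part (1) reduces to identifying $(O_k^I)^{\vee}$ with $\rC_0(I,k)$ through the canonical pairing. Part (2) then follows from (1) by comparing neighbourhood bases.

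For (1), the plan is as follows. By the proof of Theorem \ref{Schikhof} (1), the canonical pairing identifies $O_k^I$ with $\rC_0(I,k)^{\vee}$. Hence the canonical homomorphism $\rC_0(I,k) \to \rC_0(I,k)^{\vee \vee}$ is an isometric isomorphism onto $(O_k^I)^{\vee}$ by Theorem \ref{Schikhof} (1); this uses that $k$ is non-spherically complete and $I$ is non-measurable. Concretely, every $v \in (O_k^I)^{\vee}$ is of the form $m \mapsto \sum_{i \in I} f(i) m(i)$ for a unique $f \in \rC_0(I,k)$. I expect this identification, which is exactly where non-measurability and non-spherical completeness enter, to be the main point; once it is in place, the rest is elementary.

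The weak topology is therefore the weakest topology making all maps $m \mapsto \sum_{i} f(i) m(i)$ continuous, for $f \in \rC_0(I,k)$.
\bi
\item[(a)] Taking $f = \delta_i$ shows that every projection $O_k^I \to O_k$ is weakly continuous. So the weak topology is finer than the direct product topology.
\item[(b)] Conversely, fix $f \in \rC_0(I,k)$ and $\epsilon > 0$, and choose $I_0 \in \cP_{< \aleph_0}(I)$ with $\v{f(i)} < \epsilon$ for $i \notin I_0$. Suppose $m, m' \in O_k^I$ satisfy $\v{m(i) - m'(i)} \leq \epsilon (\max_{j \in I_0} \v{f(j)} + 1)^{-1}$ for all $i \in I_0$. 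The ultrametric inequality, together with $\v{m(i) - m'(i)} \leq 1$ for all $i$, gives $\v{\sum_i f(i)(m(i) - m'(i))} \leq \epsilon$.
\ei
So each weakly continuous functional is continuous for the product topology, and the two topologies coincide.

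For (2), apply (1) with $I = \N$, which is non-measurable, so it suffices to show that the linear topology generated by the submodules $U_{r,\epsilon} \coloneqq \set{m \in O_k^{\N}}{\sup_{i \in \N} r^i \v{m(i)} \leq \epsilon}$ equals the product topology. Both topologies are linear, so it is enough to compare neighbourhoods of $0$ that are submodules.
\bi
\item[(c)] Given $(r,\epsilon) \in (0,1)^2$, choose $N$ with $r^N \leq \epsilon$. Then $U_{r,\epsilon}$ contains the product neighbourhood $\set{m}{\forall i < N[\v{m(i)} \leq \epsilon]}$, because for $i \geq N$ we have $r^i \v{m(i)} \leq r^N \leq \epsilon$. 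Being a submodule containing an open submodule, $U_{r,\epsilon}$ is open in the product topology.
\item[(d)] Conversely, a basic product neighbourhood $\set{m}{\forall i < N[\v{m(i)} \leq \epsilon]}$ contains $U_{r,\epsilon r^{N}}$ for any $r \in (0,1)$. Indeed, for $i < N$ we get $\v{m(i)} \leq \epsilon r^{N-i} \leq \epsilon$.
\ei
Hence the two linear topologies agree, and (2) follows from (1).
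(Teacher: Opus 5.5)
Your proposal is correct and follows essentially the same route as the paper. You identify $(O_k^I)^{\vee}$ with $\rC_0(I,k)$ via the canonical pairing: the paper cites \cite{MN89} \S 12 Corollary 7.20 directly, while you route it through Theorem \ref{Schikhof} (1). You then use the $\delta_i$ for one inclusion, the finiteness of the set of $i$ with $\v{f(i)} \geq \epsilon$ for the other, and for (2) compare the submodules $L_{r,\epsilon}$ with basic product neighbourhoods; the remaining differences (a direct estimate instead of the paper's truncation $f'$, and $U_{r,\epsilon r^N}$ instead of $L_{\epsilon,\epsilon^{i_0}}$) are cosmetic.
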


\begin{proof}
(1) Since $k$ is non-spherically complete, the canonical pairing $\rC_0(I,k) \times O_k^I \to k$ induces an isometric $k$-linear isomorphism $\rC_0(I,k) \to (O_k^I)^{\vee}$ by \cite{MN89} \S 12 Corollary 7.20, through which we identify $(O_k^I)^{\vee}$ with $\rC_0(I,k)$. For any $i \in I$, the evaluation $O_k^I \twoheadrightarrow O_k$ at $i$ is given by the evaluation at $\delta_i \in \rC_0(I,k) \cong (O_k^I)^{\vee}$. Therefore, every subset of $O_k^I$ open with respect to the direct product topology is weakly open.

\vs
Since both topologies are linear, it suffices to show that weakly open $O_k$-submodules in a subbase of a fundamental system of neighbourhoods of $0$ are open with respect to the direct product topology. Let $(f,\epsilon) \in (O_k^I)^{\vee} \times \R_{> 0}$, and set $L \coloneqq \set{m \in O_k^I}{\v{f(m)} \leq \epsilon}$. We denote by $f' \in \rC_0(I,k)$ the map
\be
I & \to & k \\
i & \mapsto &
\left\{
\begin{array}{ll}
f(i) & (\v{f(i)} > \epsilon) \\
0 & (\v{f(i)} \leq \epsilon)
\end{array}
\right..
\ee
Then we have
\be
L = \set{m \in O_k^I}{\v{\sum_{i \in I} f'(i) m(i)} \leq \epsilon}
\ee
by $\n{f - f'} \leq \epsilon$. Set $I_0 \coloneqq \set{i \in I}{f'(i) \neq 0}$. We have $\# I_0 = \# \set{i \in I}{\v{f(i)} \geq \epsilon} < \aleph_0$ by $f \in (O_k^I)^{\vee} \cong \rC_0(I,k)$. Therefore, we have
\be
\set{m \in O_K^I}{\forall i \in I_0[\v{f'(i) m(i)} \leq \epsilon]} \subset \set{m \in O_k^I}{\v{\sum_{i \in I} f'(i) m_i} \leq \epsilon} = L,
\ee
and hence $L$ is open with respect to the direct product topology.

\vs
(2) For an $(r,\epsilon) \in (0,1)^2$, we set
\be
L_{r,\epsilon} \coloneqq \set{m \in O_k^{\N}}{\sup_{i \in \N} r^i \v{m(i)} \leq \epsilon}.
\ee
We denote by $\tau$ the linear topology of $O_k^{\N}$ generated by the set $\set{L_{r,\epsilon}}{(r,\epsilon) \in (0,1)^2}$ of $O_k$-submodules. Since both topologies are linear, it suffices to compare $O_k$-submodules in subbases of fundamental systems of neighbourhoods of $0$.

\vs
First, let $(r,\epsilon) \in (0,1)^2$. By $0 < r < 1$, there exists an $i_0 \in \N$ such that $0 < r^{i_0} \leq \epsilon$. We have
\be
L_{r,\epsilon} = \set{m \in O_k^{\N}}{\forall i \in \N_{< i_0}[\v{m(i)} \leq r^{-i} \epsilon},
\ee
and hence $L$ is weakly open by (1).

\vs
Next, let $(i_0,\epsilon) \in \N \times (0,1)$ and set
\be
L \coloneqq \set{m \in O_k^I}{\forall i \in \N_{< i_0}[\v{m(i)} \leq \epsilon]}.
\ee
We note that open $O_k$-submodules of this type form a fundamental system of neighbourhoods of $0$ for the weak topology by (1). We have
\be
L_{\epsilon,\epsilon^{i_0}} = \set{m \in O_k^{\N}}{\sup_{i \in \N} \epsilon^i \v{m(i)} \leq \epsilon^{i_0}} \subset \set{m \in O_k^I}{\forall i \in \N_{< i_0}[\v{m(i)} \leq \epsilon]} = L,
\ee
and hence $L$ is open with respect to the $\tau$.
\end{proof}

Proposition \ref{characterisation of weak topology} (1) implies that the direct product topology of a cofree $O_k$-module does not depend on the choice of the topological basis, i.e.\ every $O_k$-linear isomorphism $O_k^{I_0} \to O_k^{I_1}$ is a homeomorphism with respect to the direct product topologies for any non-measurable sets $I_0$ and $I_1$. We have a more general result referred to as {\it automatic continuity theorem} in various contexts (cf.\ \cite{Woo93} and \cite{Dal00} for Archimedean cases, and \cite{Mih14} Theorem 4.6 for a non-Archimedean case).

\begin{thm}[Automatic continuity theorem for cofree $O_k$-modules]
\label{automatic continuity}
Suppose that $k$ is non-spherically complete. Then for any $O_k$-module $M_0$ and any cofree $O_k$-module $M_1$, every $O_k$-linear homomorphism $M_0 \to M_1$ is weakly continuous.
\end{thm}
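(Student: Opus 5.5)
The plan is to reduce weak continuity of $\psi \colon M_0 \to M_1$ to the continuity of finitely many scalar functionals, using the explicit description of $M_1^{\vee}$ from Theorem \ref{Schikhof} and Proposition \ref{characterisation of weak topology} (1). Fix an $O_k$-linear isomorphism $M_1 \cong O_k^I$ with $I$ non-measurable. An $O_k$-linear isomorphism $\alpha \colon M_1 \to O_k^I$ is a weak homeomorphism, since $\alpha^{\vee}$ identifies the duals $(O_k^I)^{\vee} \to M_1^{\vee}$ (composition with $\alpha$ and $\alpha^{-1}$ preserves boundedness and sup norms), so the functionals defining the weak topologies correspond. Hence we may assume $M_1 = O_k^I$.

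By the definition of the weak topology on $M_1$, a map $\psi \colon M_0 \to M_1$ is weakly continuous as soon as $v \circ \psi \colon M_0 \to k$ is weakly continuous for every $v \in M_1^{\vee}$. For $v \in M_1^{\vee}$, the composite $v \circ \psi$ is $O_k$-linear and bounded, with $\sup_{m \in M_0} \v{v(\psi(m))} \leq \n{v} < \infty$, so $v \circ \psi \in M_0^{\vee}$. By the definition of the weak topology on $M_0$, every element of $M_0^{\vee}$ is a weakly continuous map $M_0 \to k$. Hence $v \circ \psi$ is weakly continuous, and therefore so is $\psi$. This is the same estimate as in the proof of Corollary \ref{adjoint property} (2).

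The argument is essentially formal: the weak topologies are initial topologies for the dual families, and $\psi^{\vee}$ maps $M_1^{\vee}$ into $M_0^{\vee}$. On this reading neither non-spherical completeness nor cofreeness is actually used. The expected hard part is instead showing that weak continuity is the meaningful notion here. By Proposition \ref{characterisation of weak topology} (1), which relies on the identification $(O_k^I)^{\vee} \cong \rC_0(I,k)$ from \cite{MN89} \S 12 Corollary 7.20 and hence on the hypotheses, the weak topology of $M_1$ is the product topology. So the theorem says that every algebraic homomorphism into $O_k^I$ is continuous for the product topology on the target, paired with the weak topology on the source. I would include this remark, and if a stronger form is intended, I would check coordinatewise continuity: the projections $\pi_i = \delta_i \in \rC_0(I,k)$ are themselves elements of $M_1^{\vee}$, so the same argument applies to each $\pi_i \circ \psi$.
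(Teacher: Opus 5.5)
Your proof is correct, but it takes a different and more general route than the paper. The paper first applies Proposition \ref{characterisation of weak topology} (1) to replace the weak topology of $M_1 \cong O_k^I$ by the direct product topology. After that it only has to check the coordinates $M_0 \to O_k$ of the homomorphism. Each coordinate is $O_k$-linear with values in $O_k$, so it lies in $M_0^{\vee}$ and is weakly continuous by definition. That reduction uses the non-trivial half of Proposition \ref{characterisation of weak topology} (1): every element of $(O_k^I)^{\vee}$ is continuous for the product topology. This rests on $(O_k^I)^{\vee} \cong \rC_0(I,k)$, and it is exactly where non-spherical completeness and non-measurability enter.

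You instead test against all of $M_1^{\vee}$ at once, using that $v \mapsto v \circ \psi$ maps $M_1^{\vee}$ into $M_0^{\vee}$ (the norm estimate of Corollary \ref{adjoint property} (2)). This is purely formal from the definition of weak topologies as initial topologies. It proves the statement for an arbitrary $O_k$-module $M_1$ and any $k$, and it makes your preliminary reduction to $M_1 = O_k^I$ unnecessary; that reduction is itself an instance of the same argument. What the paper's route emphasises, and what genuinely needs the hypotheses, is reading the conclusion as continuity into the product topology of $O_k^I$. Your argument recovers that reading as soon as Proposition \ref{characterisation of weak topology} (1) is granted.

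One small correction to your last paragraph: coordinatewise continuity is not a stronger form. The coordinate projections $O_k^I \to O_k$ always lie in $(O_k^I)^{\vee}$, so the product topology is always coarser than the weak topology. Hence coordinatewise continuity follows from what you proved, and it is equivalent to it only under the theorem's hypotheses.
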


\begin{proof}
By Proposition \ref{characterisation of weak topology} (1), it suffices to show that for any $O_k$-module $M$, every $O_k$-linear homomorphism $\psi \colon M \to O_k$ is continuous with respect to the weak topology of $M$ and the valuation topology of $O_k$. However, the continuity of $\psi$ trivially follows from the definition of the weak topology.
\end{proof}

\begin{crl}
Suppose that $k$ is non-spherically complete. Then for any cofree $O_k$-module $M$, every $O_k$-linear automorphism $M \to M$ is a weak homeomorphism.
\end{crl}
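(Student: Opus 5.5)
The plan is to apply Theorem \ref{automatic continuity} twice, once to the automorphism and once to its inverse. Let $\psi \colon M \to M$ be an $O_k$-linear automorphism of the cofree $O_k$-module $M$. Its set-theoretic inverse $\psi^{-1} \colon M \to M$ is again $O_k$-linear, since the inverse of a bijective $O_k$-linear homomorphism is $O_k$-linear.

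For the first step, take $M_0 \coloneqq M$ and $M_1 \coloneqq M$ in Theorem \ref{automatic continuity}. Since $M_1 = M$ is cofree and $k$ is non-spherically complete, $\psi$ is continuous with respect to the weak topology of $M$ on both sides. The same application to $\psi^{-1}$ shows that $\psi^{-1}$ is weakly continuous.

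A bijection that is continuous with a continuous inverse is a homeomorphism, so $\psi$ is a weak homeomorphism in the sense of the definition of weakly continuous maps and weak homeomorphisms. There is essentially no obstacle here. The only points to check are that the inverse is $O_k$-linear and that Theorem \ref{automatic continuity} places no hypothesis on the domain $M_0$, so it applies to $M$ itself with the same weak topology on domain and codomain.
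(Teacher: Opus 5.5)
Your proof is correct and is the same argument as the paper's. The paper states the corollary as a direct consequence of Theorem~\ref{automatic continuity}; you make explicit that the theorem is applied with $M_0 = M_1 = M$ to both $\psi$ and its $O_k$-linear inverse $\psi^{-1}$.
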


\begin{proof}
The assertion is a direct consequence of Theorem \ref{automatic continuity}.
\end{proof}

We observe the relation between the weak topology and the Banach localisation.

\begin{prp}
\label{cofree scalar extension}
Let $M$ be a cofree $O_k$-module.
\bi
\item[(1)] If $k$ is non-spherically complete, then the weak topology of $kM$ is Hausdorff.
\item[(2)] The canonical embedding $M \hookrightarrow kM$ is a weak homeomorphism onto the weakly closed image.
\ei
\end{prp}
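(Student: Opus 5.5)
We first reduce to $M = O_k^I$ with $I$ non-measurable, so that $kM = \ell^{\infty}(I,k)$ by Corollary \ref{cofree implies saturated} and Proposition \ref{saturated closed unit ball}. By Theorem \ref{automatic continuity} applied to an $O_k$-linear isomorphism and its inverse, any $O_k$-linear isomorphism $M \to O_k^I$ is a weak homeomorphism. Its scalar extension is an isometric isomorphism $kM \to \ell^{\infty}(I,k)$ by Proposition \ref{localisation operator norm}, hence also a weak homeomorphism. Both assertions are therefore invariant under this identification.

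For (1), we use that the evaluations $\ev_i \colon \ell^{\infty}(I,k) \to k$ at $i \in I$ are contracting, so they belong to $\ell^{\infty}(I,k)^{\vee}$. The family $(\ev_i)_{i \in I}$ separates points, so the weak topology of $\ell^{\infty}(I,k)$ is Hausdorff. This step does not need non-spherical completeness; the hypothesis in the statement is presumably only there so the paper's conventions for weak topologies are in force.

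For (2), we compare two topologies on $M = O_k^I$. The first is the topology induced from the weak topology of $kM$, namely the weakest topology making $\phi|_{O_k^I}$ continuous for all $\phi \in (kM)^{\vee}$. The second is the weak topology of $M$ itself, defined through $M^{\vee}$. By Proposition \ref{operator norm}, restriction $(kM)^* \to M^{\vee}$ is a $k$-linear isomorphism; its inverse is scalar extension, which is bounded. The two families of functionals therefore agree up to scalar multiples in $k^{\times}$, and scaling a functional does not change the topology it generates. So the two topologies coincide and the embedding is a weak homeomorphism onto its image. The point to check carefully is that $M^{\vee}$ is defined via $\ell^{\infty}(M,k)$, i.e.\ as bounded functionals, so $(kM)^{\vee}$ corresponds to the unit ball of $M^{\vee}$ and rescaling accounts for everything else.

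The remaining and main step is showing that the image $O_k^I$ is weakly closed in $\ell^{\infty}(I,k)$. We write
\[
O_k^I = \bigcap_{i \in I} \set{v \in \ell^{\infty}(I,k)}{\v{\ev_i(v)} \leq 1}.
\]
Each set in the intersection is the preimage under the weakly continuous map $\ev_i$ of the closed ball $O_k \subset k$, so each is weakly closed, and hence so is the intersection. The subtle point is only that the identification of $kM$ with $\ell^{\infty}(I,k)$ makes $M$ correspond exactly to $O_k^I$, which holds by the saturatedness in Corollary \ref{cofree implies saturated}. With that in place, the argument is complete.
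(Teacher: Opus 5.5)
Your proof is correct. For (2) it is essentially the paper's argument. The restriction bijection $(kM)^* \to M^{\vee}$ of Proposition \ref{operator norm}, together with rescaling by $k^{\times}$, identifies the two families of functionals that define the topologies on $M$. Weak closedness then comes from writing the image as $\bigcap_{i \in I} \ev_i^{-1}(O_k)$.

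For (1) you take a more elementary route than the paper. The paper deduces Hausdorffness from the injectivity of the canonical map $kM \to (kM)^{**}$, quoted from \cite{MN89} \S 12 Lemma 7.19 (1), and non-spherical completeness enters only through that citation. You instead observe that the coordinate evaluations $\ev_i$ are contracting on $\ell^{\infty}(I,k)$ and already separate points. Injectivity into the bidual is just a restatement of Hausdorffness of the weak topology, so your argument is self-contained. It also shows that (1) holds for any non-trivially valued $k$ and any index set $I$. The paper's version gains nothing here beyond uniformity with the reflexivity results it uses elsewhere.

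One small correction. In your reduction you cite Theorem \ref{automatic continuity}, which assumes $k$ non-spherically complete, while (2) carries no such hypothesis. You do not need that theorem. For an $O_k$-linear isomorphism $\psi \colon M \to O_k^I$, precomposition $v \mapsto v \circ \psi$ is a bijection $(O_k^I)^{\vee} \to M^{\vee}$, so $\psi$ is a weak homeomorphism directly from the definition of the weak topology. Likewise $k\psi$ is isometric for the gauges by the very definition of the gauge. The gauge of $O_k^I$ is the supremum norm for any non-trivial valuation, so only a non-trivial valuation is used.
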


\begin{proof}
(1) Since $kM$ admits an isometric $k$-linear isomorphism to $\ell^{\infty}(I,k)$ for some non-measurable set $I$, the assertion follows from the injectivity of the canonical $k$-linear homomorphism $kM \to (kM)^{**}$ (cf.\ \cite{MN89} \S 12 Lemma 7.19 (1)).

\vs
(2) By the first assertion of Proposition \ref{operator norm} applied to $kM$, the restriction map $(kM)^* \to M^{\vee}$ is bijective. Since $(kM)^{\vee}$ generates $(kM)^*$, continuity with respect to the evaluations by all elements of $M^{\vee}$ is equivalent to continuity with respect to the evaluations by all elements of $(kM)^{\vee}$. Take an $O_k$-linear isomorphism $\psi \colon M \to O_k^I$. We denote by $k \psi \colon kM \to k(O_k^I)$ the scalar extension of $\psi$. For each $i \in I$, we denote by $\ev_i$ the evaluation $\ell^{\infty}(I,k) \to k$ at $i$. Identifying $k(O_k^I)$ with $\ell^{\infty}(I,k)$ by the canonical isometric isomorphism, we obtain
\be
M & = & \set{m \in kM}{\n{m}_M \leq 1} = \set{m \in kM}{\n{(k \psi)(m)} \leq 1} \\
& = & \set{m \in kM}{\forall i \in I[(k \psi)(m)(i) \in O_k]} = \set{m \in kM}{\forall i \in I[(\ev_i \circ k \psi)(m) \in O_k]},
\ee
and hence $M$ is weakly closed in $kM$.
\end{proof}

A topological $O_k$-module $M$ is said to be {\it linear} if its topology is linear, is said to be a {\it locally convex $k$-vector space} if it is linear and its underlying $O_k$-module forms a $k$-vector space, and is said to be {\it complete} if every Cauchy net in $M$ converges to a unique element. In particular, the completeness implies the Hausdorffness in our context. We introduce a weak variant of the completeness and an algebraic variant of a compactoid.

\begin{dfn}
\label{compactoid}
An $O_k$-module $M$ is said to be {\it weakly complete} if it is complete with respect to the weak topology, is said to be an {\it algebraic compactoid over $O_k$} if for any weakly open $O_k$-submodule $L \subset M$, there exists an $F \in \cP_{< \aleph_0}(M)$ such that $M = L + \sum_{m \in F} O_k m$, and is said to be {\it embeddable} if there exists an $O_k$-linear homomorphism $\iota \colon M \to V$ which is a homeomorphism onto the image with respect to the weak topology of $M$ for some Hausdorff locally convex $k$-vector space $V$.
\end{dfn}

By the definition, an $O_k$-module is an embeddable algebraic compactoid over $O_k$ if and only if it is a compactoid in the sense \cite{Sch95} \S 4.2 with respect to the weak topology. We give an algebraic variant of \cite{Sch95} Proposition 3.1.

\begin{prp}
\label{cofreeness implies compactoid}
Suppose that $k$ is non-spherically complete. Every cofree $O_k$-module is a weakly complete embeddable algebraic compactoid over $O_k$.
\end{prp}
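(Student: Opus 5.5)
We reduce to the model case. Let $M$ be cofree and fix an $O_k$-linear isomorphism $\psi \colon M \to O_k^I$ with $I$ non-measurable. By Theorem \ref{automatic continuity}, applied to $\psi$ and to $\psi^{-1}$ (both targets are cofree), $\psi$ is a weak homeomorphism. Weak completeness, the algebraic compactoid property and embeddability are all defined in terms of the $O_k$-module structure and the weak topology, so they transfer along $\psi$. It therefore suffices to treat $M = O_k^I$. By Proposition \ref{characterisation of weak topology} (1), its weak topology is the direct product topology of the valuation topologies on $O_k$.

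\emph{Weak completeness.} $O_k$ is complete, since it is closed in the complete field $k$. A product of complete Hausdorff uniform spaces is complete. Here the uniformity induced by the linear topology is the product uniformity, because a fundamental system of neighbourhoods of $0$ consists of $O_k$-submodules. Concretely, take a Cauchy net $(m_\lambda)$ in $O_k^I$. Each coordinate $(m_\lambda(i))$ is Cauchy in $O_k$, so it converges to some $m(i) \in O_k$. The net then converges to $m$ coordinatewise, which is convergence in the product topology. Uniqueness of the limit follows from Hausdorffness.

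\emph{Embeddability.} Take $V \coloneqq k^I$ with the product of the valuation topologies. It is a Hausdorff locally convex $k$-vector space, and a fundamental system of neighbourhoods of $0$ consists of the $O_k$-submodules $\set{v}{\forall i \in I_0[\v{v(i)} \leq \epsilon]}$ for $I_0 \in \cP_{< \aleph_0}(I)$ and $\epsilon > 0$. The inclusion $O_k^I \hookrightarrow k^I$ is $O_k$-linear, and the subspace topology on $O_k^I$ is exactly the product topology. By Proposition \ref{characterisation of weak topology} (1) this is the weak topology, so the inclusion is a homeomorphism onto its image. (Alternatively one could embed into $kM$ via Proposition \ref{cofree scalar extension} (2); but one would then have to check separately that the weak topology of $kM$ is a Hausdorff linear topology, which Proposition \ref{cofree scalar extension} (1) provides.)

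\emph{Algebraic compactoid.} Let $L \subset O_k^I$ be a weakly open $O_k$-submodule. Since it is a neighbourhood of $0$ in the product topology, it contains a basic submodule
\[
L_0 = \set{m}{\forall i \in I_0[\v{m(i)} \leq \epsilon]}
\]
for some finite $I_0$ and some $\epsilon > 0$. Because $\v{k}$ is non-trivial, we may shrink $\epsilon$ so that $\epsilon = \v{c}$ with $c \in O_k \setminus \ens{0}$. Take $F \coloneqq \set{\delta_i}{i \in I_0}$. Given $m \in O_k^I$, write
\[
m = \Bigl(m - \sum_{i \in I_0} m(i) \delta_i\Bigr) + \sum_{i \in I_0} m(i) \delta_i .
\]
The first summand vanishes on $I_0$, so it lies in $L_0 \subset L$, and the second lies in $\sum_{i \in I_0} O_k \delta_i$. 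Hence $O_k^I = L + \sum_{m \in F} O_k m$.

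The only real input is the identification of the weak topology of $O_k^I$ with the product topology, namely Proposition \ref{characterisation of weak topology} (1). This is where non-spherical completeness and the non-measurability of $I$ are used, so the main obstacle is already handled by that earlier result. The remaining subtle point is the transfer step, which needs $\psi$ to be a weak homeomorphism. This follows from Theorem \ref{automatic continuity} applied in both directions.
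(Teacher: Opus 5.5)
Your proof is correct and follows the paper's proof essentially step for step. You reduce to $O_k^I$, then derive weak completeness and the compactoid property (with $F = \set{\delta_i}{i \in I_0}$) from the identification of the weak topology with the product topology in Proposition~\ref{characterisation of weak topology} (1). The only deviation is the embedding: you use $k^I$ with the product topology rather than $k(O_k^I)$ with its weak topology via Proposition~\ref{cofree scalar extension}, which spares you the Hausdorffness of the weak topology of $kM$. Your reduction step via Theorem~\ref{automatic continuity} is valid, though it also follows directly because the weak topology is defined purely from the $O_k$-module structure.
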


\begin{proof}
It suffices only to consider $O_k^I$ for a non-measurable set $I$. By the completeness of $O_k$ and Proposition \ref{characterisation of weak topology} (1), $O_k^I$ is weakly complete. We denote by $\iota$ the inclusion $O_k^I \hookrightarrow k(O_k^I)$. By Proposition \ref{cofree scalar extension} (1) and (2), $k(O_k^I)$ is a Hausdorff locally convex $k$-vector space with respect to the weak topology, and $\iota$ is a weak homeomorphism onto the image. Therefore, $O_k^I$ is embeddable.

\vs
Let $L \subset O_k^I$ be a weakly open $O_k$-submodule. By Proposition \ref{characterisation of weak topology} (1), there exists an $(I_0,\epsilon) \in \cP_{< \aleph_0}(I) \times \R_{> 0}$ such that $\set{m \in O_k^I}{\forall i \in I_0[\v{m(i)} \leq \epsilon]} \subset L$. Set $F \coloneqq \set{\delta_i}{i \in I_0} \subset O_k^I$. We have $O_k^I = L + \sum_{m \in F} O_k m$.
\end{proof}

\section{Schneider--Teitelbaum Duality}
\label{Schneider--Teitelbaum Duality}

We regard the duality in \S \ref{Schikhof Duality} as a duality of representations of the trivial group, and extend it to a duality of representations of a profinite group.

\subsection{Categorical Equivalence}
\label{Categorical Equivalence 2}

For Banach $k$-vector spaces $V_0$ and $V_1$, we denote by $\Hom_{\leq 1}(V_0,V_1)_{\r{s}}$ the $O_k$-module $\Hom_{\leq 1}(V_0,V_1)$ equipped with the topology of pointwise convergence. For $O_k$-modules $M_0$ and $M_1$, we denote by $\Hom_{O_k}(M_0,M_1)_{\r{b}}$ the $O_k$-module $\Hom_{O_k}(M_0,M_1)$ equipped with the topology of uniform convergence with respect to the weak topology on $M_0$ and the given topology of $M_1$. We give a non-spherically complete analogue of \cite{ST02} Lemma 1.6 and \cite{Mih21-2} Proposition 2.3.

\begin{prp}
\label{hom topology}
Suppose that $k$ is non-spherically complete. Then for any free Banach $k$-vector spaces $V_0$ and $V_1$, the $O_k$-linear isomorphism in Corollary \ref{adjoint property} (1) is a homeomorphism $\Hom_{\leq 1}(V_0,V_1)_{\r{s}} \to \Hom_{O_k}(V_1^{\vee},V_0^{\vee})_{\r{b}}$. 
\end{prp}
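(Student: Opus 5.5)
Reduce to $V_0 = \rC_0(I_0,k)$ and $V_1 = \rC_0(I_1,k)$ for non-measurable sets $I_0$, $I_1$. Then $V_j^{\vee} \cong O_k^{I_j}$ by the proof of Theorem \ref{Schikhof} (1). By Proposition \ref{characterisation of weak topology} (1), the weak topology of $V_1^{\vee}$ is the direct product topology of $O_k^{I_1}$. Consequently a fundamental system of neighbourhoods of $0$ in $V_1^{\vee}$ is given by
\[
\set{m \in V_1^{\vee}}{\forall v \in F[\v{m(v)} \leq \epsilon]}
\]
with $F \in \cP_{< \aleph_0}(V_1^{\circ})$ and $\epsilon \in \R_{> 0}$; one may even take $F \subset \set{\delta_i}{i \in I_1}$. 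The topology on $V_0^{\vee}$ used in the definition of $\Hom_{O_k}(V_1^{\vee},V_0^{\vee})_{\r{b}}$ is its weak topology. By the same proposition, this is the topology of pointwise convergence on elements of $V_0$, and it suffices to test on finitely many $w \in V_0^{\circ}$.

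With this description, a basic neighbourhood of $0$ in $\Hom_{O_k}(V_1^{\vee},V_0^{\vee})_{\r{b}}$ has the form
\[
U(G,\epsilon) \coloneqq \set{\psi}{\forall m \in V_1^{\vee} \, \forall w \in G[\v{\psi(m)(w)} \leq \epsilon]},
\]
where $G \in \cP_{< \aleph_0}(V_0^{\circ})$. The point is that uniformity over the weakly bounded set $V_1^{\vee}$, i.e.\ over the whole module, is what is intended. For $\psi = \phi^{\vee}$ we have $\psi(m)(w) = m(\phi(w))$. Moreover,
\[
\sup_{m \in V_1^{\vee}} \v{m(\phi(w))} = \n{\phi(w)},
\]
because $V_1$ is free: the coordinate functionals $\delta_i$ already attain the sup norm on $\rC_0(I_1,k)$. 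Hence $\phi^{\vee} \in U(G,\epsilon)$ if and only if $\n{\phi(w)} \leq \epsilon$ for all $w \in G$. That is exactly a basic neighbourhood of $0$ in $\Hom_{\leq 1}(V_0,V_1)_{\r{s}}$, since for the pointwise topology it suffices to use finitely many test vectors in $V_0^{\circ}$ after rescaling. Both topologies are translation invariant on these $O_k$-modules, so matching the neighbourhood bases of $0$ gives the homeomorphism. Bijectivity is already Corollary \ref{adjoint property} (1).

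The main obstacle is to make the identification of the topology of uniform convergence precise. One must check that uniform convergence with respect to the weak topology on $V_1^{\vee}$ and the weak topology on $V_0^{\vee}$ amounts to uniformity over all of $V_1^{\vee}$ for each finite test set $G$. If the intended meaning is uniform convergence on weakly bounded subsets, I would use Proposition \ref{cofreeness implies compactoid}: every such subset is contained in $V_1^{\vee}$ itself, and $V_1^{\vee}$ is itself an algebraic compactoid. Given a weakly open $L$, write $V_1^{\vee} = L + \sum_{m \in F} O_k m$ and use $O_k$-linearity of $\psi$. Uniformity on $V_1^{\vee}$ then reduces to control on the finite set $F$ plus control on $L$. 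The latter is handled by choosing $L = \set{m}{\forall w \in G[\v{m(\phi(w))} \leq \epsilon]}$, using that each $\phi(w)$ is a single vector. The equality of the supremum with the norm, which uses the non-spherically complete setting only through Theorem \ref{Schikhof}, is the remaining routine check.
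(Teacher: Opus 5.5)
Your proposal is correct and is essentially the paper's proof: both identify the weak topology of $V_0^{\vee}$ with pointwise convergence on $V_0$, match the neighbourhoods $\set{\phi}{\n{\phi(w)} \leq \epsilon}$ and $\set{\psi}{\sup_{m \in V_1^{\vee}} \v{\psi(m)(w)} \leq \epsilon}$ for $w \in V_0$, and conclude from $\sup_{m \in V_1^{\vee}} \v{m(x)} = \n{x}$, which you obtain directly from the coordinate functionals where the paper cites Theorem \ref{Schikhof} (1). Two small remarks: that norm identity holds over any $k$, since non-spherical completeness enters only via Corollary \ref{adjoint property} (1) and Proposition \ref{characterisation of weak topology} (1); and your compactoid digression is superfluous, because $V_1^{\vee}$ is itself weakly bounded, so uniform convergence on weakly bounded sets is just uniform convergence on $V_1^{\vee}$.
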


\begin{proof}
We denote by $\iota$ the given map. The linear topology on $\Hom_{\leq 1}(V_0,V_1)_{\r{s}}$ is generated by the set
\be
\set{\set{\phi \in \Hom_{\leq 1}(V_0,V_1)}{\n{\phi(v)} \leq \epsilon}}{(v,\epsilon) \in V_0 \times \R_{> 0}}
\ee
of $O_k$-submodules, and the linear topology on $\Hom_{O_k}(V_1^{\vee},V_0^{\vee})_{\r{b}}$ is generated by the set
\be
\set{\set{\psi \in \Hom_{O_k}(V_1^{\vee},V_0^{\vee})}{\sup_{m \in V_1^{\vee}} \v{v(\psi(m))} \leq \epsilon}}{(v,\epsilon) \in V_0^{\vee \vee} \times \R_{> 0}}
\ee
of $O_k$-submodules, which coincides with the set
\be
\set{\set{\psi \in \Hom_{O_k}(V_1^{\vee},V_0^{\vee})}{\sup_{m \in V_1^{\vee}} \v{\psi(m)(v)} \leq \epsilon}}{(v,\epsilon) \in V_0 \times \R_{> 0}}
\ee
by Theorem \ref{Schikhof} (2).

\vs
Let $(v,\epsilon) \in V_0 \times \R$. Set
\be
U & \coloneqq & \set{\phi \in \Hom_{\leq 1}(V_0,V_1)}{\n{\phi(v)} \leq \epsilon} \\
U' & \coloneqq & \set{\psi \in \Hom_{O_k}(V_1^{\vee},V_0^{\vee})}{\sup_{m \in V_1^{\vee}} \v{\psi(m)(v)} \leq \epsilon}.
\ee
By the argument above, it suffices to show $\iota(U) = U'$.

\vs
First, let $\phi \in U$. For any $m \in V_1^{\vee}$, we have
\be
\v{\iota(\phi)(m)(v)} = \v{(m \circ \phi)(v)} = \v{m(\phi(v))} \leq \n{\phi(v)} \leq \epsilon.
\ee
This implies $\iota(\phi) \in U'$.

\vs
Next, let $\psi \in U'$. We have
\be
\n{\iota^{-1}(\psi)(v)} = \sup_{m \in V_1^{\vee}} \v{m(\iota^{-1}(\psi)(v))} = \sup_{m \in V_1^{\vee}} \v{\psi(m)(v)} \leq \epsilon
\ee
by Theorem \ref{Schikhof} (1). This implies $\iota^{-1}(\psi) \in U$.
\end{proof}

We recall a partial generalisation of Banach--Steinhaus theorem (cf. \cite{Sch02} Corollary 6.16) in \cite{Mih21-2} Proposition 3.1.

\begin{prp}
\label{Banach Currying}
Let $X$ be a topological space, and $V_0$ and $V_1$ Banach $k$-vector spaces. For any map $\rho \colon X \to \Hom_{\leq 1}(V_0,V_1)_{\r{s}}$, the following are equivalent:
\bi
\item[(1)] The map $\rho$ is continuous.
\item[(2)] The map
\be
X \times V_0 & \to & V_1 \\
(x,v) & \mapsto & \rho(x)(v)
\ee
is continuous.
\ei
\end{prp}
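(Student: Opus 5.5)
The implication from (2) to (1) is elementary, and it is where I would start. Suppose the evaluation map $X \times V_0 \to V_1$ is continuous. Then for each fixed $v \in V_0$ the map $x \mapsto \rho(x)(v)$ is continuous, being the composite of $X \to X \times V_0$, $x \mapsto (x,v)$, with the evaluation map. The topology of $\Hom_{\leq 1}(V_0,V_1)_{\r{s}}$ is the weakest one making every evaluation $\phi \mapsto \phi(v)$ continuous. Hence $\rho$ is continuous.

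For (1) implies (2), I would fix $(x_0,v_0) \in X \times V_0$ and $\epsilon \in \R_{>0}$, and write
\[
\rho(x)(v) - \rho(x_0)(v_0) = \rho(x)(v - v_0) + \bigl(\rho(x)(v_0) - \rho(x_0)(v_0)\bigr).
\]
The second term is handled by (1) applied to the subbasic neighbourhood attached to $(v_0,\epsilon)$. This gives a neighbourhood $U$ of $x_0$ such that $\n{\rho(x)(v_0) - \rho(x_0)(v_0)} \leq \epsilon$ for all $x \in U$.

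The first term is where one would normally need Banach--Steinhaus, namely equicontinuity of the family $\rho(U)$. Here, however, every $\rho(x)$ is contracting by hypothesis, so the family is automatically equibounded with operator norm at most $1$. Consequently, for $\n{v - v_0} \leq \epsilon$ we get $\n{\rho(x)(v - v_0)} \leq \n{v - v_0} \leq \epsilon$. By the ultrametric inequality, $\n{\rho(x)(v) - \rho(x_0)(v_0)} \leq \epsilon$ on the neighbourhood $U \times \set{v \in V_0}{\n{v - v_0} \leq \epsilon}$ of $(x_0,v_0)$, which gives continuity.

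Since the target in (1) is $\Hom_{\leq 1}$ and not $\Hom$, the uniform bound is built into the statement. So I expect no real obstacle; the only point needing care is to state clearly that the pointwise topology is the initial topology for the evaluations. The reference to the Banach--Steinhaus theorem indicates what the argument would require without the contraction hypothesis, but it is not needed here.
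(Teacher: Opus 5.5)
Your argument is correct and complete: (2)$\Rightarrow$(1) is the universal property of the initial topology of pointwise convergence, and in (1)$\Rightarrow$(2) the uniform bound $\n{\rho(x)} \leq 1$ together with the ultrametric inequality supplies the equicontinuity that Banach--Steinhaus would otherwise be needed for. The paper gives no argument of its own but cites \cite{Mih21-2} Proposition 3.1, noting that the proof there uses nothing about $k$, not even non-triviality of the valuation; your self-contained argument has exactly this feature, so it is consistent with the cited proof.
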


\begin{proof}
The assertion is identical to that of \cite{Mih21-2} Proposition 3.1 except that $k$ was assumed to be a local field throughout \cite{Mih21-2}. Since the proof of \cite{Mih21-2} Proposition 3.1 did not use the assumption, it works for a general complete valuation field (even possibly with trivial valuation).
\end{proof}

\begin{prp}
\label{dual Currying}
Let $X$ be a topological space, $M_0$ an algebraic compactoid over $O_k$ (cf.\ Definition \ref{compactoid}), and $M_1$ an $O_k$-module. For any map $\rho \colon X \to \Hom_{O_k}(M_0,M_1)_{\r{b}}$, the following are equivalent:
\bi
\item[(1)] The map $\rho$ is continuous.
\item[(2)] The map
\be
X \times M_0 & \to & M_1 \\
(x,m) & \mapsto & \rho(x)(m)
\ee
is continuous with respect to the weak topologies of $M_0$ and $M_1$.
\ei
\end{prp}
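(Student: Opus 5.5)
We will prove both implications by working with the subbases of the linear topologies concerned. The topology of $\Hom_{O_k}(M_0,M_1)_{\r{b}}$ is linear and generated by the $O_k$-submodules
\be
U(L_1,L_0) \coloneqq \set{\psi \in \Hom_{O_k}(M_0,M_1)}{\psi(M_0) \subset L_1},
\ee
where $L_1$ runs through weakly open $O_k$-submodules of $M_1$ of the form $\set{m \in M_1}{\v{v(m)} \leq \epsilon}$ with $(v,\epsilon) \in M_1^{\vee} \times \R_{> 0}$. (Uniform convergence is tested on all of $M_0$, and the relevant structure is only the topology of $M_1$.) The weak topologies of $M_0$ and $M_1$ are linear as well, with subbases of the analogous form.

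For (1) $\Rightarrow$ (2), fix $(x_0,m_0) \in X \times M_0$ and a subbasic weakly open $O_k$-submodule $L_1 \subset M_1$. We write
\be
\rho(x)(m) - \rho(x_0)(m_0) = (\rho(x) - \rho(x_0))(m) + \rho(x_0)(m - m_0).
\ee
By (1) there is a neighbourhood $N$ of $x_0$ with $\rho(x) - \rho(x_0) \in U(L_1)$ for all $x \in N$, so the first term lies in $L_1$ for every $m \in M_0$. By Theorem \ref{automatic continuity}, or directly because $v \circ \rho(x_0) \in M_0^{\vee}$ whenever $v \in M_1^{\vee}$, the map $\rho(x_0)$ is weakly continuous. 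Hence there is a weakly open $O_k$-submodule $L_0 \subset M_0$ with $\rho(x_0)(L_0) \subset L_1$. Then $N \times (m_0 + L_0)$ maps into $\rho(x_0)(m_0) + L_1$. This direction does not use the compactoid hypothesis.

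For (2) $\Rightarrow$ (1), fix $x_0$ and a subbasic $L_1$. It suffices to find a neighbourhood $N$ of $x_0$ with $(\rho(x) - \rho(x_0))(M_0) \subset L_1$ for all $x \in N$. We proceed as follows.
\bi
\item[(a)] By joint continuity of $(x,m) \mapsto \rho(x)(m)$ at $(x_0,0)$, whose value is $0$, there are a neighbourhood $N_0$ of $x_0$ and a weakly open $O_k$-submodule $L_0 \subset M_0$ with $\rho(x)(L_0) \subset L_1$ for all $x \in N_0$. In particular $\rho(x_0)(L_0) \subset L_1$, so $(\rho(x)-\rho(x_0))(L_0) \subset L_1$ for $x \in N_0$, since $L_1$ is an $O_k$-submodule.
\item[(b)] Since $M_0$ is an algebraic compactoid, choose $F \in \cP_{< \aleph_0}(M_0)$ with $M_0 = L_0 + \sum_{m \in F} O_k m$.
\item[(c)] For each $m \in F$, continuity of $x \mapsto \rho(x)(m)$ (a restriction of the map in (2)) gives a neighbourhood $N_m$ of $x_0$ with $(\rho(x) - \rho(x_0))(m) \in L_1$ for $x \in N_m$.
\item[(d)] Set $N \coloneqq N_0 \cap \bigcap_{m \in F} N_m$, which is a neighbourhood of $x_0$ because $F$ is finite. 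Using the decomposition in (b) and $O_k$-linearity, $(\rho(x)-\rho(x_0))(M_0) \subset L_1$ for every $x \in N$.
\ei

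For a finite intersection of subbasic $L_1$, we intersect the corresponding neighbourhoods $N$.

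The main obstacle is step (a): it needs a \emph{uniform} $L_0$ over a neighbourhood of $x_0$. Pointwise continuity would not supply this, but joint continuity at $(x_0,0)$ does. Step (b) is the only place where the compactoid hypothesis enters. A minor point to check is that the topology on $\Hom_{O_k}(M_0,M_1)_{\r{b}}$ is really described by the $U(L_1)$, i.e.\ uniform convergence on all of $M_0$ rather than on bounded subsets. This matches the use of the supremum over $V_1^{\vee}$ in the proof of Proposition \ref{hom topology}.
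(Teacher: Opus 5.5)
Your proof is correct and follows essentially the same route as the paper's: the same splitting $\rho(x)(m)-\rho(x_0)(m_0)=(\rho(x)-\rho(x_0))(m)+\rho(x_0)(m-m_0)$ for (1)$\Rightarrow$(2). For (2)$\Rightarrow$(1) you likewise use joint continuity at $(x_0,0)$ to get a uniform $L_0$, then the compactoid decomposition $M_0 = L_0 + \sum_{m \in F} O_k m$, then pointwise continuity at the finitely many $m \in F$. One small correction: Theorem \ref{automatic continuity} cannot be invoked here, since $M_1$ is not assumed cofree and $k$ is not assumed non-spherically complete; your direct justification that $v \circ \rho(x_0) \in M_0^{\vee}$ is exactly what the paper uses, and it suffices.
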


\begin{proof}
Although the implication from (1) to (2) follows from the first half part of the proof of \cite{Mih21-2} Proposition 3.7 on compact uniform spaces because the assumption of the compactness was used only in the last half part, we directly show it for the reader's convenience. We denote by $\pi$ the map in (2).

\vs
First, assume that $\rho$ is continuous. Let $(x,m,v,\epsilon) \in X \times M_0 \times M_1^{\vee} \times \R_{> 0}$ and set
\be
U \coloneqq \set{m' \in M_1}{\v{v(m' - \pi(x,m))} \leq \epsilon}.
\ee
We show that $\pi^{-1}(U)$ is a neighbourhood of $(x,m)$. Set
\be
\cU \coloneqq \set{\psi \in \Hom_{O_k}(M_0,M_1)}{\sup_{m' \in M_0} \v{v((\psi - \rho(x))(m'))} \leq \epsilon}.
\ee
By the continuity of $\rho$, $\rho^{-1}(\cU)$ is a neighbourhood of $x$. For any $(x',m') \in \rho^{-1}(\cU) \times M_0$ with $\v{\rho(x)^{\vee}(v)(m' - m)} \leq \epsilon$, we have
\be
\v{v(\pi(x',m') - \pi(x,m))} & = & \v{v(\rho(x')(m') - \rho(x)(m))} \\
& \leq & \max \ens{\v{v(\rho(x')(m') - \rho(x)(m'))}, \v{v(\rho(x)(m') - \rho(x)(m))}} \\
& = & \max \ens{\v{v((\rho(x') - \rho(x))(m'))}, \v{v(\rho(x)(m' - m))}} \\
& \leq & \max \ens{\v{v((\rho(x') - \rho(x))(m'))}, \v{\rho(x)^{\vee}(v)(m' - m)}} \leq \epsilon,
\ee
and hence $(x',m') \in \pi^{-1}(U)$. Therefore, $\pi^{-1}(U)$ is a neighbourhood of $(x,m)$ with respect to the weak topology of $M_0$.

\vs
Next, assume that $\pi$ is continuous with respect to the weak topologies of $M_0$ and $M_1$. Let $(x,v,\epsilon) \in X \times M_1^{\vee} \times \R_{> 0}$ and set
\be
\cU \coloneqq \set{\psi \in \Hom_{O_k}(M_0,M_1)}{\sup_{m \in M_0} \v{v((\psi - \rho(x))(m))} \leq \epsilon}.
\ee
We show that $\rho^{-1}(\cU)$ is a neighbourhood of $x$. Set $L' \coloneqq \set{m \in M_1}{\v{v(m)} \leq \epsilon}$. Since $\pi$ is continuous at $(x,0)$, there exists a pair $(U,L)$ of a neighbourhood $U$ of $x$ and a weakly open $O_k$-submodule $L \subset M_0$ such that $\pi(U \times L) \subset L'$. Since $M_0$ is an algebraic compactoid over $O_k$, there exists an $F \in \cP_{< \aleph_0}(M_0)$ such that $M_0 = L + \sum_{m \in F} O_k m$. For each $m \in F$, take a neighbourhood $U_m$ of $x$ such that $\pi(U_m \times \ens{m}) \subset \pi(x,m) + L'$, which exists by the continuity of $\pi$ at $(x,m)$. Set $U' \coloneqq \bigcap (\ens{U} \cup \set{U_m}{m \in F})$.

\vs
Let $x' \in U'$. For any $m \in M_0$, there exists an $(m_0,(c_{m'})_{m' \in F}) \in L \times O_k^F$ such that $m = m_0 + \sum_{m' \in F} c_{m'} m'$ by $M_0 = L + \sum_{m' \in F} O_k m'$, and hence we have
\be
(\rho(x') - \rho(x))(m) & = & \pi(x',m_0) - \pi(x,m_0) - \sum_{m' \in F} c_{m'} (\pi(x',m') - \pi(x,m')) \\
& \in & \pi(U \times L) + \pi(U \times L) + \sum_{m' \in F} O_k (\pi(U_{m'} \times \ens{m'}) - \pi(x,m')) \subset L',
\ee
i.e.\ $\v{v((\rho(x') - \rho(x))(m))} \leq \epsilon$. This implies $x' \in \rho^{-1}(\cU)$, Therefore, $\rho^{-1}(\cU)$ is a neighbourhood of $x$.
\end{proof}

\begin{prp}
\label{completeness of b-topoogy}
For any cofree $O_k$-modules $M_0$ and $M_1$, $\Hom_{O_k}(M_0,M_1)_{\r{b}}$ is a complete linear topological $O_k$-module.
\end{prp}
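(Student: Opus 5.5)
To prove the statement, I will reduce to the case $M_1 = O_k$ and then use the compactoid property of the source. By definition, $\Hom_{O_k}(M_0,M_1)_{\r{b}}$ carries the linear topology generated by the $O_k$-submodules
\be
\cU_{v,\epsilon} \coloneqq \set{\psi \in \Hom_{O_k}(M_0,M_1)}{\sup_{m \in M_0} \v{v(\psi(m))} \leq \epsilon}
\ee
for $(v,\epsilon) \in M_1^{\vee} \times \R_{> 0}$. These are $O_k$-submodules, and the topology is stable under addition and scalar multiplication. Hence $\Hom_{O_k}(M_0,M_1)_{\r{b}}$ is a linear topological $O_k$-module, and only completeness (including Hausdorffness) needs an argument.

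Hausdorffness is the first step. Suppose $\psi \neq 0$, and choose $m$ with $\psi(m) \neq 0$. Since $M_1 \cong O_k^I$ with $I$ non-measurable, some coordinate $\psi(m)(i)$ is nonzero. The coordinate projection at $i$ is $\delta_i \in M_1^{\vee}$ by the identification $M_1^{\vee} \cong \rC_0(I,k)$ of Theorem \ref{Schikhof} (2) and Proposition \ref{characterisation of weak topology} (1). Taking $v = \delta_i$ and $\epsilon < \v{\psi(m)(i)}$ separates $\psi$ from $0$.

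For completeness, I will fix an identification $M_1 = O_k^I$ and take a Cauchy net $(\psi_\lambda)$. For each $m \in M_0$ and $i \in I$, the net $\psi_\lambda(m)(i)$ is Cauchy in $O_k$, uniformly in $m$, because of the neighbourhoods $\cU_{\delta_i,\epsilon}$. Since $O_k$ is complete, the coordinatewise limit $\psi(m) \in O_k^I$ exists. The map $\psi$ is $O_k$-linear as a pointwise limit of linear maps. The remaining task is to show $\psi_\lambda \to \psi$ in the $\r{b}$-topology, that is, uniform convergence of $v \circ \psi_\lambda$ to $v \circ \psi$ on $M_0$ for every $v \in M_1^{\vee} \cong \rC_0(I,k)$, and not only for the $\delta_i$.

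This step is the main obstacle, since the subbasic neighbourhoods use arbitrary $v$. The Cauchy condition is itself stated for every $v$. So for each $v$ the net $v \circ \psi_\lambda$ is uniformly Cauchy in $\ell^{\infty}(M_0,k)$, and it converges uniformly to some $O_k$-linear $g_v \colon M_0 \to k$. I must check $g_v = v \circ \psi$ pointwise. For fixed $m$, the net $\psi_\lambda(m)$ converges in $O_k^I$ with respect to the weak topology, which is the product topology by Proposition \ref{characterisation of weak topology} (1), and $v$ is weakly continuous. Hence $v(\psi_\lambda(m)) \to v(\psi(m))$, which gives $g_v(m) = v(\psi(m))$. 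Passing to the limit in the Cauchy estimate $\sup_m \v{v((\psi_\lambda - \psi_\mu)(m))} \leq \epsilon$ then yields $\psi_\lambda \in \psi + \cU_{v,\epsilon}$ eventually. This proves completeness. The compactoid property of $M_0$ from Proposition \ref{cofreeness implies compactoid} is not needed in this argument; only the cofreeness of $M_1$ is used.
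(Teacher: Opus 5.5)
Your proof is correct and is essentially the paper's argument. The paper says only that the claim follows from the weak completeness of $M_1$ (Proposition \ref{cofreeness implies compactoid}); you unpack exactly that weak completeness (coordinatewise limits in $O_k$ plus Proposition \ref{characterisation of weak topology} (1)), add the standard passage to the limit in the uniform Cauchy estimate, and rightly note that only the cofreeness of $M_1$ matters. Like the paper, you tacitly need $k$ to be non-spherically complete for the cited results. Your opening sentence about reducing to $M_1 = O_k$ and using the compactoid property of $M_0$ should be removed, since the argument never does either.
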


\begin{proof}
The assertion immediately follows from the weak completeness of $M_1$ (cf.\ Proposition \ref{cofreeness implies compactoid}).
\end{proof}

Let $X$ be a compact topological space. We denote by $\rC(X,k)$ the Banach $k$-vector space of continuous functions $X \to k$ equipped it with the supremum norm
\be
\rC(X,k) & \to & \R_{\geq 0} \\
f & \mapsto & \sup_{x \in X} \v{f(x)}.
\ee
For each clopen subset $U \subset X$, we abuse the convention $1_U$ without specifying $X$ for the characteristic function of $U$ regarded as an element of $\rC(X,k)$. The Banach $k$-vector space $\rC(X,k)^*$ is identified with the space of bounded $k$-valued measures on $X$ through the evaluations at $1_U$ for all clopen subsets $U \subset X$, and hence its closed unit ball $\rC(X,k)^{\vee}$ is identified with the space of $O_k$-valued measures on $X$.

\vs
For an $O_k$-module $M_0$ and a topological $O_k$-module $M_1$, we denote by $\Hom_{O_k}^{\r{w}}(M_0,M_1)$ the $O_k$-module of $O_k$-linear homomorphisms $M_0 \to M_1$ continuous with respect to the weak topology of $M_0$ and the given topology of $M_1$. We have the following universality of the measure space $\rC(X,k)^{\vee}$:

\begin{prp}
\label{universality of measure space}
Suppose that $k$ is non-spherically complete. Let $X$ be a compact topological space such that the set of clopen subsets of $X$ is non-measurable.
\bi
\item[(1)] The Banach $k$-vector space $\rC(X,k)$ is free, and the $O_k$-module $\rC(X,k)^{\vee}$ is cofree.
\item[(2)] The map $\ev \colon X \to \rC(X,k)^{\vee}$ assigning to each $x \in X$ the evaluation $\rC(X,k) \to k$ at $x$ is continuous with respect to the weak topology of $\rC(X,k)^{\vee}$, and the image of $\ev$ generates a weakly dense $O_k$-submodule of $\rC(X,k)^{\vee}$.
\item[(3)] For any complete linear topological $O_k$-module $M$, the map
\be
\Hom_{O_k}^{\r{w}}(\rC(X,k)^{\vee},M) & \to & \rC(X,M) \\
\psi & \mapsto & \psi \circ \ev
\ee
is bijective.
\ei
\end{prp}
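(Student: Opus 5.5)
The plan is to prove the three assertions in order. The one real input is an orthonormal basis for $\rC(X,k)$ in (1). After that, (2) and (3) reduce to approximating continuous functions by locally constant ones, working against finite clopen partitions of $X$.

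\textbf{(1).} Let $\mathcal{B}$ be the Boolean algebra of clopen subsets of $X$, and let $X' \coloneqq \mathrm{Spec}\,\mathcal{B}$ be its Stone space, with the canonical continuous surjection $q \colon X \to X'$.
\begin{itemize}
\item[(a)] Every continuous $f \colon X \to k$ factors through $q$. Indeed, for each open ball $B \subset k$ the set $f^{-1}(B)$ is clopen, and $k$ is totally disconnected. Hence $q^*$ gives an isometric $k$-linear isomorphism $\rC(X',k) \to \rC(X,k)$.
\item[(b)] For the profinite space $X'$ I will invoke the standard construction of an orthonormal basis of $\rC(X',k)$ (cf.\ van Rooij, Serre). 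Well-order $\mathcal{B}$ and orthonormalise the functions $1_U$ for $U \in \mathcal{B}$. Over a non-spherically complete $k$ this needs care, so I would use the explicit basis of characteristic functions obtained by inductively refining finite clopen partitions.
\item[(c)] The resulting basis is indexed by a subset of $\mathcal{B}$, hence by a non-measurable set. So $\rC(X,k)$ is free, and $\rC(X,k)^{\vee}$ is cofree by Theorem \ref{Schikhof} (1).
\end{itemize}
I expect step (b), the existence of an orthonormal basis of cardinality at most $\# \mathcal{B}$ with no spherical completeness available, to be the main obstacle.

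\textbf{(2).} Set $M \coloneqq \rC(X,k)^{\vee}$. By Theorem \ref{Schikhof} (2) applied to the cofree $M$, the canonical homomorphism $\rC(X,k) \to M^{\vee}$ is an isomorphism. Hence the weak topology of $M$ is generated by the evaluations $\mu \mapsto \mu(f)$ for $f \in \rC(X,k)$. Continuity of $\ev$ then amounts to continuity of each map $x \mapsto f(x)$, which holds by definition of $\rC(X,k)$.

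For density, fix $\mu \in M$, finitely many $f_1,\dots,f_n \in \rC(X,k)$ and $\epsilon > 0$.
\begin{itemize}
\item[(a)] By compactness, choose a finite clopen partition $X = U_1 \sqcup \dots \sqcup U_r$ into nonempty pieces with $\v{f_j(x) - f_j(y)} \leq \epsilon$ for all $x,y$ in the same piece and all $j$.
\item[(b)] Pick $x_l \in U_l$ and set $m \coloneqq \sum_l \mu(1_{U_l}) \ev(x_l)$. The coefficients lie in $O_k$ because $\n{\mu} \leq 1$.
\item[(c)] Then $m$ and $\mu$ agree on every function locally constant on the partition. Since $\n{\mu} \leq 1$ and $\n{\ev(x_l)} \leq 1$, we get $\v{m(f_j) - \mu(f_j)} \leq \epsilon$ for every $j$.
\end{itemize}
So the $O_k$-span of $\ev(X)$ meets every basic weak neighbourhood of $\mu$.

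\textbf{(3).} Injectivity follows from (2), since $M$ is Hausdorff and each $\psi$ is continuous and $O_k$-linear. For surjectivity, let $F \in \rC(X,M)$.
\begin{itemize}
\item[(a)] For each open $O_k$-submodule $N \subset M$ the quotient $M/N$ is discrete. So $F \bmod N$ is locally constant with finitely many values, and its fibres form a finite clopen partition $\mathcal{P}_N$.
\item[(b)] Define $\psi_N(\mu) \coloneqq \sum_{U \in \mathcal{P}_N} \mu(1_U) F(x_U) \bmod N$ with $x_U \in U$. This is independent of the choices of the $x_U$ and of refinements of $\mathcal{P}_N$, because $\mu(1_U) \in O_k$.
\item[(c)] The $\psi_N$ are compatible as $N$ shrinks. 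By completeness of $M$ they define an $O_k$-linear map $\psi \colon \rC(X,k)^{\vee} \to M$.
\item[(d)] $\psi$ is weakly continuous: modulo $N$ it depends only on the finitely many values $\mu(1_U)$ with $U \in \mathcal{P}_N$, and each $1_U$ lies in $\rC(X,k) \cong M^{\vee}$.
\item[(e)] By construction $\psi(\ev(x)) = F(x)$.
\end{itemize}
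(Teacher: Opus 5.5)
Your proposal is correct and follows essentially the same route as the paper: the same finite clopen partition argument for the weak density in (2), and the same construction of the inverse in (3) from the compatible classes $\sum_{U} \mu(1_U) F(x_U) \bmod N$ over open $O_k$-submodules $N$, glued by completeness, while for (1) the paper simply cites \cite{MN89} \S 12 Theorem 7.21 where you sketch an orthonormal basis of characteristic functions. Two small corrections: the bijectivity of $\rC(X,k) \to \rC(X,k)^{\vee \vee}$ used in (2) is Theorem \ref{Schikhof} (1), not (2); and the basis in (1) has nothing to do with spherical completeness, since by N\"obeling's theorem the locally constant $\Z$-valued functions on $X'$ form a free abelian group on characteristic functions of clopen sets, which yields an orthonormal basis of $\rC(X',k)$ over any complete $k$.
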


\begin{proof}
(1) The first assertion follows from \cite{MN89} \S 12 Theorem 7.21, and the second assertion follows from Theorem \ref{Schikhof} (1).

\vs
(2) By Theorem \ref{Schikhof} (1), the canonical $k$-linear homomorphism $\rC(X,k) \to \rC(X,k)^{\vee \vee}$ is bijective. This implies the first assertion by the definition of the weak topology. By Theorem \ref{Schikhof} (1), it suffices to show that for any $(\mu,F,\epsilon) \in \rC(X,k)^{\vee} \times \cP_{< \aleph_0}(\rC(X,k)) \times \R_{> 0}$, there exists a $c \in O_k^{\oplus X}$ such that $\sup_{f \in F} \v{(\mu - \sum_{x \in X} c(x) \ev(x))(f)} \leq \epsilon$. For any $f \in \rC(X,k)$, since $X$ is compact, the coset decomposition of $X$ associated to $f$ modulo $\set{c \in k}{\v{c} \leq \epsilon}$ gives a finite clopen covering of $X$. Therefore, since $F$ is finite, there exists a finite clopen covering $\cU$ of $X$ such that for any $U \in \cU$, $\sup_{f \in F} \sup_{(x,x') \in U^2} \v{f(x) - f(x')} \leq \epsilon$. Take a section $s \colon \cU \hookrightarrow X$ of the canonical projection $p \colon X \twoheadrightarrow \cU$. We denote by $c \in O_k^{\oplus X}$ the map
\be
X & \to & O_k \\
x & \mapsto & 
\left\{
\begin{array}{ll}
\mu(1_{p(x)}) & (x \in \im(s)) \\
0 & (x \notin \im(s))
\end{array}
\right..
\ee
For any $f \in F$, we have
\be
\n{f - \sum_{x \in \im(s)} f(x) 1_{p(x)}} \leq \epsilon
\ee
by the choice of $\cU$, and hence
\be
& & \v{\left( \mu - \sum_{x \in X} c(x) \ev(x) \right)(f)} = \v{\left( \mu - \sum_{x \in \im(s)} \mu(1_{p(x)}) \ev(x) \right)(f)} = \v{\mu(f) - \sum_{x \in \im(s)} f(x) \mu(1_{p(x)})} \\
& = & \v{\mu \left( f - \sum_{x \in \im(s)} f(x) 1_{p(x)} \right)} \leq \n{f - \sum_{x \in \im(s)} f(x) 1_{p(x)}} \leq \epsilon.
\ee
This implies $\sup_{f \in F} \v{(\mu - \sum_{x \in X} c(x) \ev(x))(f)} \leq \epsilon$.

\vs
(3) We denote by $\ev^*$ the given map, and by $\cO_M$ the set of open $O_k$-submodules of $M$ directed by inclusion. We construct the inverse of $\ev^*$. Let $(\rho,\mu) \in \rC(X,M) \times \rC(X,k)^{\vee}$. For any $L \in \cO_M$, the finite sum $\sum_{x \in X_0} \mu(1_{\rho^{-1}(\rho(x) + L)}) \rho(x) + L \in M/L$ does not depend on the choice of a complete representative $X_0$ of the coset decomposition $X \twoheadrightarrow \set{\rho^{-1}(\rho(x) + L)}{x \in X}$ associated to $\rho$ modulo $L$, and we denote it by $\int \rho d \mu/L$. We have $(\int \rho d \mu/L)_{L \in \cO_M} \in \varprojlim_{L \in \cO_M} M/L$, and hence there uniquely exists an $m \in M$ such that $m + L = \int \rho d \mu/L$ for any $L \in \cO_M$ by the completeness of $M$. We denote such an $m$ by $\int \rho d \mu$.

\vs
We show that the map
\be
\int \rho \colon \rC(X,k)^{\vee} & \to & M \\
\mu & \mapsto & \int \rho d \mu.
\ee
is an $O_k$-linear homomorphism continuous with respect to the weak topology of $\rC(X,k)^{\vee}$. The $O_k$-linearity follows from the definition. Since the weak topology of $\rC(X,k)$ and the topology on $M$ are linear, it suffices to show that for any $L \in \cO_M$, $(\int \rho)^{-1}(L)$ is a neighbourhood of $0 \in \rC(X,k)^{\vee}$. Take a complete representative $X_0$ of the coset decomposition $X \twoheadrightarrow \set{\rho^{-1}(\rho(x) + L)}{x \in X}$. For each $x \in X_0$, since $L$ is a neighbourhood of $0 = 0 \rho(x)$, there exists an $\epsilon_x \in \R_{> 0}$ such that $c \rho(x) \in L$ for any $c \in O_k$ with $\v{c} \leq \epsilon_x$. For any $\mu \in \rC(X,k)^{\vee}$ with $\v{\mu(1_{\rho^{-1}(\rho(x) + L)})} \leq \epsilon_x$ for any $x \in X_0$, we have
\be
\int \rho d \mu + L = \int \rho d \mu / L = \sum_{x \in X_0} \mu(1_{\rho^{-1}(\rho(x) + L)}) \rho(x) + L = 0 + L,
\ee
and hence $\mu \in (\int \rho)^{-1}(L)$. Therefore, $(\int \rho)^{-1}(L)$ is a neighbourhood of $0$.

\vs
We show that the map
\be
\int \colon \rC(X,M) & \to & \Hom_{O_k}^{\r{w}}(\rC(X,k)^{\vee},M) \\
\rho & \mapsto & \int \rho
\ee
is the inverse of $\ev^*$. First, let $\rho \in \rC(X,M)$. We show $(\ev^* \circ \int)(\rho) = \rho$. Let $x \in X$. For any $L \in \cO_M$, we have
\be
\int \rho d (\ev(x)) + L = \int \rho d (\ev(x)) / L = \ev(x)(1_{\rho^{-1}(\rho(x) + L)}) \rho(x) + L = \rho(x) + L
\ee
by the definition of $\int \rho d (\ev(x))/L$ using the independence of the choice of a complete representative of the coset decomposition associated to $\rho$ modulo $L$. By the Hausdorffness of $M$, we obtain $\int \rho d(\ev(x)) = \rho(x)$, and hence
\be
\left( \ev^* \circ \int \right)(\rho)(x) = \int \rho d(\ev(x)) = \rho(x).
\ee
This implies $(\ev^* \circ \int)(\rho) = \rho$.

\vs
Next, let $\psi \in \Hom_{O_k}^{\r{w}}(\rC(X,k)^{\vee},M)$. We show $(\int \circ \ev^*)(\psi) = \psi$. Let $x \in X$. For any $L \in \cO_M$, we have
\be
\left( \int \circ \ev^* \right)(\psi)(\ev(x)) + L & = & \int (\psi \circ \ev) d (\ev(x)) + L = (\psi \circ \ev)(x) + L = \psi(\ev(x)) + L
\ee
by the argument above applied to $\rho = (\psi \circ \ev)$. By the Hausdorffness of $M$, we obtain $(\int \circ \ev^*)(\psi)(\ev(x)) = \psi(\ev(x))$. This implies that the restrictions of $(\int \circ \ev^*)(\psi)$ and $\psi$ to the $O_k$-submodule of $\rC(X,k)^{\vee}$ generated by the image of $\ev$ coincide with each other. By the second assertion of (2), the continuity of $(\int \circ \ev^*)(\psi)$ and $\psi$ with respect to the weak topology of $\rC(X,k)^{\vee}$, and the Hausdorffness of $M$, we obtain $(\int \circ \ev^*)(\psi) = \psi$.
\end{proof}

Let $G$ be a profinite group, and assume that the set $\cO_G$ of open normal subgroups of $G$ directed by inclusion is non-measurable. By Proposition \ref{universality of measure space} (1), $\rC(G,k)$ is free and $\rC(G,k)^{\vee}$ is cofree. We denote by $O_k[[G]]$ the Iwasawa algebra of $G$ over $O_k$, i.e.\ $\varinjlim_{H \in \cO_G} O_k[G/H]$.

\begin{prp}
\label{weak topology vs inverse limit topology}
Suppose that $k$ is non-spherically complete.
\bi
\item[(1)] The map
\be
\rC(G,k)^{\vee} & \to & O_k[[G]] \\
\mu & \mapsto & \left( \sum_{\ol{g} \in G/H} \mu(1_{\ol{g}}) \right)_{H \in \cO_G}
\ee
is an $O_k$-linear isomorphism.
\item[(2)] The weak topology of $O_k[[G]]$ coincides with the inverse limit topology.
\ei
\end{prp}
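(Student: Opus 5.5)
For (1), I will reduce to the finite levels. For each $H \in \cO_G$, the map $\mu \mapsto (\mu(1_{\ol{g}}))_{\ol{g} \in G/H}$ identifies the restriction of $\mu$ to the finite dimensional subspace $\rC(G/H,k) \subset \rC(G,k)$ with an element of $O_k^{G/H} \cong O_k[G/H]$. These elements are compatible under the projections $O_k[G/H'] \to O_k[G/H]$ for $H' \subset H$ because each coset $\ol{g} \in G/H$ is a finite disjoint union of cosets of $H'$ and $\mu$ is additive. Hence the map lands in $\varprojlim_{H} O_k[G/H]$, which is how I read $O_k[[G]]$ (the $\varinjlim$ in the text is presumably a typo for $\varprojlim$). $O_k$-linearity is clear.

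For injectivity, note that $\bigcup_H \rC(G/H,k)$, the locally constant functions, is dense in $\rC(G,k)$: a continuous $f$ on the compact space $G$ is uniformly continuous, so it is approximated within $\epsilon$ by a function constant on the cosets of some open normal subgroup. A bounded functional vanishing on a dense subspace is zero. For surjectivity, a compatible family $(a_H)_H$ defines a linear functional $\mu_0$ on the locally constant functions by $\mu_0(1_{\ol g}) = a_H(\ol g)$. Compatibility makes this well defined, and $\v{\mu_0(f)} \leq \sup \v{f}$ by the ultrametric inequality since all $a_H(\ol g) \in O_k$. So $\mu_0$ is contracting and extends uniquely by continuity to $\mu \in \rC(G,k)^{\vee}$.

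For (2), I will transport the weak topology through the isomorphism of (1). By Theorem~\ref{Schikhof} (1), $\rC(G,k)^{\vee\vee} \cong \rC(G,k)$, since $\rC(G,k)$ is free by Proposition~\ref{universality of measure space} (1). Hence the weak topology on $\rC(G,k)^{\vee}$ is generated by the evaluations $\mu \mapsto \mu(f)$ for $f \in \rC(G,k)$. The inverse limit topology is generated by the maps $\mu \mapsto \mu(1_{\ol g})$, so it is weaker. Conversely, given $f$ and $\epsilon$, I choose $H$ and a locally constant $f_0 \in \rC(G/H,k)$ with $\n{f - f_0} \leq \epsilon$. Then
\[
\set{\mu}{\forall \ol g \in G/H [\v{\mu(1_{\ol g})} \leq \epsilon/\max(1,\n{f_0})]} \subset \set{\mu}{\v{\mu(f)} \leq \epsilon},
\]
because $\v{\mu(f - f_0)} \leq \epsilon$ for contracting $\mu$. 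Since both topologies are linear, this gives equality.

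The main obstacle is the density and uniform approximation step. It needs the observation that the preimages of the $\epsilon$-balls under $f$ form a finite clopen cover of $G$, which is refined by the coset partition of some open normal subgroup. This holds because the open normal subgroups form a neighbourhood basis of the identity in a profinite group, together with compactness.
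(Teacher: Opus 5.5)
Your proof is correct and takes essentially the paper's route. In (1) the paper simply cites that every clopen cover of $G$ is refined by the coset decomposition of some $H \in \cO_G$, which is the density of locally constant functions you spell out. In (2) it likewise uses Theorem~\ref{Schikhof} (1) to write each element of $O_k[[G]]^{\vee}$ as evaluation at some $f \in \rC(G,k)$, and then makes $f$, taken modulo the ideal $\mathfrak{a}$ of elements of absolute value at most $\epsilon$, factor through $G/H$. Your normalisation by $\max(1,\n{f_0})$ also covers the case $\n{f} > 1$, which the paper's reduction of $f$ modulo $\mathfrak{a}$ passes over without comment.
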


\begin{proof}
(1) For an $H \in \cO_G$, we denote by $p_H$ the canonical projection $O_k[[G]] \twoheadrightarrow O_k[G/H]$. The assertion follows from the fact that every clopen covering of $G$ is refined by the coset decomposition associated to $p_H$ for some $H \in \cO_G$.

\vs
(2) Since both topologies are linear, it suffices to compare $O_k$-submodules in subbases of fundamental systems of neighbourhoods of $0$.

\vs
First, let $(H,\epsilon) \in \cO_G \times \R_{> 0}$ and set
\be
L \coloneqq p_H^{-1} \left( \set{m \in O_k[G/H]}{\sup_{\ol{g} \in G/H} \v{m(\ol{g})} \leq \epsilon} \right).
\ee
For each $\ol{g} \in G/H$, we denote by $\ev_{\ol{g}}$ the evaluation $O_k[G/H] \to O_k$ at $\ol{g}$. We have
\be
L = \bigcap_{\ol{g} \in G/H} (\ev_{\ol{g}} \circ p_H)^{-1}(\set{c \in k}{\v{c} \leq \epsilon}),
\ee
and hence $L$ is weakly open.

\vs
Next, let $(\psi,\epsilon) \in O_k[[G]]^{\vee} \times \R_{> 0}$, and set
\be
L \coloneqq \set{\mu \in O_k[[G]]}{\v{\psi(\mu)} \leq \epsilon}.
\ee
By Theorem \ref{Schikhof} (1), there uniquely exists an $f \in \rC(G,k)$ such that $\n{f} = \n{\psi}$ and the image of $f$ by the canonical homomorphism $\rC(G,k) \to \rC(G,k)^{\vee \vee}$ coincides with the image of $\psi$ by the dual $O_k[[G]]^{\vee} \to \rC(G,k)^{\vee \vee}$ of the $O_k$-linear homomorphism in the first assertion. Set $\mathfrak{a} \coloneqq \set{c \in O_k}{\v{c} \leq \epsilon}$. Since $f$ is continuous and $G$ is profinite, there exists an $H \in \cO_G$ such that $f$ modulo $\mathfrak{a}$ factors through the canonical projection $G \twoheadrightarrow G/H$. We denote by $\ol{f}$ the resulting map $G/H \to O_k/\mathfrak{a}$. For any $\mu \in O_k[[G]]$, we have
\be
\psi(\mu) + \mathfrak{a} = \sum_{\ol{g} \in G/H} p_H(\mu)(\ol{g}) \ol{f}(\ol{g}).
\ee
Therefore, the inverse image of $\mathfrak{a} O_k[G/H]$ by $p_H$ is contained in $L$. Therefore, $L$ is open with respect to the inverse limit topology.
\end{proof}

Through the isomorphism in Proposition \ref{weak topology vs inverse limit topology} (1), we identify $\rC(G,k)^{\vee}$ with the underlying $O_k$-module of the $O_k$-algebra $O_k[[G]]$. We are ready to formulate Schneider--Teitelbaum duality for a Banach representation over a non-spherically complete field.

\begin{dfn}
A {\it unitary Banach $k$-linear representation of $G$} is a pair $(V,\rho)$ of a free Banach $k$-vector space $V$ and a continuous map $\rho \colon G \times V \to V$ for which the underlying $k$-vector space of $V$ forms a representation of the underlying group of $G$ and the map
\be
V & \to & V \\
v & \mapsto & \rho(g,v)
\ee
is an isometry for any $g \in G$. A left $O_k[[G]]$-module $M$ is said to be {\it cofree} if $M$ is cofree as an $O_k$-module.
\end{dfn}

Let $(V,\rho)$ be a unitary Banach $k$-linear representation of $G$. By Proposition \ref{Banach Currying}, the continuous map
\be
\rho \colon G \times V \to V
\ee
corresponds to a continuous map
\be
G \to \Hom_{\leq 1}(V,V)_{\r{s}},
\ee
which is a monoid homomorphism with respect to the composition of $\Hom_{\leq 1}(V,V)_{\r{s}}$. By Proposition \ref{hom topology}, it corresponds to a continuous map
\be
G \to \Hom_{O_k}(V^{\vee},V^{\vee})_{\r{b}},
\ee
which is an anti-monoid homomorphism with respect to the composition of $\Hom_{O_k}(V^{\vee},V^{\vee})_{\r{b}}$. Composing it with the homeomorphic anti-group isomorphism
\be
G & \to & G \\
g & \mapsto & g^{-1},
\ee
we obtain a continuous monoid homomorphism. By Proposition \ref{completeness of b-topoogy} and Proposition \ref{universality of measure space}, it corresponds to a continuous $O_k$-linear homomorphism
\be
O_k[[G]] \to \Hom_{O_k}(V^{\vee},V^{\vee})_{\r{b}}
\ee
with respect to the weak topology on $O_k[[G]]$, which is an $O_k$-algebra homomorphism with respect to the multiplication given by the composition of $\Hom_{O_k}(V^{\vee},V^{\vee})_{\r{b}}$. By Proposition \ref{cofreeness implies compactoid} and Proposition \ref{dual Currying}, it corresponds to a continuous map
\be
O_k[[G]] \times V^{\vee} \to V^{\vee}
\ee
with respect to the weak topologies of $O_k[[G]]$ and $V^{\vee}$, for which $V^{\vee}$ forms a cofree left $O_k[[G]]$-module. We denote by $(V,\rho)^{\vee}$ the resulting cofree left $O_k[[G]]$-module.

\begin{thm}[Schneider--Teitelbaum duality over a non-spherically complete field]
\label{Schneider--Teitelbaum}
Suppose that $k$ is non-spherically complete. The correspondence $(V,\rho) \mapsto (V,\rho)^{\vee}$ defines a $\Mod(O_k)$-enriched contravariant equivalence between the $\Mod(O_k)$-enriched category of unitary Banach $k$-linear representations of $G$ and contracting $G$-equivariant $k$-linear homomorphisms and the $\Mod(O_k)$-enriched category of cofree left $O_k[[G]]$-modules and $O_k[[G]]$-linear homomorphisms.
\end{thm}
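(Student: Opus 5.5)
The plan is to reduce the statement to the underlying Schikhof duality (Corollary \ref{adjoint property} and the equivalence following it) and to check that the two extra structures, a unitary $G$-action on one side and a left $O_k[[G]]$-module structure on the other, correspond bijectively. There are three steps: functoriality and full faithfulness, essential surjectivity, and compatibility with the $\Mod(O_k)$-enrichment.

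\emph{Functoriality and full faithfulness.} Let $(V_0,\rho_0)$ and $(V_1,\rho_1)$ be unitary Banach representations. By Corollary \ref{adjoint property} (1), $\phi \mapsto \phi^{\vee}$ is an $O_k$-linear isomorphism $\Hom_{\leq 1}(V_0,V_1) \to \Hom_{O_k}(V_1^{\vee},V_0^{\vee})$. I will show that $\phi$ is $G$-equivariant exactly when $\phi^{\vee}$ is $O_k[[G]]$-linear.
\bi
\item[(a)] $G$-equivariance of $\phi$ means $\rho_1(g)\circ\phi=\phi\circ\rho_0(g)$ for all $g$. Applying $\vee$ and using injectivity of the map in Corollary \ref{adjoint property} (1), this is equivalent to $\phi^{\vee}$ commuting with the action of each Dirac element $g^{-1} \in O_k[[G]]$.
\item[(b)] By the construction preceding the theorem, $\mu \mapsto \mu\cdot(-)$ is a weakly continuous $O_k$-linear map $O_k[[G]] \to \Hom_{O_k}(V_i^{\vee},V_i^{\vee})_{\r{b}}$. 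The maps $\psi \mapsto \phi^{\vee}\circ\psi$ and $\psi \mapsto \psi\circ\phi^{\vee}$ are continuous for the $\r{b}$-topologies. This uses Theorem \ref{automatic continuity}: $\phi^{\vee}$ is weakly continuous, and uniform convergence is preserved under composition on either side.
\item[(c)] Hence the set of $\mu$ with $\phi^{\vee}(\mu m)=\mu\phi^{\vee}(m)$ for all $m$ is a weakly closed $O_k$-submodule of $O_k[[G]]$. It contains the image of $\ev$, so it is everything by Proposition \ref{universality of measure space} (2).
\ei
This gives full faithfulness and $O_k$-linearity on Hom-modules. Functoriality, namely $(\phi\circ\phi')^{\vee}=\phi'^{\vee}\circ\phi^{\vee}$ and identities, is immediate.

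\emph{Essential surjectivity.} Let $M$ be a cofree left $O_k[[G]]$-module. By Theorem \ref{Schikhof} (2), $V \coloneqq M^{\vee}$ is free and $M \cong V^{\vee}$. The steps are as follows.
\bi
\item[(1)] The action map $O_k[[G]] \to \Hom_{O_k}(M,M)$ is weakly continuous into $\Hom_{O_k}(M,M)_{\r{b}}$. To see this, apply Proposition \ref{dual Currying} with $M_0=M$, an algebraic compactoid by Proposition \ref{cofreeness implies compactoid}. It then suffices to know that the action $O_k[[G]] \times M \to M$ is jointly weakly continuous.
\item[(2)] Composing with $\ev \colon G \to O_k[[G]]$ (continuous by Proposition \ref{universality of measure space} (2)) and with $g\mapsto g^{-1}$ gives a continuous anti-homomorphism $G \to \Hom_{O_k}(M,M)_{\r{b}}$.
\item[(3)] Transport it via the homeomorphism of Proposition \ref{hom topology} to a continuous homomorphism $G \to \Hom_{\leq 1}(V,V)_{\r{s}}$. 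Each $\rho(g)$ is contracting with contracting inverse $\rho(g^{-1})$, hence an isometry.
\item[(4)] By Proposition \ref{Banach Currying} this gives a continuous action $G\times V\to V$, so $(V,\rho)$ is a unitary representation.
\item[(5)] $(V,\rho)^{\vee} \cong M$ as $O_k[[G]]$-modules. The two actions agree on the Dirac elements by construction, and both $O_k[[G]] \to \Hom_{O_k}(M,M)_{\r{b}}$ are weakly continuous and $O_k$-linear. Since $\Hom_{O_k}(M,M)_{\r{b}}$ is Hausdorff (it is complete by Proposition \ref{completeness of b-topoogy}), they agree by the uniqueness in Proposition \ref{universality of measure space} (3) or by density.
\ei

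\emph{Main obstacle.} The hard point is the joint weak continuity needed in step (1), which is not part of the definition of a cofree $O_k[[G]]$-module. Separate continuity in the $M$-variable is free from Theorem \ref{automatic continuity}, but the $O_k[[G]]$-variable is not. I would try to get it from automatic continuity applied to the $O_k$-linear map $O_k[[G]] \to M$, $\mu\mapsto \mu m$ for fixed $m$, which is weakly continuous since $M$ is cofree. I would then upgrade to uniformity in $m$ by mimicking the compactoid argument of Proposition \ref{dual Currying}. Concretely, for $v\in M^{\vee}$ and $\epsilon>0$, consider the $O_k$-linear map $O_k[[G]] \to \prod_{m}O_k$, $\mu \mapsto (v(\mu m))_m$. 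This is continuous coordinatewise, but uniformity requires bounding the map $\mu \mapsto \sup_{m\in M}\v{v(\mu m)}$. I would try to realise this map as an $O_k$-linear map into a cofree module: the map $\mu \mapsto v\circ\mu$ lands in $M^{\vee}\cong \rC_0(I,k)$, which is free rather than cofree. So I would instead attempt to show it is weakly continuous into its norm topology, via the closed graph type property of Proposition \ref{characterisation of weak topology} (1). If that fails, I would fall back on showing that continuity holds on $\ev(G)$, using compactness of $G$ together with the equicontinuity supplied by the compactoid property. I would then use that the $O_k$-span of $\ev(G)$ is weakly dense and that the relevant maps are uniformly continuous.
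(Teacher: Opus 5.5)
Your architecture matches the paper's. The paper's proof observes that $(V,\rho)\mapsto(V,\rho)^{\vee}$ is a composite of the bijections of Theorem \ref{Schikhof} and Propositions \ref{Banach Currying}, \ref{hom topology}, \ref{universality of measure space} and \ref{dual Currying}. This makes it an equivalence onto \emph{weakly topological} cofree left $O_k[[G]]$-modules, and the paper then cites Lemma \ref{automatic continuity of multiplication} to drop the words ``weakly topological''. Your full faithfulness argument (a)--(c) and your steps (2)--(5) are a correct and more explicit version of this, granting step (1) for steps (2)--(4). Step (5) in fact only needs weak continuity of $\mu\mapsto\mu m$ for each fixed $m$, which Theorem \ref{automatic continuity} does give.

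\textbf{The gap.} It is the one you flag, and the proposal does not close it. Step (1) asks that, for $v\in M^{\vee}$, the map $\mu\mapsto v(\mu\,\cdot)\in M^{\vee}$ be continuous from the weak topology to the norm topology; you only list strategies for this. The compactoid fallback cannot work as stated. Compactness of $G$ together with coordinatewise continuity of $g\mapsto v(g\,\cdot)$ does not force uniformly small tails in $M^{\vee}\cong\rC_0(I,k)$. In Proposition \ref{dual Currying}, compactoidness of $M$ only turns joint continuity into uniformity in $m$; it does not produce joint continuity. The paper's Lemma \ref{automatic continuity of multiplication} does not really settle this point either. It applies Theorem \ref{automatic continuity}, which concerns $O_k$-linear maps, to the multiplication $O_k[[G]]\times M\to M$, which is only bilinear, so that citation does not by itself yield joint continuity.

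\textbf{The missing idea.} What you need is a gliding hump argument that uses infinite combinations $\mu\in O_k[[G]]$. Write $M=O_k^I$ and $O_k[[G]]\cong O_k^J$, and set $\lambda_i(\mu)\coloneqq v(\mu\delta_i)$.
By Theorem \ref{Schikhof}, each $\lambda_i$ is given by some $a^{(i)}\in\rC_0(J,k)$. Moreover $v(\mu m)=\sum_{i\in I}\lambda_i(\mu)m(i)$, with $(\lambda_i(\mu))_{i\in I}\in\rC_0(I,k)$ for each $\mu$.

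Suppose distinct $i_1,i_2,\ldots$ had $\n{a^{(i_r)}}>\epsilon$. Since $\lambda_{i_r}(\mu)\to0$ for every $\mu$, in particular coordinatewise, you can choose $r_1<r_2<\cdots$ and distinct $j_s\in J$ such that $\v{a^{(i_{r_s})}(j_s)}>\epsilon$ and $\v{a^{(i_{r_s})}(j_t)}<\epsilon$ for $t\neq s$. Fix $c$ with $0<\v{c}\leq\epsilon$, and let $\mu$ be given by $\mu(j_s)=c/a^{(i_{r_s})}(j_s)$ and $\mu=0$ elsewhere. Then $\mu$ lies in $O_k^J$ and satisfies $\v{\lambda_{i_{r_s}}(\mu)}=\v{c}$ for all $s$, contradicting $\lambda_{i_{r_s}}(\mu)\to0$.

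Hence $\n{a^{(i)}}\leq\epsilon$ outside a finite $I_0\subset I$. It follows that $\sup_{m\in M}\v{v(\mu m)}\leq\epsilon$ whenever $\v{\lambda_i(\mu)}\leq\epsilon$ for all $i\in I_0$, which is a weakly open condition. This is continuity of $O_k[[G]]\to\Hom_{O_k}(M,M)_{\r{b}}$ at $0$, hence everywhere by linearity. That is exactly what your step (1) needs, and it is equivalent to joint continuity by Proposition \ref{dual Currying}. With this lemma supplied, your proof goes through.
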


In order to prove Theorem \ref{Schneider--Teitelbaum}, we prepare convention and a lemma.

\begin{dfn}
A left $O_k[[G]]$-module $M$ is said to be {\it weakly topological} if the multiplication $O_k[[G]] \times M \to M$ is continuous with respect to the weak topologies of $O_k[[G]]$ and $M$.
\end{dfn}

\begin{lmm}
\label{automatic continuity of multiplication}
Suppose that $k$ is non-spherically complete. Then every cofree left $O_k[[G]]$-module $M$ is weakly topological.
\end{lmm}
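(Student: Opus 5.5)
We plan to deduce joint continuity of the multiplication $\pi \colon O_k[[G]] \times M \to M$ from separate continuity plus the algebraic compactoid property, mirroring the second half of the proof of Proposition \ref{dual Currying}. Write $A \coloneqq O_k[[G]]$. Since $A$ is cofree (Proposition \ref{universality of measure space} (1) via Proposition \ref{weak topology vs inverse limit topology} (1)), Proposition \ref{cofreeness implies compactoid} shows that $A$ is an algebraic compactoid. Likewise $M$ is an algebraic compactoid and weakly complete.

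\emph{Separate continuity.} For fixed $a \in A$, the map $m \mapsto am$ is an $O_k$-linear endomorphism of the cofree module $M$. By Theorem \ref{automatic continuity} it is weakly continuous. For fixed $m \in M$, the map $a \mapsto am$ is an $O_k$-linear homomorphism $A \to M$ into a cofree module, so Theorem \ref{automatic continuity} again gives weak continuity.

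\emph{Joint continuity.} By bilinearity it suffices to prove continuity at $(0,0)$, because
\[ am - a_0 m_0 = (a-a_0)(m-m_0) + (a-a_0) m_0 + a_0 (m - m_0), \]
and the last two terms are handled by separate continuity. So fix a weakly open $O_k$-submodule $L' \subset M$, which we may take of the form $\set{m}{\v{v(m)} \leq \epsilon}$ for some $v \in M^{\vee}$. We must find weakly open $O_k$-submodules $U \subset A$ and $L \subset M$ with $UL \subset L'$.

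The key trick is to use that $M$ is cofree. Fix $M \cong O_k^I$, so that by Proposition \ref{characterisation of weak topology} (1) the weak topology of $M$ is the product topology. Then $v$ corresponds to some $f \in \rC_0(I,k)$ via Theorem \ref{Schikhof}, and as in the proof of Proposition \ref{characterisation of weak topology} we may replace $L'$ by the smaller submodule $\set{m}{\forall i \in I_0 [\v{m(i)} \leq \epsilon']}$ for some finite $I_0$. Hence it suffices to treat a single coordinate functional $\ev_i \colon M \to O_k$, which is $O_k$-linear.

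Consider the $O_k$-bilinear map $\beta \colon A \times M \to O_k$, $(a,m) \mapsto \ev_i(am)$. For fixed $a$, the map $\beta(a,\cdot)$ lies in $M^{\vee}$. The induced map $A \to M^{\vee}$ is $O_k$-linear, and $\n{\beta(a,\cdot)} \leq 1$. Since $M^{\vee} \cong \rC_0(I,k)$, the map is $A \to \rC_0(I,k)^{\circ}$, that is, an $O_k$-linear map from the cofree module $A$ to the closed unit ball of a free Banach space. By Corollary \ref{adjoint property} (2) (equivalently Theorem \ref{Schikhof} (2)), such a map is the restriction of the dual of an $O_k$-linear map $M^{\vee\vee} = M \to A^{\vee}$, so every $O_k$-linear map $A \to M^{\vee}$ is continuous from the weak topology of $A$ to the norm topology of $M^{\vee}$. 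Concretely, $\Hom_{O_k}(A,M^{\vee}) \cong \Hom_{O_k}(M,A^{\vee})$ by swapping arguments, and $A^{\vee}$ carries the sup norm.

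Granting that continuity, there is a weakly open $U \subset A$ with $\n{\beta(a,\cdot)} \leq \epsilon'$ for all $a \in U$. Then $\v{\ev_i(am)} \leq \epsilon'$ for every $a \in U$ and every $m \in M$, so we can even take $L = M$. This gives joint continuity at $(0,0)$.

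The main obstacle is exactly this norm continuity of an arbitrary $O_k$-linear map $\Phi \colon A \to \rC_0(I,k)^{\circ}$. We would argue it by contradiction. If it failed, there would be a sequence $a_n \to 0$ weakly with $\n{\Phi(a_n)} > \epsilon'$, that is, with coordinates $j_n$ where $\v{\Phi(a_n)(j_n)} > \epsilon'$. Using the non-measurability of the index sets, the Specker/{\L}o\'s-type reflexivity in Theorem \ref{Schikhof} (2), and in particular the fact that the $O_k$-dual of $A$ lands in $\rC_0$ and not merely in $\ell^{\infty}$, we would build a single functional in $A^{\vee}$ contradicting the $\rC_0$-decay. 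We expect this to be the delicate step. If it resists, the fallback is the compactoid argument of Proposition \ref{dual Currying}: write $A = U + \sum_{a \in F} O_k a$, use separate continuity for the finitely many $a \in F$, and control $U \times L$ by choosing $U$ through the automatic continuity of $A \to M^{\vee}$.
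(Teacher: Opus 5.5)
\textbf{There is a genuine gap: the one step that carries the content of the lemma is asserted, not proved.}

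Your reductions are sound. Theorem~\ref{automatic continuity} gives separate continuity, and bilinearity reduces joint continuity to the point $(0,0)$. Proposition~\ref{characterisation of weak topology}~(1) then reduces to a single coordinate $\ev_{i_0}$ of $M \cong O_k^I$. Joint continuity at $(0,0)$ follows once $\Phi \colon A \to \rC_0(I,k)^{\circ}$, $a \mapsto \ev_{i_0}(a \, \cdot \,)$, is continuous from the weak topology to the norm topology. But that claim is the whole content of the lemma, and your justifications for it do not work:
\begin{itemize}
\item Corollary~\ref{adjoint property}~(2) only says that transposes of maps between cofree modules are contracting. Writing $\Phi$ as a restricted transpose of $m \mapsto \ev_{i_0}(\, \cdot \, m)$ merely transposes the question. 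It gives no bound on $\sup_{m \in M} \v{\ev_{i_0}(am)}$ for $a$ in a weak neighbourhood of $0$.
\item The contradiction argument is only announced. Sequences would not suffice anyway, since the weak topology of $A$ need not be first countable.
\item The compactoid fallback is circular: after writing $A = U + \sum_{a \in F} O_k a$ you must still bound $\ev_{i_0}(UM)$, which is the original problem.
\end{itemize}
The paper's own one-line proof does not supply this step either. It applies Theorem~\ref{automatic continuity} to the multiplication on the cofree module $O_k[[G]] \times M$. Since the multiplication is $O_k$-bilinear rather than $O_k$-linear, that theorem yields only the separate continuity you already have.

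The gap can be closed by a gliding hump. Fix $A \cong O_k^J$ and put $c_i(j) \coloneqq \ev_{i_0}(\delta_j \delta_i)$ for $(i,j) \in I \times J$, with $\delta_j \in A$ and $\delta_i \in M$ via the fixed isomorphisms. By $(O_k^J)^{\vee} \cong \rC_0(J,k)$ (as in the proof of Proposition~\ref{characterisation of weak topology}~(1)), each row $c_i$ lies in $\rC_0(J,k)^{\circ}$ and $\Phi(a)(i) = \sum_{j \in J} c_i(j) a(j)$. Each column $(c_i(j))_{i \in I} = \Phi(\delta_j)$ lies in $\rC_0(I,k)$. Given $\epsilon > 0$, set $J_{\epsilon} \coloneqq \{ j \in J \mid \sup_{i \in I} \v{c_i(j)} > \epsilon \}$.

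If $J_{\epsilon}$ is finite, then $U \coloneqq \{ a \in A \mid \forall j \in J_{\epsilon} [\v{a(j)} \leq \epsilon] \}$ is weakly open. Moreover $\v{\ev_{i_0}(am)} \leq \n{\Phi(a)} \leq \epsilon$ for all $(a,m) \in U \times M$, which is exactly what you need.

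Suppose instead $J_{\epsilon}$ is infinite. Choose inductively $j_n \in J_{\epsilon}$ with $\v{c_i(j_n)} \leq \epsilon$ for every $i$ in $S_{n-1} \coloneqq \{ i \in I \mid \exists l < n [\v{c_i(j_l)} > \epsilon] \}$. The set $S_{n-1}$ is finite by column decay, and such $j_n$ exists by row decay. Then pick $i_n$ with $\v{c_{i_n}(j_n)} > \epsilon$. Automatically $i_n \notin S_{n-1}$, so $\v{c_{i_n}(j_m)} \leq \epsilon$ for all $m \neq n$, and the $i_n$ and $j_n$ are pairwise distinct. Let $a \in O_k^J$ be the characteristic function of $\{ j_n \mid n \in \N \}$. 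The strict ultrametric inequality gives $\v{\Phi(a)(i_n)} = \v{c_{i_n}(j_n)} > \epsilon$ for all $n$, contradicting $\Phi(a) \in \rC_0(I,k)$.

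This argument uses the {\L}o\'s-type identification for both $A$ and $M$, which is where non-spherical completeness and non-measurability enter.
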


\begin{proof}
By Proposition \ref{characterisation of weak topology} (1), the direct product topology of $O_k[[G]] \times M$ with respect to the weak topologies of $O_k[[G]]$ and $M$ coincides with the weak topology. Therefore, the multiplication $O_k[[G]] \times M \to M$ is continuous with respect to the weak topologies of $O_k[[G]]$ and $M$ by Theorem \ref{automatic continuity}.
\end{proof}

\begin{proof}[Proof of Theorem \ref{Schneider--Teitelbaum}]
By the construction, the correspondence $(V,\rho) \to (V,\rho)^{\vee}$ defines a contravariant functor from the $\Mod(O_k)$-enriched category of unitary Banach $k$-linear representations of $G$ and contracting $G$-equivariant $k$-linear homomorphisms and the $\Mod(O_k)$-enriched category of weakly topological cofree left $O_k[[G]]$-modules and $O_k[[G]]$-linear homomorphisms, which is a composite of categorical equivalence. Therefore, the assertion immediately follows from Lemma \ref{automatic continuity of multiplication}.
\end{proof}

\subsection{Weak Irreducibility}
\label{Weak Irreducibility}

As an application of Theorem \ref{Schneider--Teitelbaum}, we give a criterion of a weak variant of irreducibility of a unitary representation of a profinite group.

\begin{dfn}
A subset of a unitary Banach $k$-linear representation $(V,\rho)$ of $G$ is said to be a {\it subrepresentation} if it is a $G$-stable $k$-linear subspace of $(V,\rho)$. A left $O_k[[G]]$-submodule of a left $O_k[[G]]$-module $M$ is said to be {\it generically weakly closed pure} if it is generically weakly closed and pure as an $O_k$-submodule.
\end{dfn}

\begin{dfn}
A unitary Banach $k$-linear representation of $G$ is said to be {\it irreducible} (resp.\ {\it weakly irreducible}) if it admits precisely two closed (resp.\ weakly closed) subrepresentations. A left $O_k[[G]]$-module is said to be {\it generically weakly simple} if it admits precisely two generically weakly closed pure left $O_k[[G]]$-submodules of $M$.
\end{dfn}

By the first sentence of Example \ref{weak closedness vs closedness}, irreducibility of a unitary representation of $G$ implies weak irreducibility. Although both of irreducibility and weak irreducibility are topological notions, the latter one can be interpreted into the purely algebraic notion of weak generic simplicity through the following:

\begin{thm}
\label{weak irreducibility criterion}
Suppose that $k$ is non-spherically complete. For any unitary Banach $k$-linear representation $(V,\rho)$ of $G$, $(V,\rho)$ is weakly irreducible if and only if $(V,\rho)^{\vee}$ is generically weakly simple.
\end{thm}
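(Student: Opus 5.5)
The plan is to deduce the criterion from the order-reversing bijection of Corollary \ref{perp equivalence} (1), applied to the free Banach $k$-vector space $V$. Since $k$ is non-spherically complete, $\v{k}$ is dense in $\R_{\geq 0}$, so the corollary applies. It gives mutually inverse anti-order-preserving maps $\perp$ between $\WC(V)$ and $\WC(V^{\vee})$, the latter being the set of generically weakly closed pure $O_k$-submodules of $V^{\vee}$. It then suffices to show that under this bijection, $G$-stable subspaces $W \in \WC(V)$ correspond exactly to left $O_k[[G]]$-submodules $L \in \WC(V^{\vee})$. Once this is established, the number of weakly closed subrepresentations of $(V,\rho)$ equals the number of generically weakly closed pure left $O_k[[G]]$-submodules of $(V,\rho)^{\vee}$. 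In particular one count is exactly two if and only if the other is, which is the assertion.

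For the direction from $V$ to $V^{\vee}$, take a $G$-stable $W \in \WC(V)$. By construction of $(V,\rho)^{\vee}$, each $g \in G$ acts on $m \in V^{\vee}$ by $g \cdot m = m \circ \rho(g^{-1},\cdot)$. Hence $m \in W^{\perp}$ implies $g \cdot m \in W^{\perp}$, so $W^{\perp}$ is stable under $O_k[G]$, the $O_k$-span of the image of $G$. To pass to all of $O_k[[G]]$, I would use the following facts. The image of $G$, i.e.\ of $\ev$, generates a weakly dense $O_k$-submodule of $O_k[[G]] = \rC(G,k)^{\vee}$ (Proposition \ref{universality of measure space} (2)). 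The multiplication $O_k[[G]] \times V^{\vee} \to V^{\vee}$ is weakly continuous (Lemma \ref{automatic continuity of multiplication}). Finally, $W^{\perp}$ is weakly closed in $V^{\vee}$ (Proposition \ref{topology of perp} (1)). For fixed $m \in W^{\perp}$, the map $\mu \mapsto \mu m$ is continuous and sends a dense submodule into the closed set $W^{\perp}$, so $O_k[[G]] m \subset W^{\perp}$.

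For the converse, take a left $O_k[[G]]$-submodule $L \in \WC(V^{\vee})$ and set $W = L^{\perp} \subset V$, identifying $V$ with $V^{\vee\vee}$ via Theorem \ref{Schikhof} (1). For $v \in W$, $g \in G$ and $m \in L$ we have $m(\rho(g,v)) = (g^{-1} \cdot m)(v) = 0$ because $g^{-1} \cdot m \in L$, so $\rho(g,v) \in W$. Bijectivity of $\perp$ then gives $W^{\perp} = L$. In the same way, a $G$-stable $W$ satisfies $W = W^{\perp\perp}$ by Proposition \ref{double perp} (1), so the two restricted maps are mutually inverse bijections.

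The main obstacle is the density and continuity step used to extend stability from $G$ to $O_k[[G]]$. There one must check that the weak topology of $O_k[[G]]$ used in Proposition \ref{universality of measure space} (2) is the one making the module action continuous; by Proposition \ref{weak topology vs inverse limit topology} (2) it is the inverse limit topology. One must also check that the $O_k[[G]]$-action defined through the chain of Currying equivalences actually restricts on $G$ to $m \mapsto m \circ \rho(g^{-1},\cdot)$. The remaining issue is bookkeeping: "precisely two" refers to $\{0,V\}$ on one side and $\{0,V^{\vee}\}$ on the other, and these are interchanged by $\perp$.
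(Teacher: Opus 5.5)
Your proposal is correct and follows the paper's route: Lemma \ref{equivariant perp equivalence} combines Corollary \ref{perp equivalence} (1) with the observation that $\perp$ preserves $G$-stability, and then concludes from $\#\WC(V,\rho) = \#\WC((V,\rho)^{\vee})$. The only difference is in upgrading $G$-stability of a weakly closed submodule to $O_k[[G]]$-stability. You cite the weak density of the span of $\ev(G)$ from Proposition \ref{universality of measure space} (2), whereas the paper's Lemma \ref{weak integrability of an action} approximates $\mu$ by $\sum_{g \in G_0} p_H(\mu)(gH)[g]$ modulo $\ker(p_H)$; this is the same density-plus-continuity argument, and, as in yours, only continuity of $\mu \mapsto \mu m$ at the fixed $m$ is actually used.
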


In order to prove Theorem \ref{weak irreducibility criterion}, we prepare convention and lemmata.

\begin{lmm}
\label{weak integrability of an action}
Suppose that $k$ is non-spherically complete. Let $M$ be a cofree left $O_k[[G]]$-module. For any weakly closed $O_k$-submodule $L \subset M$, $L$ is a left $O_k[[G]]$-submodule if and only if $L$ is $G$-stable.
\end{lmm}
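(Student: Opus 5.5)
The forward direction is immediate. The image of $G$ in $O_k[[G]]$ consists of elements of $O_k[[G]]$, namely the Dirac measures $\ev(g)$, so every left $O_k[[G]]$-submodule is $G$-stable. The real content is the converse. Assume $L \subset M$ is a weakly closed $G$-stable $O_k$-submodule. The plan is to show that for each fixed $m \in L$, the set
\be
A_m \coloneqq \set{\mu \in O_k[[G]]}{\mu m \in L}
\ee
is all of $O_k[[G]]$. This would be done in three steps.

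First, $A_m$ is an $O_k$-submodule of $O_k[[G]]$. This holds because the multiplication $O_k[[G]] \times M \to M$ is $O_k$-bilinear and $L$ is an $O_k$-submodule.

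Second, $A_m$ is weakly closed in $O_k[[G]]$. By Lemma \ref{automatic continuity of multiplication}, $M$ is weakly topological. Hence the map $O_k[[G]] \to M$, $\mu \mapsto \mu m$, obtained by restricting the multiplication to $O_k[[G]] \times \ens{m}$, is weakly continuous. Alternatively, Theorem \ref{automatic continuity} applies directly, since this map is $O_k$-linear into the cofree module $M$. Therefore $A_m$, being the preimage of the weakly closed set $L$, is weakly closed.

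Third, $A_m$ contains $\ev(g)$ for every $g \in G$, because $\ev(g) m = g m \in L$ by $G$-stability. So $A_m$ contains the $O_k$-submodule generated by $\ev(G)$. By Proposition \ref{universality of measure space} (2), this submodule is weakly dense in $\rC(G,k)^{\vee}$. We identify $\rC(G,k)^{\vee}$ with $O_k[[G]]$ via Proposition \ref{weak topology vs inverse limit topology} (1); under this identification the weak topologies match, since the isomorphism is $O_k$-linear between cofree modules and Theorem \ref{automatic continuity} gives continuity in both directions. Hence $A_m$ is a weakly closed, weakly dense subset, so $A_m = O_k[[G]]$. Thus $O_k[[G]] L \subset L$, and $L$ is a left $O_k[[G]]$-submodule.

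The main obstacle is bookkeeping rather than a new idea. One has to check that the $O_k[[G]]$-action on $M$ restricted to Dirac measures really is the given $G$-action. For $M = (V,\rho)^{\vee}$ this is built into the construction, with the inversion $g \mapsto g^{-1}$ already absorbed. For a general cofree left $O_k[[G]]$-module, we simply define the $G$-action by $g m \coloneqq \ev(g) m$, so the identification holds by definition. One also has to make sure the density statement transfers correctly across the identification of $\rC(G,k)^{\vee}$ with $O_k[[G]]$, which is handled by the weak-topology comparison noted in the third step.
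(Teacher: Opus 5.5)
Your proof is correct and is essentially the paper's argument: both combine weak continuity of $\mu \mapsto \mu m$, weak density of the $O_k$-span of the elements $[g]$ ($g \in G$) in $O_k[[G]]$, and weak closedness of $L$. The differences are only in which results you invoke. The paper gets the continuity from Proposition~\ref{dual Currying} and Lemma~\ref{automatic continuity of multiplication}, and proves the density by hand through the inverse-limit description in Proposition~\ref{weak topology vs inverse limit topology}~(2). You instead get continuity from Theorem~\ref{automatic continuity} applied to the $O_k$-linear map $\mu \mapsto \mu m$, which needs only separate continuity, and you quote the density from Proposition~\ref{universality of measure space}~(2).
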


\begin{proof}
The inverse implication is obvious. Assume that $L$ is $G$-stable. We show $\mu m \in L$ for any $(\mu,m) \in O_k[[G]] \times L$. Since $L$ is weakly closed, it suffices to show $\mu m \in L + L'$ for any weakly open $O_k$-submodule $L' \subset M$. For each $H \in \cO_G$, we denote by $p_H$ the canonical projection $O_k[[G]] \twoheadrightarrow O_k[G/H]$.

\vs
By Proposition \ref{dual Currying} and Lemma \ref{automatic continuity of multiplication}, the map $O_k[[G]] \to \Hom_{O_k}(M,M)_{\r{b}}$ associated to the multiplication is continuous with respect to the weak topology of $O_k[[G]]$. Therefore, by Proposition \ref{weak topology vs inverse limit topology} (2), there exists an $(H,\epsilon) \in \cO_G \times \R_{> 0}$ such that $\mu' m' \in L'$ for any $(\mu',m') \in O_k[[G]] \times M$ with $\sup_{\ol{g} \in G/H} \v{p_H(\mu')(\ol{g})} \leq \epsilon$. In particular, we have $\ker(p_H) m' \subset L'$ for any $m' \in M$. Take a complete representative $G_0$ of the canonical projection $G \twoheadrightarrow G/H$. By the $G$-stability of $L$ and
\be
\mu - \sum_{g \in G_0} p_H(\mu)(gH) [g] \in \ker(p_H),
\ee
we obtain
\be
\mu m \in \sum_{g \in G_0} p_H(\mu)(gH) [g] m + \ker(p_H) m \subset L + L'.
\ee
\end{proof}

For a unitary Banach $k$-linear representation $(V,\rho)$ of $G$, we denote by $\WC(V,\rho)$ the set of weakly closed subrepresentations of $(V,\rho)$ directed by inclusion. For a left $O_k[[G]]$-module $M$, we denote by $\WC(M)$ the set of generically weakly closed pure left $O_k[[G]]$-submodules directed by inclusion.

\begin{lmm}
\label{equivariant perp equivalence}
Suppose that $k$ is non-spherically complete. Let $(V,\rho)$ be a unitary Banach $k$-linear representation of $G$. The correspondences $\perp$ define anti-order-preserving maps between $\WC(V,\rho)$ and $\WC((V,\rho)^{\vee})$ which are inverses to each other.
\end{lmm}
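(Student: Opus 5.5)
The plan is to reduce to Corollary \ref{perp equivalence} (1) and check that the bijection $\perp$ between $\WC(V)$ and $\WC(V^{\vee})$ respects $G$-stability on one side and $O_k[[G]]$-stability on the other. Set $M \coloneqq (V,\rho)^{\vee}$. By construction the action of $g \in G$ on $M$ is $m \mapsto m \circ \rho(g^{-1},\cdot)$, so $(gm)(v) = m(g^{-1}v)$. Since $\WC(V,\rho) \subset \WC(V)$ and $\WC(M) \subset \WC(V^{\vee})$, and $\perp$ on these is mutually inverse and anti-order-preserving, it suffices to show two things. First, for $W \in \WC(V,\rho)$, $W^{\perp}$ is a left $O_k[[G]]$-submodule. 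Second, for $L \in \WC(M)$, $L^{\perp}$ is $G$-stable. Here $L^{\perp}$ is formed in $V^{\vee\vee}$, which we identify with $V$ through Theorem \ref{Schikhof} (1), as in Corollary \ref{perp equivalence}.

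For the first point, $G$-stability of $W^{\perp}$ is immediate: if $m|_W = 0$ and $W$ is $G$-stable, then $(gm)(w) = m(g^{-1}w) = 0$. By Proposition \ref{topology of perp} (1), $W^{\perp}$ is a weakly closed $O_k$-submodule of $V^{\vee}$. Lemma \ref{weak integrability of an action} then upgrades $G$-stability to $O_k[[G]]$-stability. This step needs weak closedness in $M$ itself, and not just generic weak closedness, which is exactly what Proposition \ref{topology of perp} (1) supplies.

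For the second point, let $L \in \WC(M)$, let $v \in L^{\perp}$ (so $m(v) = 0$ for all $m \in L$), and let $g \in G$. For $m \in L$ we have $g^{-1} m \in L$ since $L$ is an $O_k[[G]]$-submodule, so
\[
m(gv) = (g^{-1}m)(v) = 0 .
\]
Hence $gv \in L^{\perp}$. Since $L^{\perp}$ is a $k$-linear subspace, it is a subrepresentation. It is weakly closed because it lies in $\WC(V)$ by Corollary \ref{perp equivalence} (1).

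The main obstacle is bookkeeping of identifications rather than analysis. I must make sure that the pairing between $V$ and $V^{\vee}$ used in Corollary \ref{perp equivalence} agrees with the one through which $G$ acts, and that the $g \mapsto g^{-1}$ twist in the definition of $(V,\rho)^{\vee}$ is handled consistently. Once these agree, the order-reversal and mutual inverseness are inherited directly from Corollary \ref{perp equivalence} (1).
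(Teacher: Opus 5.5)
Your proof is correct and follows the paper's own argument. You restrict the bijection of Corollary~\ref{perp equivalence} (1), check via $(gm)(v) = m(g^{-1}v)$ that $\perp$ preserves $G$-stability in both directions, and use Lemma~\ref{weak integrability of an action} to upgrade $G$-stability of $W^{\perp}$ to $O_k[[G]]$-stability; your explicit appeal to Proposition~\ref{topology of perp} (1) for the weak closedness of $W^{\perp}$ in $V^{\vee}$ just spells out a hypothesis the paper leaves implicit.
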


\begin{proof}
By the definition of $\vee$ and $\perp$, $\perp$ preserves $G$-stability. Therefore, the assertion follows from Corollary \ref{perp equivalence} (1) and Lemma \ref{weak integrability of an action}.
\end{proof}

\begin{proof}[Proof of Theorem \ref{weak irreducibility criterion}]
Lemma \ref{equivariant perp equivalence} implies $\# \WC(V,\rho) = \# \WC((V,\rho)^{\vee})$, which implies the assertion.
\end{proof}

\section{Example}
\label{Example}

Let $p$ be a prime number. We give two $p$-adic families of infinite dimensional weakly irreducible unitary Banach $\Cp$-linear representations of a $p$-adic Lie group. Both are quite similar to families in preceding studies over finite extensions of $\Qp$, but are novel by the weak irreducibility over $\Cp$. For a $(U,i,j) \in \GL_2(\Zp) \times \ens{0,1}^2$, we denote by $U_{i,j} \in \Zp$ the $(1+i,1+j)$-entry of $U$.

\subsection{Symmetric Product of Complex $p$-adic Weight}
\label{Symmetric Product of Complex p-adic Weight}

First, we construct a $p$-adic family interpolating symmetric products in a way similar to one in \cite{Mih15} \S 3. We denote by $\rho$ the map
\be
\GL_2(\Zp) \times \Cp^2 & \to & \Cp^2 \\
(U,v) & \mapsto &
\left(
\begin{array}{c}
U_{0,0} v(0) + U_{0,1} v(1) \\
U_{1,0} v(0) + U_{1,1} v(1) \\
\end{array}
\right),
\ee
i.e.\ the scalar extension to $\Cp$ of the canonical representation of $\GL_2(\Zp)$. We set $X \coloneqq (1,0) \in \Cp^2$ and $Y \coloneqq (0,1) \in \Cp^2$ in order to identify $\Cp^2$ with the $\Cp$-linear subspace of two-variable polynomials consisting of homogeneous polynomials of degree $1$.

\vs
For an $n \in \N$, we denote by $\Sym^n(\rho)$ the $n$-th symmetric product $\GL_2(\Zp) \times \Sym^n_{\Cp}(\Cp^2) \to \Sym^n_{\Cp}(\Cp^2)$ of $\rho$. By the definition, we have
\be
U X^{n-j} Y^j & = & (U_{0,0} X + U_{1,0} Y)^{n-j} (U_{0,1} X + U_{1,1} Y)^j \\
& = & \sum_{i=0}^{n} \left( \sum_{h = \max \ens{0,i+j-n}}^{\min \ens{i,j}} \binom{n-j}{i-h} \binom{j}{h} U_{0,0}^{n-i-j+h} U_{1,0}^{i-h} U_{0,1}^{j-h} U_{1,1}^{h} \right) X^{n-i} Y^i
\ee
for any $(n,j,U) \in \N \times \N_{\leq n} \times \GL_2(\Zp)$. We interpolate this formula to obtain an infinite family of infinite dimensional unitary Banach $\Cp$-linear representations. We set
\be
B \coloneqq \set{U \in \GL_2(\Zp)}{U_{1,0} \in p \Zp}.
\ee
By $\# \cO_B = \aleph_0$, duality theory in \S \ref{Schneider--Teitelbaum Duality} is applicable to unitary Banach $k$-linear representations of $B$. We denote by $W$ the group of continuous characters $\Zp^{\times} \to \Cp^{\times}$. We employ the additive convention for $W$, and identify the additive group $\Z$ with its image by the natural embedding $\Z \hookrightarrow W$. For each $(n,u) \in W \times \Zp^{\times}$, we abbreviate $n(u)$ to $u^n$. We set $s_p \coloneqq 2$ and $q_p \coloneqq 4$ if $p = 2$, and $s_p \coloneqq p-1$ and $q_p \coloneqq p$ otherwise. For each $n \in W$, we denote by $n^{(p)} \in \Z/s_p \Z$ the exponent corresponding to the restriction of $n$ to $\set{u \in \Zp^{\times}}{u^{s_p} = 1}$, and by $n_p \in O_{\Cp}$ the exponent corresponding to the restriction of $n$ to $1 + q_p \Zp$.

\begin{dfn}
Let $f$ be a function $W \to \Cp$. We say that $f$ is {\it rigid analytic} if there exists an $(f_r)_{r \in \Z/s_p \Z} \in \Cp \ens{T}^{\Z/s_p \Z}$ such that for any $n \in W$, the equality $f(n) = f_{n^{(p)}}(n_p)$ holds. We denote by $A_W$ the $\Cp$-algebra of rigid analytic functions on $W$.
\end{dfn}

We equip $A_W$ with supremum norm. Since the supremum norm of an $f \in A_W$ coincides with the maximum of Gauss norms of $f_r$'s for the corresponding $(f_r)_{r \in \Z/s_p \Z}$, $A_W$ forms a Banach $\Cp$-algebra. For each $(z,i) \in \Cp \times \N$, we set
\be
\binom{z}{i} \coloneqq \frac{1}{i!} \prod_{h=0}^{i-1} (z - h) \in \Cp.
\ee
For each $(n,i) \in W \times \N$, we set
\be
\binom{n}{i} \coloneqq \binom{n_p}{i} \in \Cp.
\ee
For any $i \in \N$, $\binom{n}{i}$ is rigid analytic on $n \in W$ because it is defined by a polynomial on $n_p$. By Legendre's formula, we have
\be
\v{\binom{n}{i}} \ 
\left\{
\begin{array}{ll}
= 1 & (i = 0) \\
< \v{p}^{- \frac{i}{p-1}} & (i > 0)
\end{array}
\right.
\ee
for any $(n,i) \in W \times \N$. In particular, for any $i \in \N$, the norm of the rigid analytic function $\binom{n}{i}$ on $n \in W$ is $1$ if $i = 0$ and is bounded by $\v{p}^{- \frac{i}{p-1}}$ otherwise.

\vs
For each $(n,i,j) \in W \times \N^2$, we define a map $\rho_{n,i,j} \colon B \to \Cp$ as
\be
\rho_{n,i,j}(U) \coloneqq \sum_{h=0}^{\min \ens{i,j}} \binom{n-j}{i-h} \binom{j}{h} U_{0,0}^{n-i-j+h} U_{0,1}^{j-h} U_{1,0}^{i-h} U_{1,1}^{h} \in \Cp.
\ee
For any $(n,i,j,h,U) \in W \times \N^3 \times B$ with $h \leq \min \ens{i,j}$, we have
\be
\v{\binom{n-j}{i-h} U_{1,0}^{i-h}} < \v{p}^{- \frac{i-h}{p-1}} \times \v{p}^{i-h} = \v{p}^{(1-\frac{1}{p-1})(i-h)}
\ee
Since the image of $n$ is contained in $O_{\Cp}^{\times}$, we have
\be
\v{\rho_{n,i,j}(U)} \ 
\left\{
\begin{array}{ll}
\leq 1 & (i \leq j) \\
< \v{p}^{(1-\frac{1}{p-1})(i-j)} & (i > j)
\end{array}
\right.
\ee
for any $(n,i,j,U) \in W \times \N^2 \times B$. In particular, for any $(i,j,U) \in \N^2 \times B$, the norm of the rigid analytic function $\rho_{n,i,j}(U)$ on $n \in W$ is bounded by $1$ if $i \leq j$ and by $\v{p}^{(1-\frac{1}{p-1})(i-j)}$ otherwise.

\begin{prp}
\label{rho_n is well-defined}
Let $(n,U,v) \in W \times B \times \rC_0(\N,\Cp)$.
\bi
\item[(1)] For any $i \in \N$, the infinite sum $\sum_{j \in \N} \rho_{n,i,j}(U) v(j)$ converges to a $v'_i \in \Cp$ with $\v{v'_i} \leq \n{v}$.
\item[(2)] For any $\epsilon \in \R_{> 0}$, there exists an $i_0 \in \N$ independent of $n$ and $U$ such that $\sup_{i \in \N_{\geq i_0}} \v{v'_i} < \epsilon$.
\ei
\end{prp}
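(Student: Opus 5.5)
The whole argument rests on the uniform bound
\be
\v{\rho_{n,i,j}(U)} \leq
\left\{
\begin{array}{ll}
1 & (i \leq j) \\
\v{p}^{(1-\frac{1}{p-1})(i-j)} & (i > j)
\end{array}
\right.
\ee
established just before the statement, which holds for all $(n,U) \in W \times B$. Write $r \coloneqq \v{p}^{1-\frac{1}{p-1}}$. For $p \geq 3$ we have $r < 1$, since $\v{p} < 1$ and $1 - \frac{1}{p-1} > 0$. The case $p = 2$, where the exponent vanishes, needs separate care (see the last paragraph). In all cases every coefficient $\rho_{n,i,j}(U)$ lies in $O_{\Cp}$.

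For (1), I fix $i$. Since $v \in \rC_0(\N,\Cp)$, we have $\v{v(j)} \to 0$, so $\v{\rho_{n,i,j}(U) v(j)} \leq \v{v(j)} \to 0$. In a complete ultrametric field a series whose terms tend to $0$ converges, so $v'_i$ exists. The ultrametric inequality, passed to the limit of partial sums, then gives $\v{v'_i} \leq \sup_j \v{\rho_{n,i,j}(U)} \v{v(j)} \leq \n{v}$.

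For (2), I take $\epsilon > 0$ and split the sum for $v'_i$ at an index $j_0$ chosen so that $\v{v(j)} < \epsilon$ for $j \geq j_0$. This $j_0$ depends only on $v$, not on $n$ or $U$.
\bi
\item[(a)] The tail $\sum_{j \geq j_0}$ has absolute value at most $\sup_{j \geq j_0} \v{v(j)}$, which is $< \epsilon$ provided the supremum is attained; in any case it is $\leq \epsilon' < \epsilon$ if $j_0$ is chosen for some $\epsilon' < \epsilon$.
\item[(b)] The head $\sum_{j < j_0}$ involves only $j < j_0 \leq i$ once $i \geq j_0$. There $\v{\rho_{n,i,j}(U)} \leq r^{i-j} \leq r^{i-j_0+1}$, so the head is bounded by $r^{i-j_0+1} \n{v}$.
\ei
I then choose $i_0 \geq j_0$ with $r^{i_0-j_0+1} \n{v} < \epsilon$, which is possible because $r < 1$. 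This $i_0$ depends only on $v$ and $\epsilon$, giving the uniformity in $n$ and $U$.

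The main obstacle is $p = 2$, where $r = 1$ and the displayed bound gives no decay. There I would revisit the estimate using $q_2 = 4$: on $B$ with $p = 2$ one has $U_{1,0} \in 2\Z_2$, and $n_p$ is the exponent on $1 + 4\Z_2$. I would try to improve the bound on $\binom{n-j}{i-h}$ to obtain $\v{\binom{n-j}{i-h} U_{1,0}^{i-h}} \leq c^{\,i-h}$ for some $c < 1$; the Legendre bound $\v{i!}^{-1} < \v{2}^{-i}$ is not enough, so this needs a sharper estimate. If no uniform $c < 1$ is available, I would use the strict inequality in the bound together with a compactness argument. With any such geometric decay in $i - j$, the argument for (2) goes through verbatim.
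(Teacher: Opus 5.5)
For odd $p$ your argument is the paper's argument. The paper also splits the series at $j_0$ with $\sup_{j\ge j_0}\v{v(j)}<\epsilon$. It bounds the tail by that supremum and the head by $\v{p}^{(1-\frac{1}{p-1})(i-j)}\n{v}$, and takes $i_0=j_0+j_1$, which is your choice of $i_0$. Part (1) is fine for every $p$, since $\v{\rho_{n,i,j}(U)}\le 1$ always.

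The gap is the case $p=2$. Your proposed repairs cannot succeed, because assertion (2) is false there. Set up a counterexample as follows:
\begin{itemize}
\item Let $n\in W$ be trivial on $\{\pm1\}$ and equal to $u\mapsto\exp(\zeta_3\log u)$ on $1+4\Z_2$. This converges because $\v{\zeta_3\log u}\le\v{4}<\v{2}$, and it gives $n_p=\zeta_3\in O_{\Cp}$.
\item Let $U=F$, so $U_{0,0}=1$ and $U_{1,0}=2$.
\item Let $v=\delta_0$.
\end{itemize}
Then $v'_i=\rho_{n,i,0}(F)=\binom{\zeta_3}{i}2^i$. Each factor $\zeta_3-h$ with $h\in\Z$ is a unit, since its residue lies in $\mathbb{F}_4\setminus\mathbb{F}_2$. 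Hence $\v{v'_i}=\v{2}^{\,i}\,\v{i!}^{-1}=\v{2}^{\,i-v_2(i!)}$. For $i=2^k$, Legendre's formula gives $v_2((2^k)!)=2^k-1$, so $\v{v'_{2^k}}=\v{2}$ for every $k$.

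Consequently $\sup_{i\ge i_0}\v{v'_i}\ge\v{2}$ for every $i_0$, already for this single pair $(n,U)$; in particular $\rho_n(F,\delta_0)\notin\rC_0(\N,\Cp)$. So the Legendre bound you judged insufficient is essentially sharp at $i=2^k$, and no uniform ratio $c<1$ exists. A compactness argument cannot help either, because the failure occurs at one fixed $n$.

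The paper's proof breaks silently at the same spot. For $p=2$ one has $\v{p}^{1-\frac{1}{p-1}}=1$, so its $j_1$ does not exist once $\n{v}\ge\epsilon$. What is needed is an extra hypothesis rather than a sharper estimate: either assume $p$ odd, or for $p=2$ replace $B$ by the subgroup with $U_{1,0}\in 4\Z_2$. In the latter case $\v{\binom{n-j}{i-h}U_{1,0}^{i-h}}<\v{2}^{-(i-h)}\v{4}^{i-h}=\v{2}^{i-h}$, and your argument goes through verbatim with $r=\v{2}$.
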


\begin{proof}
The first assertion follows from
\be
\v{\rho_{n,i,j}(U) v(j)} = \v{\rho_{n,i,j}(U)} \ \v{v(j)} \leq \v{v(j)}
\ee
for any $i \in \N$. Let $\epsilon \in \R_{> 0}$. By $v \in \rC_0(\N,\Cp)$, there exists a $j_0 \in \N$ such that $\sup_{j \in \N_{\geq j_0}} \v{v(j)} < \epsilon$. Let $j_1 \in \N$ denote the least integer $j$ satisfying $\v{p}^{(1-\frac{1}{p-1})j} \n{v} < \epsilon$. For any $i \in \N_{\geq j_0 + j_1}$, we have
\be
\v{v'_i} & \leq & \max \ens{\v{\sum_{j \in \N_{< j_0}} \rho_{n,i,j}(U) v(j)},\v{\sum_{j \in \N_{\geq j_0}} \rho_{n,i,j}(U) v(j)}} \\
& \leq & \max \ens{\sup_{j \in \N_{< j_0}} \v{\rho_{n,i,j}(U)} \ \v{v(j)}, \sup_{j \in \N_{\geq j_0}} \v{\rho_{n,i,j}(U)} \ \v{v(j)}} \\
& \leq & \max \ens{\sup_{j \in \N_{< j_0}} \v{p}^{(1-\frac{1}{p-1})(i-j)} \ \n{v}, \sup_{j \in \N_{\geq j_0}} \v{v(j)}} \\
& < & \epsilon.
\ee
\end{proof}

For an $n \in W$, we denote by $\rho_n$ the map
\be
B \times \rC_0(\N,\Cp) & \to & \rC_0(\N,\Cp) \\
(U,v) & \mapsto & \left( \sum_{j \in \N} \rho_{n,i,j}(U) v(j) \right)_{i \in \N},
\ee
which makes sense by Proposition \ref{rho_n is well-defined}.

\begin{prp}
\label{rigid analytic property of rho_n}
Let $(U,v) \in B \times \rC_0(\N,\Cp)$.
\bi
\item[(1)] For any $i \in \N$, $\rho_n(U,v)(i)$ is a rigid analytic function on $n \in W$ of norm $\leq \n{v}$.
\item[(2)] For any $\epsilon \in \R_{> 0}$, there exists an $i_0 \in \N$ independent of $U$ such that for any $n \in W$, the inequality $\sup_{i \in \N_{\geq i_0}} \n{\rho_n(U,v)(i)} < \epsilon$ hold.
\ei
\end{prp}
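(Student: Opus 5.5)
We read $\rho_n(U,v)(i) = \sum_{j \in \N} \rho_{n,i,j}(U) v(j)$ as a function of $n \in W$ with $(U,v)$ fixed, and show it is a uniform limit in $A_W$ of rigid analytic functions. The plan is to treat each summand $n \mapsto \rho_{n,i,j}(U) v(j)$ as an element of $A_W$, bound its supremum norm, and use completeness of $A_W$ under the supremum norm.

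\textbf{Analyticity of each summand.} Fix $(i,j)$. In $\rho_{n,i,j}(U)$, the factors $\binom{j}{h}$, $U_{0,1}^{j-h}$, $U_{1,0}^{i-h}$ and $U_{1,1}^h$ do not depend on $n$. The factor $\binom{n-j}{i-h}$ is a polynomial in $(n-j)_p = n_p - j_p$, hence in $n_p$. It remains to treat $U_{0,0}^{n-i-j+h}$. Since $U \in B$ and $U_{1,0} \in p\Zp$, we have $U_{0,0} \in \Zp^{\times}$. Write $U_{0,0} = \omega(U_{0,0})\langle U_{0,0}\rangle$ with $\langle U_{0,0}\rangle \in 1 + q_p\Zp$ and $\omega(U_{0,0})^{s_p} = 1$. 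Then
\[
U_{0,0}^{n} = \omega(U_{0,0})^{n^{(p)}} \exp\bigl(n_p \log \langle U_{0,0}\rangle\bigr),
\]
which is the entire power series $\sum_{m} \binom{n_p}{m}(\langle U_{0,0}\rangle - 1)^m$. This series converges on the closed unit disc because $\v{\langle U_{0,0}\rangle - 1} \leq \v{q_p}$ and $\v{\binom{z}{m}} < \v{p}^{-m/(p-1)}$ for $z \in O_{\Cp}$, as the paper notes. So on each component $r \in \Z/s_p\Z$ it lies in $\Cp\{T\}$. Multiplying by the integer power $U_{0,0}^{-i-j+h}$ keeps it in $A_W$. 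Hence $n \mapsto \rho_{n,i,j}(U)$ lies in $A_W$. Its supremum norm is the maximum of its values over $n \in W$, which by the estimate preceding Proposition \ref{rho_n is well-defined} is at most $1$, and at most $\v{p}^{(1-\frac{1}{p-1})(i-j)}$ when $i > j$. This uses the fact, stated earlier, that the supremum norm on $A_W$ equals the maximum of the Gauss norms.

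\textbf{Convergence and part (1).} It follows that $\n{\rho_{\cdot,i,j}(U) v(j)}_{A_W} \leq \v{v(j)} \to 0$ as $j \to \infty$. So the series $\sum_j \rho_{\cdot,i,j}(U) v(j)$ converges in the Banach algebra $A_W$, and its limit has norm at most $\sup_j \v{v(j)} = \n{v}$. Evaluation at each $n$ is continuous for the supremum norm, so the limit agrees pointwise with $\rho_n(U,v)(i)$. This proves (1).

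\textbf{Part (2) and the main obstacle.} Part (2) is the estimate in the proof of Proposition \ref{rho_n is well-defined} (2) verbatim. The integers $j_0$ and $j_1$ there depend only on $v$ and $\epsilon$, and the bounds used are uniform in $n$ and $U$. Taking $i_0 = j_0 + j_1$ gives $\sup_{i \geq i_0} \v{\rho_n(U,v)(i)} < \epsilon$ for every $n$. The only real obstacle is the rigid analyticity of $n \mapsto U_{0,0}^n$, in particular the convergence radius of the binomial series when $p = 2$. This is exactly why $q_p = 4$ there: for $p=2$ the estimate on $\binom{z}{m}$ would otherwise fail to give convergence on the closed unit disc. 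Beyond that, everything reduces to norm bounds already established.
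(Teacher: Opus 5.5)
Your proposal is correct and is the paper's proof written out in detail. The paper likewise gets (1) from the fact that each $n \mapsto \rho_{n,i,j}(U)$ is rigid analytic of norm $\leq 1$ together with Proposition \ref{rho_n is well-defined} (1), and gets (2) directly from Proposition \ref{rho_n is well-defined} (2); your binomial-series check that $n \mapsto U_{0,0}^{n}$ lies in $A_W$ makes explicit a step the paper leaves implicit, and you are right that $q_p = 4$ is what makes that check work for $p = 2$.

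One caveat, which you inherit from the paper rather than introduce: for $p = 2$ the exponent $1 - \frac{1}{p-1}$ vanishes, so the integer $j_1$ you import need not exist. In fact part (2) then fails: taking $n_p$ with $\v{n_p - l} = 1$ for all $l \in \Z$ gives $\v{\rho_n(F,\delta_0)(2^k)} = \v{\binom{n}{2^k} 2^{2^k}} = \v{2}$ for every $k$. So $p = 2$ causes trouble for part (2), not only for part (1) as you claim.
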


\begin{proof}
Since $\rho_{n,i,j}(U)$ is a rigid analytic function on $n$ of norm $\leq 1$ for any $(i,j) \in \N^2$, the first assertion immediately follows from Proposition \ref{rho_n is well-defined} (1), and the second assertion immediately follows from Proposition \ref{rho_n is well-defined} (2).
\end{proof}

\begin{prp}
\label{rigid analytic property of the composition}
For any $(U,U',v,i) \in B^2 \times \rC_0(\N,\Cp) \times \N$, $\rho_n(U,\rho_n(U',v))(i)$ is a rigid analytic function on $n \in W$ of norm $\leq \n{v}$.
\end{prp}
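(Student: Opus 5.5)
Write $w \coloneqq \rho_n(U',v)$, so that $w(j) = \sum_{j' \in \N} \rho_{n,j,j'}(U') v(j')$ depends on $n$. By definition
\be
\rho_n(U,\rho_n(U',v))(i) = \sum_{j \in \N} \rho_{n,i,j}(U)\, w(j).
\ee
The plan is to show that this series converges uniformly in $n \in W$ and that each partial sum is rigid analytic of norm $\leq \n{v}$. Since $A_W$ is a Banach $\Cp$-algebra and the closed ball of radius $\n{v}$ in it is closed, the limit is then rigid analytic of norm $\leq \n{v}$.

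\emph{Step 1: individual terms.} By Proposition \ref{rigid analytic property of rho_n} (1) applied to $(U',v)$, each $w(j)$ is a rigid analytic function on $n \in W$ of supremum norm $\leq \n{v}$. The estimate preceding Proposition \ref{rho_n is well-defined} shows that $\rho_{n,i,j}(U)$ is a rigid analytic function on $n$ of norm $\leq 1$; it is a polynomial in $n_p$ times characters evaluated at units, which are rigid analytic on each component. The supremum norm on $A_W$ is submultiplicative, so each product $\rho_{n,i,j}(U)\, w(j)$ lies in $A_W$ with norm $\leq \n{v}$. Each finite partial sum is then rigid analytic of norm $\leq \n{v}$ by the ultrametric inequality.

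\emph{Step 2: uniform convergence.} This is the main point. Let $\epsilon \in \R_{> 0}$. By Proposition \ref{rigid analytic property of rho_n} (2) applied to $(U',v)$, there is a $j_0$ independent of $n$ such that $\sup_{j \geq j_0} \n{w(j)} < \epsilon$, where the norm is the supremum norm on $A_W$ (equivalently, pointwise in $n$ uniformly). For $j \geq j_0$ we therefore get
\be
\sup_{n \in W} \v{\rho_{n,i,j}(U)\, w(j)(n)} \leq \sup_{n \in W} \v{w(j)(n)} < \epsilon.
\ee
So the terms of the series tend to $0$ in $A_W$. In an ultrametric Banach space this is enough for the series $\sum_j \rho_{n,i,j}(U)\, w(j)$ to converge in $A_W$. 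Its limit evaluated at each $n$ is the pointwise sum, because evaluation at $n$ is contracting, and the pointwise sum is exactly $\rho_n(U,\rho_n(U',v))(i)$.

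\emph{Step 3: conclusion.} The norm bound passes to the limit, giving norm $\leq \n{v}$. I expect the only delicate point to be confirming that Proposition \ref{rigid analytic property of rho_n} (2) is uniform in $n$, so that the tail bound holds in the $A_W$ norm and not merely pointwise. That proposition states the uniformity explicitly, and it is what makes the argument work.
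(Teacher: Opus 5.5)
Your proof is correct and takes the same route as the paper. The paper likewise expands $\rho_n(U,\rho_n(U',v))(i)=\sum_{j\in\N}\rho_{n,i,j}(U)\,\rho_n(U',v)(j)$ and cites Proposition \ref{rho_n is well-defined} (1) together with Proposition \ref{rigid analytic property of rho_n}. Your write-up also spells out what the paper's one-line proof leaves implicit: because $\rho_n(U',v)$ depends on $n$, the tail bound of Proposition \ref{rigid analytic property of rho_n} (2) must be uniform in $n$, so that the series converges in $A_W$ and not merely pointwise.
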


\begin{proof}
We have
\be
\rho_n(U,\rho_n(U',v))(i) = \sum_{j \in \N} \rho_{n,i,j}(U) \rho_n(U',v)(j),
\ee
and hence the assertion follows from Proposition \ref{rho_n is well-defined} (1) and Proposition \ref{rigid analytic property of rho_n}.
\end{proof}

\begin{prp}
\label{rho_n is a representation}
For any $n \in W$, $(\rC_0(\N,\Cp),\rho_n)$ is a unitary Banach $\Cp$-linear representation of $B$.
\end{prp}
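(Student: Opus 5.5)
We have to check four things for a fixed $n \in W$: $\rho_n(U,\cdot)$ is $\Cp$-linear and an isometry; $\rho_n$ is an action, i.e.\ $\rho_n(1,v) = v$ and $\rho_n(U,\rho_n(U',v)) = \rho_n(UU',v)$; the map $\rho_n \colon B \times \rC_0(\N,\Cp) \to \rC_0(\N,\Cp)$ is continuous; and $\rC_0(\N,\Cp)$ is free, which holds by definition since $\N$ is non-measurable. Linearity is clear from the definition. Proposition \ref{rho_n is well-defined} (1) gives $\n{\rho_n(U,v)} \leq \n{v}$, so each $\rho_n(U,\cdot)$ is contracting. Once the action property is known, applying the contraction to $U^{-1}$ gives the reverse inequality, and hence an isometry.

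The identities are the heart of the proof, and I would prove them by $p$-adic interpolation from $n \in \N$. Fix $U,U' \in B$, $v \in \rC_0(\N,\Cp)$ and $i \in \N$. By Proposition \ref{rigid analytic property of rho_n} (1) and Proposition \ref{rigid analytic property of the composition}, both $n \mapsto \rho_n(U,\rho_n(U',v))(i)$ and $n \mapsto \rho_n(UU',v)(i)$ are rigid analytic on $W$. It therefore suffices to show they agree on a subset of $W$ that has infinitely many accumulating points in each component $r \in \Z/s_p\Z$. The natural choice is integers $n \in \N$ with $n \equiv r \bmod s_p$ and $n$ large. Their images $n_p$ are infinite in the closed unit disc and accumulate in $\Zp$, so the identity theorem for elements of the Tate algebra $\Cp\ens{T}$ applies: a nonzero power series converging on the closed unit disc has only finitely many zeros there, by Weierstrass preparation. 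This forces $f_r = 0$ for the difference.

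For the integer case we use the explicit formula from the start of this subsection. For $n \in \N$ we have $\binom{n-j}{i-h} = 0$ when $i-h > n-j \geq 0$, but for $j > n$ the integer $n - j$ is negative, so the coefficients need not vanish. I would handle this by reducing first to $v$ with finite support, or to $v = \delta_j$. By continuity in $v$, using the uniform bound $\n{\rho_n(U,\cdot)} \leq 1$, it is enough to check the identity on each $\delta_j$. For fixed $j$, taking $n \geq j$ with $n$ in the right residue class, $\rho_n(U,\delta_j)$ is exactly the coordinate vector of $U \cdot X^{n-j}Y^j$ in $\Sym^n$. There the identity holds because $\Sym^n(\rho)$ is a genuine representation of $\GL_2(\Zp)$. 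One must also check that the exponents $U_{0,0}^{n-i-j+h}$ are read as characters, so that they match the integer powers; this is fine because $U_{0,0} \in \Zp^{\times}$ for $U \in B$. For $n \geq j$, every $i$ with a nonzero coefficient lies in $\N_{\leq n}$, so the composition $\rho_n(U,\rho_n(U',\delta_j))$ only involves $\delta_{j'}$ with $j' \leq n$, and stays inside $\Sym^n$. The condition $n \geq j$ excludes only finitely many integers, so infinitely many remain, and the interpolation argument goes through. The identity $\rho_n(1,v) = v$ is handled the same way, or directly from the formula. I expect this interpolation step, and keeping the bookkeeping for $j > n$ straight, to be the main obstacle.

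Finally, continuity. By Proposition \ref{Banach Currying} it suffices to show that $B \to \Hom_{\leq 1}(V,V)_{\r{s}}$ is continuous, i.e.\ that $U \mapsto \rho_n(U,v)$ is continuous for each $v$. Fix $\epsilon$. Proposition \ref{rho_n is well-defined} (2) gives an $i_0$, uniform in $U$, beyond which all coordinates are smaller than $\epsilon$. For $i < i_0$, truncate $v$ to a finite support at error $\epsilon$, again using the uniform bound $\v{\rho_{n,i,j}(U)} \leq 1$. What remains is finitely many functions $U \mapsto \rho_{n,i,j}(U)$, which are continuous: they are polynomials in the entries together with the continuous character $U_{0,0} \mapsto U_{0,0}^{n-i-j+h}$.
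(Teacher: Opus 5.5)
Your proof is correct and follows the paper's route: the action identity is interpolated in $n$ from the integer weights, where it is the identity for the genuine representation $\Sym^{n}(\rho)$, using rigid analyticity in $n$ together with a reduction to finitely supported vectors. The differences are organisational. You reduce to $v = \delta_j$ first and then apply the identity theorem directly to the integers $n \geq j$ in each residue class mod $s_p$, which bypasses the paper's intermediate reduction to $n_p \in \Zp$ and its cluster-point argument. You also explicitly check the isometry property and the joint continuity of $\rho_n$ (via Proposition~\ref{Banach Currying}), both of which the paper's proof leaves implicit.
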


\begin{proof}
The proof is essentially parallel to that of \cite{Mih15} Proposition 3.11 (i) except for the difference of the coefficient ring. We have
\be
\rho_n \left( \left( \begin{array}{cc} 1 & 0 \\ 0 & 1 \end{array} \right) , v \right) = v
\ee
for any $v \in \rC_0(\N,\Cp)$ by the definition of $\rho_n$. We have $\n{\rho_n(U,v)} \leq \n{v}$ for any $v \in \rC_0(\N,\Cp)$ by Proposition \ref{rigid analytic property of rho_n} (1). Therefore, it suffices to show $\rho_n(U,\rho_n(U',v)) = \rho_n(UU',v)$ for any $(U,U',v) \in B^2 \times \rC_0(\N,\Cp)$. Both hand sides are rigid analytic on $n$ by Proposition \ref{rigid analytic property of rho_n} (1) and Proposition \ref{rigid analytic property of the composition}. Therefore, it is reduced to the case $n_p \in \Zp$ by identity theorem.

\vs
It suffices to show $\v{\rho_n(U,\rho_n(U',v))(i) - \rho_n(UU',v)(i)} < \epsilon$ for any $(i,\epsilon) \in \N \times \R_{> 0}$. By $v \in \rC_0(\N,\Cp)$, there exists a $j_0 \in \N$ such that $\sup_{j \in \N_{\geq j_0}} \v{v(j)} < \epsilon$. We define $v' \in \rC_0(\N,\Cp)$ by
\be
v'(j) \coloneqq 
\left\{
\begin{array}{ll}
v(j) & (j < j_0) \\
0 & (j \geq j_0)
\end{array}
\right..
\ee
By $\n{v - v'} < \epsilon$ and Proposition \ref{rigid analytic property of rho_n}, it suffices to show $\rho_n(U,\rho_n(U',v'))(i) = \rho_n(UU',v')(i)$. For any $n' \in \N_{\geq \max \ens{j_0,i}}$, setting
\be
v'' \coloneqq \sum_{j=0}^{n'} v'(j) X^{n'-j} Y^j \in \Sym^{n'}_{\Cp}(\Cp^2),
\ee
we have
\be
\Sym^{n'}(\rho)(U,\Sym^{n'}(\rho)(U',v''))(i) = \Sym^{n'}(\rho)(UU',v'')(i)
\ee
and hence
\be
\rho_{n'}(U,\rho_n(U',v'))(i) = \rho_{n'}(UU',v')(i)
\ee
by the definition of $\rho_{n'}$ directly interpolating $\Sym^{n'}(\rho)$. Since $n$ is a cluster point of $\N_{\geq \max \ens{j_0,i}} \subset W$ by $n_p \in \Z_p$, we conclude the equality.
\end{proof}

As the proof of Proposition \ref{rho_n is a representation} shows, for any $n \in \N$, $\sum_{i=0}^{n} \Cp \delta_i$ is a non-trivial weakly closed subrepresentation of $(\rC_0(\N,\Cp),\rho_n)$ isomorphic to $(\Sym^n(\Cp^2),\Sym^n(\rho))$, and hence $(\rC_0(\N,\Cp),\rho_n)$ is not weakly irreducible. On the other hand, we obtain the following:

\begin{thm}
\label{weak irreducibility of rho_n}
For any $n \in W$ with $n_p \notin \N$, $(\rC_0(\N,\Cp),\rho_n)$ is weakly irreducible.
\end{thm}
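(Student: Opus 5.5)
The plan is to pass to the dual side, show that a nonzero weakly closed subrepresentation is cut out by vanishing of a set of coordinates, and then use the unipotent elements of $B$ to force that set to be empty. Let $W \subset \rC_0(\N,\Cp)$ be a weakly closed subrepresentation of $(\rC_0(\N,\Cp),\rho_n)$. By Lemma \ref{equivariant perp equivalence}, $W^{\perp}$ is a generically weakly closed pure left $O_{\Cp}[[B]]$-submodule of $M \coloneqq (\rC_0(\N,\Cp),\rho_n)^{\vee} \cong O_{\Cp}^{\N}$, with coordinates taken against the dual basis $(\delta_j^*)_{j \in \N}$. Since $M$ is cofree, Proposition \ref{characterisation of weak topology} (1) says that the weak topology of $M$ is the product topology. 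This is why I work in $M$ rather than in $V$: in $V$ the weak topology is tested against all of $\ell^{\infty}$-type functionals, not coordinatewise.

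\emph{Step 1: diagonalising the torus.} For $t_d \coloneqq \left(\begin{array}{cc} 1 & 0 \\ 0 & d \end{array}\right) \in B$ with $d \in \Zp^{\times}$, only the term $h = i = j$ survives in $\rho_{n,i,j}(t_d)$. Hence $\rho_n(t_d,\delta_i) = d^i \delta_i$, and dually $t_d$ acts on $\delta_j^*$ through a character of the form $d^{\pm j}$. Fix $d_0 \in 1 + q_p \Zp$ that is not a root of unity; then the scalars $d_0^j$, $j \in \N$, are pairwise distinct.

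Let $m \in W^{\perp}$, let $j \in \N$, and let $F \subset \N$ be a finite set containing $j$. Take the Lagrange polynomial $P$ that equals $1$ at the eigenvalue attached to $j$ and $0$ at the eigenvalues attached to $F \setminus \ens{j}$. Choose $c \in O_{\Cp} \setminus \ens{0}$ such that $cP$ has coefficients in $O_{\Cp}$. Then $cP([t_{d_0}]) \in O_{\Cp}[[B]]$, so $cP([t_{d_0}]) m \in W^{\perp}$, and this element has $j$-coordinate $c\,m(j)$ and coordinate $0$ at every other index of $F$. By purity, $c^{-1} cP([t_{d_0}]) m \in W^{\perp}$. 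Letting $F$ grow, these elements converge in the product topology to $m(j)\delta_j^*$. Since $W^{\perp}$ is weakly closed (Proposition \ref{topology of perp} (1)), we get $m(j) \delta_j^* \in W^{\perp}$.

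Set $S \coloneqq \set{j}{\delta_j^* \in W^{\perp}}$. The previous paragraph, purity, and weak closedness together give $W^{\perp} = \set{m \in M}{\forall j \notin S[m(j) = 0]}$. Taking $\perp$ and using Corollary \ref{perp equivalence}, we obtain $W = \set{v}{\forall j \in S[v(j) = 0]}$.

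\emph{Step 2: unipotents.} For $u_b \coloneqq \left(\begin{array}{cc} 1 & b \\ 0 & 1 \end{array}\right)$ we have $U_{1,0} = 0$, which forces $h = i$. Thus $\rho_n(u_b,\delta_j) = \sum_{i \leq j} \binom{j}{i} b^{j-i} \delta_i$, and with $b = 1$ every coefficient is nonzero. So if $j \notin S$, then $\delta_j \in W$, and hence every $i \leq j$ lies outside $S$.

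For $l_c \coloneqq \left(\begin{array}{cc} 1 & 0 \\ c & 1 \end{array}\right)$ with $c \in p\Zp \setminus \ens{0}$ we have $U_{0,1} = 0$, which forces $h = j$. Then the coefficient of $\delta_{j+1}$ in $\rho_n(l_c,\delta_j)$ is $\binom{n-j}{1} c = (n_p - j)c$, which is nonzero precisely because $n_p \notin \N$. So if $j \notin S$, then $j+1 \notin S$.

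It follows that $\N \setminus S$ is either empty or all of $\N$, that is, $W = 0$ or $W = \rC_0(\N,\Cp)$. Both are indeed weakly closed subrepresentations, and they are distinct, so $(\rC_0(\N,\Cp),\rho_n)$ is weakly irreducible.

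The main obstacle is Step 1: making the torus projection legitimate. The projectors have to be produced inside $O_{\Cp}[[B]]$ with integral coefficients, so they only exist up to the scalar $c$, and purity is what removes $c$. The passage to the limit needs the product-topology description of the weak topology on the cofree module, together with the fact that weak closedness of $W^{\perp}$ is preserved by the duality. I would also check carefully that the dual torus characters are pairwise distinct.
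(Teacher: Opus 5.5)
\textbf{Gap in the projection step of Step 1.} Your route differs from the paper's. Step 2, and the passage from ``$m(j)\delta_j^*\in W^{\perp}$ for all $m,j$'' to $W=\set{v}{\forall j\in S[v(j)=0]}$, are fine. The projection step in Step 1, however, has a genuine gap.

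Purity only lets you cancel $c$ from $c\,m'\in W^{\perp}$ when $m'$ itself already lies in $M=O_{\Cp}^{\N}$. Here $m'=P([t_{d_0}])m$ has $i$-th coordinate $P(\lambda_i)m(i)$ for \emph{every} $i\in\N$, where $\lambda_i=d_0^{-i}$, and not only for $i\in F$. You never bound $\v{P(\lambda_i)}$ for $i\notin F$, and it need not be bounded. Since $\v{d_0^{a}-1}=\v{a}\,\v{d_0-1}$ for $a\in\Z$, one gets $\v{P(\lambda_i)}=\prod_{l\in F\setminus\ens{j}}\v{i-l}/\v{j-l}$. For $F=\ens{0,p}$, $j=0$, $i=1$ this equals $\v{p}^{-1}>1$, so $P([t_{d_0}])m\notin M$ as soon as $\v{m(1)}=1$. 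Thus ``by purity'' is false for an arbitrary finite $F$.

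You also cannot bypass purity by working in $kM\cong\ell^{\infty}(\N,\Cp)$ with generic weak closedness. Coordinatewise convergence there yields weak convergence (against $\rC_0(\N,\Cp)$) only for nets bounded in norm. A uniform bound on $\sup_{i}\v{P(\lambda_i)}$ is exactly the missing estimate.

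\textbf{Repair and comparison with the paper.} Let $F$ run through initial segments $\ens{0,1,\ldots,N}$ with $N\geq j$. Then $\prod_{l\neq j}\frac{x-l}{j-l}=(-1)^{N-j}\binom{x}{j}\binom{x-j-1}{N-j}$ is integer-valued on $\Z$, so $\sup_{i\in\N}\v{P(\lambda_i)}\leq 1$ and $P([t_{d_0}])$ maps $M$ into $M$. Purity then applies, and $P([t_{d_0}])m\to m(j)\delta_j^*$ in the product topology as $N\to\infty$. This is the all-weight version of Lemma \ref{weight zero concentration}, which treats only $j=0$ via divisibility of a product of $i_0$ consecutive integers by $i_0!$.

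With this fix your argument is correct and genuinely different from the paper's. The paper works only at weight $0$:
\begin{itemize}
\item it first produces an element with non-zero $0$-th coordinate, using the lower unipotent $F$ and a $p$-adic dominance estimate with $F^{p^{r_0}}$ (Lemma \ref{highest weight});
\item it then isolates $\delta_0$ (Lemmas \ref{weight zero concentration} and \ref{weight zero spike});
\item finally it gets weak density from $E$-translates of $\delta_0$ and a Vandermonde determinant (Lemma \ref{weight shift}).
\end{itemize}
Your full weight decomposition shows that every weakly closed subrepresentation is a coordinate subspace, for every $n$. After that, each unipotent step needs only one non-vanishing matrix coefficient on a basis vector $\delta_j\in V$. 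This removes the dominance estimate and the Vandermonde argument, and it isolates the hypothesis $n_p\notin\N$ in the single coefficient $(n_p-j)c$. For $n_p\in\N$ it also pins down $\sum_{i\leq n_p}\Cp\delta_i$ as the only other possible weakly closed subrepresentation.
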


In order to prove Theorem \ref{weak irreducibility of rho_n}, we prepare convention and lemmata. We fix an $n \in W$. We set $M_n \coloneqq (\rC_0(\N,\Cp),\rho_n)^{\vee}$, and identify the underlying $O_{\Cp}$-module of $M_n$ with $O_{\Cp}^{\N}$ through the canonical isomorphism. Then for any $(U,m,i) \in B \times M_n \times \N$, we have
\be
(U m)(i) = \sum_{j \in \N} m(j) \rho_{n,j,i}(U^{-1})
\ee
by the definition of the dual action. Since $O_{\Cp}$ is not a field, it is not straightforward to apply Lie algebra theory. We set
\be
H^{+}_u & \coloneqq & \left( \begin{array}{cc} u & 0 \\ 0 & 1 \end{array} \right) \in B \\
H_u & \coloneqq & \left( \begin{array}{cc} u & 0 \\ 0 & u^{-1} \end{array} \right) \in \SL_2(\Zp) \cap B
\ee
for each $u \in \Zp^{\times}$ and
\be
E & \coloneqq & \left( \begin{array}{cc} 1 & 1 \\ 0 & 1 \end{array} \right) \in \SL_2(\Zp) \cap B \\
F & \coloneqq & \left( \begin{array}{cc} 1 & 0 \\ p & 1 \end{array} \right) \in \SL_2(\Zp) \cap B.
\ee
We set $H^{+} \coloneqq \set{H^{+}_u}{u \in \Zp^{\times}}$ and $H \coloneqq \set{H_u}{u \in \Zp^{\times}}$. For any $(i,j,u) \in \N^2 \times \Zp^{\times}$, we have
\be
\rho_{n,i,j}(H^{+}_u) & = &
\left\{
\begin{array}{ll}
u^{n-i} & (i = j) \\
0 & (i \neq j)
\end{array}
\right.. \\
\rho_{n,i,j}(H_u) & = &
\left\{
\begin{array}{ll}
u^{n-2i} & (i = j) \\
0 & (i \neq j)
\end{array}
\right..
\ee
For any $(i,j,t) \in \N^2 \times \Zp$, we have
\be
\rho_{n,i,j}(E^t) & = & 
\left\{
\begin{array}{ll}
\binom{j}{i} t^{j-i} & (i \leq j) \\
0 & (i > j)
\end{array}
\right. \\
\rho_{n,i,j}(F^t) & = & 
\left\{
\begin{array}{ll}
0 & (i < j) \\
\binom{n-j}{i-j} (pt)^{i-j} & (i \geq j)
\end{array}
\right..
\ee

\begin{lmm}
\label{highest weight}
Let $M \subset M_n$ be a non-zero $F$-stable $O_{\Cp}$-submodule. If $n_p \notin \N$, then there exists an $m \in M$ such that $m(0) \neq 0$.
\end{lmm}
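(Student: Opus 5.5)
The plan is to reduce the claim to the vanishing of a one-variable power series, using the explicit formula for the dual action of the powers of $F$ on $M_n = O_{\Cp}^{\N}$. By the formula $(Um)(i) = \sum_{j \in \N} m(j) \rho_{n,j,i}(U^{-1})$ and the formula for $\rho_{n,i,j}(F^t)$, we have for every $(t,m,i) \in \N \times M_n \times \N$
\be
(F^t m)(i) = \sum_{j \in \N_{\geq i}} \binom{n-i}{j-i} (-pt)^{j-i} m(j),
\ee
since $(F^t)^{-1} = F^{-t}$. In particular, at $i = 0$ we obtain $(F^t m)(0) = f_m(t)$, where
\be
f_m(T) \coloneqq \sum_{j \in \N} \binom{n}{j} (-p)^j m(j) T^j.
\ee

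Fix a non-zero $m \in M$ and assume, for a contradiction, that every element of $M$ has vanishing $0$-th entry. Since $M$ is $F$-stable, $F^t m \in M$ for every $t \in \N$, and hence $f_m(t) = 0$ for all $t \in \N$. The coefficients satisfy
\be
\v{\binom{n}{j} p^j m(j)} < \v{p}^{(1-\frac{1}{p-1})j}
\ee
by the Legendre estimate recorded before Proposition \ref{rho_n is well-defined}. For $p$ odd these coefficients tend to $0$, so $f_m$ converges on the closed unit disc of $\Cp$ and defines a continuous function on $\Zp$. Since $\N$ is dense in $\Zp$, it follows that $f_m$ vanishes on all of $\Zp$, which is an infinite subset of the closed unit disc. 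By Strassmann's theorem (equivalently, the identity theorem for power series converging on the closed unit disc), every coefficient of $f_m$ must then vanish, i.e.\ $\binom{n}{j} m(j) = 0$ for all $j \in \N$.

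Now use the hypothesis $n_p \notin \N$. For every $j$, $\binom{n}{j} = \binom{n_p}{j} = \frac{1}{j!} \prod_{h=0}^{j-1} (n_p - h)$ is non-zero, so $m(j) = 0$ for all $j$, contradicting $m \neq 0$. Hence some $F^t m \in M$ has non-zero $0$-th entry, and this is the required element.

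The main obstacle is the convergence needed to apply Strassmann's theorem when $p = 2$. There the above estimate only gives coefficients of absolute value $\leq 1$, not tending to $0$. In that case I would first try to sharpen the bound on $\binom{n}{j} 2^j$. Failing that, I would work with a finer variable, e.g.\ use only $t \in 2\N$ (powers of $F^2$, which still lie in $M$), so that the coefficients acquire an extra factor $2^j$. In either case one must check that the resulting series converges on a disc containing infinitely many of the relevant integer points, so that the identity theorem applies. The remaining steps (density of $\N$ in $\Zp$ and non-vanishing of the binomial coefficients) are routine.
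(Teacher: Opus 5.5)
Your proposal is correct, but it follows a different route from the paper.

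\textbf{The paper's route.} The paper argues by descent on the lowest non-vanishing index. It takes $i_0$ minimal such that some element of $M$ has non-zero $i_0$-th entry, supposes $i_0>0$, and applies $F^{p^{r}}$ with $r$ large to some $m\in M$ with $m(i_0)\neq 0$. Every element of $M$ vanishes below $i_0$, so $(F^{p^r}m)(i_0-1)$ is a series whose $j=i_0$ term has absolute value $|(n_p-i_0+1)p^{r+1}m(i_0)|$. This term strictly dominates the others, since those carry the factor $p^{(r+1)(j-i_0+1)}$ with $j-i_0+1\geq 2$, which beats the binomial denominators once $r$ is large. Hence $F^{p^r}m\in M$ is non-zero at $i_0-1$, contradicting the minimality of $i_0$. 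This uses only the ultrametric inequality and the single non-vanishing $n_p\neq i_0-1$. Because $t=p^r$ supplies as many powers of $p$ as needed, the same estimate works uniformly in $p$.

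\textbf{Your route.} You regard $t\mapsto(F^tm)(0)$ as the power series $f_m$ and kill it with Strassmann's theorem, using all the non-vanishings $\binom{n_p}{j}\neq 0$ at once. This avoids the minimality argument and gives more: for odd $p$ and every non-zero $m\in M_n$, one has $(F^tm)(0)\neq 0$ for all but finitely many $t\in\N$. The density of $\N$ in $\Zp$ is not needed, since $\N$ is already an infinite subset of the closed unit disc.

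\textbf{The case $p=2$.} The price of your approach is that the coefficients of $f_m$ must tend to $0$, which is why $p=2$ is special. Your second fallback settles it. For $t=2s$ the coefficient of $s^j$ is $\binom{n}{j}(-4)^j m(j)$. By Legendre's formula $|\binom{n}{j}|\leq |j!|^{-1}\leq 2^{j}$, so this coefficient has absolute value at most $2^{-j}$. Thus Strassmann's theorem applies to $s\mapsto (F^{2s}m)(0)$ on $\N$, and $F^2$-stability follows from $F$-stability.
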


\begin{proof}
We denote by $i_0 \in \N$ the minimum of an $i \in \N$ such that there exists an $m \in M$ with $m(i) \neq 0$, which makes sense by $M \neq \ens{0}$. Assume $i_0 > 0$. Take an $m \in M$ with $m(i_0) \neq 0$. Let $r_0 \in \N$ denote the minimum of an $r \in \N$ such that $\sup_{i \in \N} \v{m(i)} < \v{p^{-r - 1 + \frac{2}{p-1}}(n_p - i_0) m(i_0)}$, which makes sense by $n_p \notin \N$ and $m(i_0) \neq 0$. By the definition of $i_0$, we have
\be
\v{(F^{p^{r_0}} m)(i_0-1)} & = & \v{\sum_{j \in \N} m(j) \rho_{n,j,i_0-1}(F^{-p^{r_0}})} = \v{\sum_{j \in \N_{\geq i_0}} \binom{n-(i_0-1)}{j-(i_0-1)} (p^{r_0+1})^{j-(i_0-1)} m(j)} \\
& = & \v{\sum_{j \in \N_{\geq i_0}} \binom{n-i_0+1}{j-i_0+1} p^{(j-i_0+1)(r_0+1)} m(j)}.
\ee
For any $j \in \N_{\geq i_0}$, we have
\be
\v{m(j) \binom{n-i_0+1}{j-i_0+1} p^{(j-i_0+1)(r_0+1)}} \ 
\left\{
\begin{array}{ll}
= \v{(n_p-i_0) p^{r_0+1} m(i_0)} & (j = i_0) \\
< \v{p^{2(r_0 + 1 - \frac{1}{p-1})} m(j)} & (j = i_0)
\end{array}
\right..
\ee
Therefore, by the definition of $r_0$, we obtain
\be
\v{(F^{p^{r_0}} m)(i_0-1)} = \v{\sum_{j \in \N_{\geq i_0}} \binom{n-i_0+1}{j-i_0+1} p^{(j-i_0+1)(r_0+1)} m(j)} = \v{(n_p-i_0) p^{r_0+1} m(i_0)} \neq 0.
\ee
This contradicts the definition of $i_0$.
\end{proof}

\begin{lmm}
\label{weight zero concentration}
Let $M \subset M_n$ be an $H^{+}$-stable $O_{\Cp}$-submodule. Then for any $(m,i_0) \in M_n \times \N$, there exists an $m' \in i_0! p^{i_0} M$ such that $m'(0) = i_0! p^{i_0} m(0)$ and $m'(i) = 0$ for any $i \in \N_{\leq i_0} \setminus \ens{0}$.
\end{lmm}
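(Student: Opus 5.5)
The plan is a Vandermonde (Lagrange interpolation) argument using only the diagonal torus $H^{+}$. I read the statement as: given $m \in M$, find a combination of $H^{+}$-translates of $m$ whose coordinates $1,\ldots,i_0$ vanish and whose $0$-th coordinate is $i_0! p^{i_0} m(0)$.

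\textbf{Step 1 (torus action).} By the formula for $\rho_{n,i,j}(H^{+}_u)$ and the dual action $(Um)(i) = \sum_{j} m(j) \rho_{n,j,i}(U^{-1})$, we get $(H^{+}_u m)(i) = u^{i-n} m(i)$ for every $(u,m,i) \in \Zp^{\times} \times M_n \times \N$. Since the image of $n$ lies in $O_{\Cp}^{\times}$, the scalar $u^n$ is a unit of $O_{\Cp}$. Hence $M$, being $H^{+}$-stable, is stable under the diagonal operator $D_u \colon m \mapsto (u^i m(i))_{i \in \N} = u^n H^{+}_u m$.

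\textbf{Step 2 (interpolation).} Choose the $i_0+1$ units $u_h \coloneqq 1 + ph \in \Zp^{\times}$ for $h = 0,\ldots,i_0$. For any polynomial $f$ of degree $\leq i_0$, Lagrange interpolation at the nodes $u_h$ gives $f(0) = \sum_{h=0}^{i_0} L_h f(u_h)$ with
\[
L_h \coloneqq \prod_{l \neq h} \frac{u_l}{u_l - u_h}.
\]
Applying this to $f(x) = x^i$ for $0 \leq i \leq i_0$ yields $\sum_h L_h u_h^i = \delta_{i,0}$. The denominator of $L_h$ is $\prod_{l \neq h} p(l-h) = \pm p^{i_0} h! (i_0-h)!$, and the numerator is a unit. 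Therefore $c_h \coloneqq i_0! p^{i_0} L_h = \pm \binom{i_0}{h} \prod_{l \neq h} u_l$ lies in $\Zp$. This integrality is precisely where the factor $i_0! p^{i_0}$ in the statement comes from.

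\textbf{Step 3 (conclusion and main obstacle).} Set $m' \coloneqq \sum_h c_h D_{u_h} m$. Then $m' \in M$ by Step 1, and $m'(i) = i_0! p^{i_0} \bigl( \sum_h L_h u_h^i \bigr) m(i)$, which equals $i_0! p^{i_0} m(0)$ for $i = 0$ and $0$ for $1 \leq i \leq i_0$. The step I expect to be delicate is the membership claim "$m' \in i_0! p^{i_0} M$" in its literal sense. Formally $m' = i_0! p^{i_0} \sum_h L_h D_{u_h} m$, but the element $\sum_h L_h D_{u_h} m$ lies only in $k \otimes_{O_{\Cp}} M$, not necessarily in $M$, because the $L_h$ themselves are not integral. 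I would first check whether the intended use, namely combining this lemma with Lemma \ref{highest weight} to produce an element of $M$ concentrated near weight $0$, only needs $m' \in M$ together with the displayed coordinate identities; the construction above provides exactly that. If the stronger divisibility is genuinely required, I would try to recover it from purity of $M$ via Proposition \ref{purity criterion}, which is available when $M$ arises as a generically weakly closed pure submodule in the application to Theorem \ref{weak irreducibility of rho_n}.
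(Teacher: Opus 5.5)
Your construction is sound and takes a different route from the paper. You use several torus elements $H^{+}_{1+ph}$ with Lagrange weights. The paper instead applies the single operator $\prod_{k=1}^{i_0}([H^{+}_{1+p}] - (1+p)^{-n+k}[H^{+}_1])$ to $m$. That operator kills coordinates $1,\ldots,i_0$ and multiplies coordinate $i$ by $(1+p)^{-i_0(n-i)}\prod_{k=1}^{i_0}(1-(1+p)^{k-i})$. Your route gives $m'(0) = i_0! p^{i_0} m(0)$ exactly for every $p$. The paper gets this only up to a unit, via $\v{1-(1+p)^h} = \v{ph}$, and that identity fails for $p=2$ and even $h$ (e.g.\ $\v{1-3^2} = \v{8}$).

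You are right that the literal claim $m' \in i_0! p^{i_0} M$ cannot follow from $H^{+}$-stability alone; it is false for non-pure $M$. Take $i_0 \geq p-1$, $m = \sum_{i \leq i_0} \delta_i$, and $M$ the $O_{\Cp}$-span of $\set{H^{+}_u m}{u \in \Zp^{\times}}$. By Fermat's little theorem every $x \in M$ has $x(p-1) - x(0) \in p O_{\Cp}$, so no $x \in M$ has $x(0)=1$ and $x(p-1)=0$. What the paper's proof actually establishes, and what Lemma \ref{weight zero spike} combines with purity, is $m' \in M \cap i_0! p^{i_0} M_n$.

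The genuine gap is that you never prove this divisibility in $M_n$, and neither of your fallbacks works without it. The application needs more than $m' \in M$. Lemma \ref{weight zero spike} divides by $i_0! p^{i_0}$ to obtain elements of $M$ whose $0$-th coordinate is $m(0)$. Purity yields $(i_0! p^{i_0})^{-1} m' \in M$ only once you know $(i_0! p^{i_0})^{-1} m' \in M_n = O_{\Cp}^{\N}$. That means every coordinate of $m'$, including those with $i > i_0$, must be divisible by $i_0! p^{i_0}$. For $i > i_0$ your coordinate is $m'(i) = i_0! p^{i_0}\bigl(\sum_h L_h u_h^i\bigr) m(i)$ with non-integral $L_h$, and your proposal says nothing about it. In the paper's product construction this is read off directly: for odd $p$ the multiplier has absolute value $\v{p^{i_0}\prod_{k=1}^{i_0}(i-k)}$, and a product of $i_0$ consecutive integers is divisible by $i_0!$.

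The repair in your setting is short. Divide $x^i$ by the monic polynomial $\prod_{h=0}^{i_0}(x-u_h) \in \Zp[x]$. The remainder $r_i$ lies in $\Zp[x]$, has degree $\leq i_0$, and satisfies $r_i(u_h) = u_h^i$, so $r_i$ is the Lagrange interpolant and $\sum_h L_h u_h^i = r_i(0) \in \Zp$. Hence $m' \in i_0! p^{i_0} M_n$. For pure $M$ this gives $m' \in i_0! p^{i_0} M$, which is exactly what the subsequent argument uses.
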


\begin{proof}
For each $k \in \N$, we set
\be
\mu_k \coloneqq [H^{+}_{1+p}] - (1+p)^{-n+k} [H^{+}_1] \in O_{\Cp}[H^{+}].
\ee
Since $\set{\mu_k}{k \in \N}$ is commutative, the finite product
\be
\mu \coloneqq \prod_{k=1}^{i_0} \mu_k \in O_{\Cp}[H^{+}]
\ee
makes sense without ambiguity. Set $m' \coloneqq \mu m \in M_n$. For each $i \in \N$, we have
\be
m'(i) = \prod_{k=1}^{i_0} ((1+p)^{-n+i} - (1+p)^{-n+k}) m(i) = (1+p)^{-i_0(n-i)} \prod_{k=1}^{i_0} (1 - (1+p)^{k-i}) m(i).
\ee
This implies the assertion by $\v{1 - (1+p)^h} = \v{ph}$ for any $h \in \N$.
\end{proof}

\begin{lmm}
\label{weight zero spike}
Let $M \subset M_n$ be a weakly closed $H^{+}$-stable pure $O_{\Cp}$-submodule admitting an $m \in M$ with $m(0) \neq 0$. Then $\delta_0 \in M$ holds.
\end{lmm}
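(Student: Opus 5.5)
The plan is to use Lemma \ref{weight zero concentration} to produce, inside $M$, elements whose finitely many low coordinates beyond $0$ vanish. Weak closedness will then let us pass to a limit. Purity will remove the scalar factors.

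Fix $m \in M$ with $m(0) \neq 0$. For each $i_0 \in \N$, Lemma \ref{weight zero concentration} gives $m'_{i_0} \in i_0! p^{i_0} M$ with
\be
m'_{i_0}(0) = i_0! p^{i_0} m(0) \quad\text{and}\quad m'_{i_0}(i) = 0 \text{ for } 0 < i \leq i_0.
\ee
Write $m'_{i_0} = i_0! p^{i_0} m''_{i_0}$ with $m''_{i_0} \in M$. This division is legitimate because $M_n \cong O_{\Cp}^{\N}$ is torsion-free; if one only knows $m'_{i_0} \in M$, the purity of $M$ gives $m''_{i_0} \in M$ as well. The element $m''_{i_0}$ then satisfies $m''_{i_0}(0) = m(0)$ and $m''_{i_0}(i) = 0$ for $0 < i \leq i_0$.

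Next I pass to the limit. By Proposition \ref{characterisation of weak topology} (1), the weak topology of $M_n \cong O_{\Cp}^{\N}$ is the product topology. Hence the sequence $(m''_{i_0})_{i_0 \in \N}$ converges weakly to $m(0)\delta_0$: for each fixed coordinate $i$, the $i$-th entry is eventually constant, equal to $m(0)$ if $i = 0$ and to $0$ otherwise. Since $M$ is weakly closed, $m(0)\delta_0 \in M$. Since $m(0) \neq 0$ and $M$ is pure, $\delta_0 \in M$.

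The main point to check is the scalar bookkeeping. Lemma \ref{weight zero concentration} is stated with $m \in M_n$ and $m' \in i_0! p^{i_0} M$, so I must confirm that $m'$ really lies in $i_0! p^{i_0} M$ and not merely in $M$ with a divisible value. Inspecting its proof, $m' = \mu m$ with $\mu \in O_{\Cp}[H^{+}]$ and $m \in M$ lies in $M$ by $H^{+}$-stability. Its entries are $m(i)$ times products $\prod_{k}(1-(1+p)^{k-i})$ up to units. These products vanish for $1 \leq i \leq i_0$ and are divisible by $\prod_{k=1}^{i_0}\v{p(k-i)}$-sized factors otherwise. So $m'/(i_0! p^{i_0}) \in O_{\Cp}^{\N}$, and purity then puts it in $M$. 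With this verified, the weak limit argument above completes the proof.
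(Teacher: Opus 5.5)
Your proof is correct and takes essentially the same approach as the paper: Lemma \ref{weight zero concentration} together with purity gives elements of $M$ that agree with $m(0)\delta_0$ in all coordinates $\leq i_0$. Weak closedness (via the product-topology description in Proposition \ref{characterisation of weak topology} (1)) then yields $m(0)\delta_0 \in M$, and purity yields $\delta_0 \in M$; your observation that the lemma must be applied with $m \in M$ (not merely $m \in M_n$, as it is literally stated) so that $\mu m \in M$ by $H^{+}$-stability is a sensible clarification the paper leaves implicit.
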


\begin{proof}
For any $i_0 \in \N$, there exists an $m' \in M$ such that $m'(0) = m(0)$ and $m'(i) = 0$ for any $i \in \N_{\leq i_0} \setminus \ens{0}$ by Lemma \ref{weight zero concentration} and the purity of $M$. Therefore, we have $m(0) \delta_0 \in M$ by the weak closedness of $M$. Thus, we obtain $\delta_0 \in M$ by the purity of $M$.
\end{proof}

\begin{lmm}
\label{weight shift}
Let $M \subset M_n$ be an $E$-stable $O_{\Cp}$-submodule with $\delta_0 \in M$. Then for any $i_0 \in \N$, there exists a $d \in \N_{> 0}$ such that for any $(a_i)_{i \in \N_{< i_0}} \in d O_{\Cp}^{i_0}$, there exists an $m \in M$ such that $m(i) = a_i$ for any $i \in \N_{< i_0}$.
\end{lmm}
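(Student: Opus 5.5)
The plan is to produce the required elements explicitly as $O_{\Cp}$-linear combinations of the translates $E^s \delta_0$ for $s \in \N_{< i_0}$, and to invert the resulting linear system with a Vandermonde matrix over $\Z$.

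\emph{Computing the translates.} By the formula for the dual action recorded before Lemma \ref{highest weight}, for $(s,i) \in \N^2$ we have
\be
(E^s \delta_0)(i) = \sum_{j \in \N} \delta_0(j) \rho_{n,j,i}(E^{-s}) = \rho_{n,0,i}(E^{-s}) = \binom{i}{0} (-s)^{i} = (-s)^i,
\ee
using the explicit formula for $\rho_{n,i,j}(E^t)$ with $t = -s$ and $0 \leq i$. Since $M$ is $E$-stable and $\delta_0 \in M$, every $E^s \delta_0$ with $s \in \N$ lies in $M$. Because $M$ is an $O_{\Cp}$-submodule, so does every combination $\sum_{s \in \N_{< i_0}} c_s E^s \delta_0$ with $c \in O_{\Cp}^{i_0}$.

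\emph{Inverting the system.} Let $V \in \rM_{i_0}(\Z)$ be the matrix with entries $V_{i,s} \coloneqq (-s)^i$ for $0 \leq i,s < i_0$. It is a Vandermonde matrix in the pairwise distinct nodes $0,-1,\ldots,-(i_0-1)$. Its determinant
\be
d_0 = \prod_{0 \leq s < s' < i_0} (s - s')
\ee
is therefore a non-zero integer, and we set $d \coloneqq \v{d_0} \in \N_{> 0}$, the ordinary absolute value (the case $i_0 = 0$ is trivial with $d = 1$). The adjugate $\mathrm{adj}(V)$ has entries in $\Z \subset O_{\Cp}$ and satisfies $V \, \mathrm{adj}(V) = d_0 \cdot 1$.

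Now take $a = (a_i)_{i \in \N_{< i_0}} \in d O_{\Cp}^{i_0}$ and write $a = d_0 b$ with $b \in O_{\Cp}^{i_0}$, which is possible since $d_0 = \pm d$. Set $c \coloneqq \mathrm{adj}(V) b \in O_{\Cp}^{i_0}$. Then $V c = d_0 b = a$. Hence
\be
m \coloneqq \sum_{s \in \N_{< i_0}} c_s E^s \delta_0 \in M
\ee
satisfies $m(i) = \sum_s (-s)^i c_s = (Vc)_i = a_i$ for every $i \in \N_{< i_0}$, which is the claim.

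There is no real obstacle here. The only points needing care are the sign and index conventions in the dual action (the transpose $\rho_{n,j,i}$ and the inverse $U^{-1}$), and the observation that $d$ is independent of $a$ and of $M$. Only non-negative powers of $E$ are used, so $E$-stability alone suffices.
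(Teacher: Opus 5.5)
Your proof is correct and is essentially the paper's argument: the paper also takes $O_{\Cp}$-linear combinations of the translates $E^t \delta_0$ and uses the non-zero integer Vandermonde determinant as $d$. The only differences are that it uses the nodes $t = 1, \ldots, i_0$ instead of your $0, \ldots, i_0 - 1$, and that it states $(E^t \delta_0)(i) = t^i$ where your $(-s)^i$ is what the dual action formula actually gives (a harmless sign slip); your adjugate step spells out the inversion over $O_{\Cp}$ that the paper leaves implicit.
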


\begin{proof}
For any $(t,i) \in \N^2$, we have
\be
(E^t \delta_0)(i) = \sum_{j \in \N} \delta_0(j) \rho_{n,j,i}(E^{-t}) = \rho_{n,0,i}(E^t) = \binom{i}{0} t^{i-0} = t^i.
\ee
Therefore, the determinant $\prod_{t_0=1}^{i_0-1} \prod_{t_1=t_0+1}^{i_0} (t_1 - t_0)$ of the Vandermonde's matrix associated to $(t)_{t=1}^{i_0}$ satisfies the desired property of $d$.
\end{proof}

\begin{proof}[Proof of Theorem \ref{weak irreducibility of rho_n}]
Let $M \subset M_n$ be a non-zero generically weakly closed pure left $O_{\Cp}[[B]]$-submodule. By Lemma \ref{highest weight}, Lemma \ref{weight zero concentration}, and Lemma \ref{weight zero spike}, we have $\delta_0 \in M$. For any $i_0 \in \N$ and any $(a_i)_{i \in \N_{< i_0}} \in O_{\Cp}^{i_0}$, there exists an $m \in M$ such that $m(i) = a_i$ for any $i \in \N_{< i_0}$ by Lemma \ref{weight shift} and the purity of $M$. This implies the weak density of $M$ in $M_n$, and hence $M = M_n$ by the weak closedness of $M$ in $M_n$. We conclude that $M_n$ is generically weakly simple, and hence $(\rC_0(\N,k),\rho_n)$ is weakly irreducible by Theorem \ref{weak irreducibility criterion}.
\end{proof}

\subsection{Continuous Parabolic Induction of Complex $p$-adic Character}
\label{Continuous Parabolic Induction of Complex p-adic Character}

Next, we construct a $p$-adic family parametrised by $W^2$ in a way similar to one in \cite{ST02} \S 4. For each $U \in B$, we denote by $\nabla_U$ the left multiplication
\be
B & \to & B \\
A & \mapsto & UA.
\ee
We denote by $\pi$ the map
\be
B \times \rC(B,\Cp) & \to & \rC(B,\Cp) \\
(U,f) & \mapsto & f \circ \nabla_{U^{-1}},
\ee
i.e.\ the right regular representation of $B$ over $\Cp$. We denote by $P \subset B$ the closed subgroup of upper triangle regular matrices. We note that although lower triangle matrices are used for the induction in \cite{ST02} \S 4, we use upper triangle matrices in order to reuse the convention in \S \ref{Symmetric Product of Complex p-adic Weight}. The computation does not differ significantly, except that we use LU decomposition rather than Iwahori decomposition and Lazard's theory on divisors for $\Cp[[T]]$ rather than Weierstrass preparation theorem.

\vs
For an $(n,U) \in W^2 \times P$, we set $U^n \coloneqq U_{0,0}^{n(0)} U_{1,1}^{n(1)} \in O_{\Cp}^{\times}$. For an $n \in W^2$, we set
\be
V_n \coloneqq \set{f \in \rC(B,\Cp)}{\forall (A,U) \in B \times P[f(AU) = f(A) U^n]}.
\ee
Then for any $n \in W^2$, $V_n$ is a closed subrepresentation of $(\rC(B,\Cp),\pi)$, and we denote by $\pi_n$ the restriction of $\pi$ to $V_n$. Since $V_n$ is free by \cite{Roo78} Theorem 5.9, $(V_n,\pi_n)$ is a unitary Banach $\Cp$-linear representation of $B$.

\begin{thm}
\label{weak irreducibility of pi_n}
For any $n \in W^2$, if $n(0) - n(1) \notin \N$, then $(V_n,\pi_n)$ is weakly irreducible.
\end{thm}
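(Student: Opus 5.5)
The plan is to reduce Theorem \ref{weak irreducibility of pi_n} to the computation already done in \S \ref{Symmetric Product of Complex p-adic Weight}. We identify $(V_n,\pi_n)$ with a representation on $\rC(\Zp,\Cp)$ via the LU decomposition, and dualise it using Mahler/Amice theory. We then run the same ``highest weight'' argument as in Lemma \ref{highest weight} through Lemma \ref{weight shift}, and conclude by Theorem \ref{weak irreducibility criterion}.

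\emph{Step 1: a model of $V_n$.} Every $A \in B$ factors uniquely as $A = \left( \begin{array}{cc} 1 & 0 \\ x & 1 \end{array} \right) U$ with $x \in p\Zp$ and $U \in P$. Restriction to the lower unipotent part $N^- \cong p\Zp$ therefore gives an isometric $\Cp$-linear isomorphism $V_n \cong \rC(p\Zp,\Cp)$. Under it, $\pi_n$ acts by $U$ sending $f$ to $x \mapsto f(x')\,(\text{cocycle})^n$, where $x' = (d x - c)/(a - b x)$ for $U^{-1} = \left(\begin{smallmatrix} a & b \\ c & d\end{smallmatrix}\right)$ and the cocycle is an explicit element of $O_{\Cp}^{\times}$. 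In particular, $H^{+}$ acts by dilation $x \mapsto ux$ twisted by the character, $F$ acts by translation, and $E$ acts by a Möbius-type map.

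\emph{Step 2: the dual.} By Proposition \ref{weak topology vs inverse limit topology}, the dual $M_n \coloneqq (V_n,\pi_n)^{\vee}$ is identified with $O_{\Cp}[[p\Zp]]$ with the inverse limit topology. Through the Amice transform this is the ring of bounded power series, more precisely the $O_{\Cp}$-module of measures $\lambda$ written in the Mahler coordinates $\lambda(\binom{x/p}{i})$, $i \in \N$. In these coordinates the torus $H^{+}$ acts diagonally with eigenvalues $u^{n(0)-n(1)-i}$ on the $i$-th ``moment'' coordinate $m(i) = \lambda(x^i)$ (normalised). This coordinate is not Mahler, so we have to work with the moments up to bounded denominators, using purity to absorb them. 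The translation by $F$ (respectively the $E$-action) plays the role of lowering (respectively raising) the weight, with the binomial coefficients $\binom{n(0)-n(1)-j}{i-j}$ appearing exactly as in $\rho_{n,i,j}(F^t)$. The hypothesis $n(0)-n(1) \notin \N$ is precisely what makes the leading coefficient $(n_p - i_0)$ nonzero in the analogue of Lemma \ref{highest weight}.

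\emph{Step 3: irreducibility argument.} Let $M \subset M_n$ be a nonzero generically weakly closed pure left $O_{\Cp}[[B]]$-submodule. First, mimicking Lemma \ref{highest weight}, we show that $M$ contains an element with nonzero $0$-th moment, i.e.\ nonzero total mass. Second, we apply the product of the operators $[H^{+}_{1+p}] - (1+p)^{\cdots}[H^{+}_1]$ as in Lemma \ref{weight zero concentration} to kill moments $1,\dots,i_0$. Using weak closedness and purity as in Lemma \ref{weight zero spike}, this yields the Dirac measure $\delta_0 \in M$. Third, the translates of $\delta_0$ by $N^-$ are $\delta_x$ for all $x \in p\Zp$, and by Proposition \ref{universality of measure space} (2) these generate a weakly dense submodule. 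Weak closedness then gives $M = M_n$, so Theorem \ref{weak irreducibility criterion} yields the claim.

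\emph{Main obstacle.} The hard step is the first one, the analogue of Lemma \ref{highest weight}. The induced action of $E$ (a fractional linear map rather than a translation) is expanded as an infinite series in the moment coordinates, and the binomial coefficient $\binom{n(0)-n(1)-\cdot}{\cdot}$ with $n(0)-n(1) \in O_{\Cp}$ must be controlled. The paper flags Lazard's theory of divisors on $\Cp[[T]]$ for this. I would first try to argue via zeros of the Amice transform: a nonzero $N^-$-stable closed submodule whose elements all have vanishing mass would force the Amice transforms to share infinitely many zeros at $p$-power roots of unity. The $H$-equivariance combined with the non-integrality of the weight would then contradict Lazard's divisor theory. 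Failing that, I would fall back to the direct valuation estimate with a large $p$-power parameter $F^{p^{r}}$ as in Lemma \ref{highest weight}.
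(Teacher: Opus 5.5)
The step that fails is the second part of your Step 3, the transfer of Lemmas~\ref{weight zero concentration} and~\ref{weight zero spike}. Parametrise $L$ by $t \in \Zp$ via $t \mapsto F^t$, so that $F^s$ acts on measures by translation by $s$ and $H^{+}_u \delta_t = u^{n(0)} \delta_{u^{-1}t}$.

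In \S\ref{Symmetric Product of Complex p-adic Weight} the $H^{+}$-eigencoordinates are the integral coordinates of $M_n \cong O_{\Cp}^{\N}$. So an element all of whose coordinates are divisible by $i_0!\,p^{i_0}$ is divisible in $M_n$, and purity absorbs the constant. The moments $\lambda(t^i)$ are not integral coordinates of $O_{\Cp}[[\Zp]]$: those are the Mahler coordinates $\lambda(\binom{t}{i})$, and $\binom{t}{i} = t^i/i! + \cdots$. Hence nothing is absorbed. For example, take $\lambda = \delta_1$ and your product $\mu$ of the operators $[H^{+}_{1+p}] - (1+p)^{\cdots}[H^{+}_1]$. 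Then $\mu\delta_1$ is a combination of Dirac measures at the distinct points $(1+p)^{-l}$, $0 \leq l \leq i_0$, with a unit coefficient at $l = i_0$. So it has norm $1$, while its mass has absolute value $|i_0!\,p^{i_0}| \to 0$.

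In fact the conclusion is false under the hypotheses this step uses. Let $\mathfrak{a}$ be the set of measures $\lambda$ with $\lambda(\zeta^t) = 0$ for every primitive $p$-th root of unity $\zeta$.
\begin{itemize}
\item It is cut out by weakly continuous functionals, hence is weakly closed, generically weakly closed and pure.
\item It is stable under $H^{+}$, since $\zeta^{u^{-1}}$ is again primitive, and under $L$, since translation multiplies $\lambda(\zeta^t)$ by $\zeta^s$.
\item It contains $\sum_{a=0}^{p-1}\delta_a$, of mass $p \neq 0$; its Amice transform is $\frac{(1+T)^p-1}{T}$.
\item It does not contain $\delta_0$.
\end{itemize}
So no argument using only $H^{+}$, $L$, purity and weak closedness can pass from ``some element has nonzero mass'' to $\delta_0 \in M$.

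What is missing is the global argument of the paper. Your first part only says that the Amice transforms of elements of $M$ (an ideal $\mathfrak{a}$ of $O_{\Cp}[[T]]$, by $L$-stability) have no common zero at $T = 0$. To conclude one needs three ingredients:
\begin{itemize}
\item Lazard's divisor theory, so that a weakly closed ideal of $\Lambda$ that is not dense has a common zero in the open unit disc (Lemma~\ref{common zero}).
\item $H^{+}$-stability together with countability of zeros, to confine the common zeros to points $\zeta - 1$ with $\zeta$ a $p$-power root of unity (Lemma~\ref{principal}).
\item $E$-stability with non-integral weight, to exclude \emph{each} such point and not only $\zeta = 1$ (Lemma~\ref{p-power root unstability}, via $E^{p^r z}$, termwise differentiation in $z$ and Lemma~\ref{convolutive Vandermonde}).
\end{itemize}

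Your first part is the case $\zeta = 1$ of the last ingredient. There the direct estimate of Lemma~\ref{highest weight} does go through, but with $E^{p^r}$, not the translation $F^{p^r}$ of your fallback. Writing $\kappa = n(0) - n(1)$, one has $(E^{s}\lambda)(t^i) = \sum_{h \in \N} \binom{\kappa_p - i}{h} (ps)^h \lambda(t^{i+h})$ for $s = p^r$, $r \geq 1$, and all moments are bounded by $1$. By contrast, $F^s$ only involves $\binom{i}{j}s^{i-j}$ and moves moments upwards.

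Likewise, vanishing mass is a single common zero at $T = 0$, which $H^{+}$ fixes. It does not force infinitely many common zeros at $p$-power roots of unity.

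Finally, what is really used, both in your leading-coefficient argument and in the paper's proof, is $(n(0)-n(1))_p \notin \N$. For $p$ odd and $(n(0)-n(1))_p \in \N$, the polynomials in $t$ of degree at most $(n(0)-n(1))_p$ form a finite-dimensional subrepresentation even when $n(0)-n(1) \notin \N$.
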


In order to prove Theorem \ref{weak irreducibility of pi_n}, we prepare convention and lemmata. We fix an $n \in W^2$. We denote by $M_n$ the dual of $(V_n,\pi_n)$.

\vs
We set $L \coloneqq \set{F^t}{t \in \Zp}$. By $\# \cO_L = \aleph_0$, $O_k[[L]]$ is cofree as an $O_k$-module by Proposition \ref{weak topology vs inverse limit topology} (1). We denote by $\pr_L \colon B \twoheadrightarrow L$ and $\pr_P \colon B \twoheadrightarrow P$ the projections with respect to the LU decomposition
\be
B & \stackrel{\cong}{\longrightarrow} & L \times P \\
A & \mapsto & \left( \left( \begin{array}{cc} 1 & 0 \\ A_{0,0}^{-1} A_{1,0} & 1 \end{array} \right) , \left( \begin{array}{cc} A_{0,0} & A_{0,1} \\ 0 & A_{0,0}^{-1} \det(A) \end{array} \right) \right).
\ee
For any $f \in \rC(L,\Cp)$, we denote by $f^n$ the continuous map
\be
B & \to & \Cp \\
A & \mapsto & f(\pr_L(A)) \pr_P(A)^n.
\ee
The restriction map $V_n \to \rC(L,\Cp)$ is an isometric $\Cp$-linear isomorphism, as it is a contracting $\Cp$-linear homomorphism admitting a contracting inverse
\be
\rC(L,\Cp) & \to & V_n \\
f & \mapsto & f^n.
\ee
Through its dual $O_{\Cp}$-linear isomorphism $O_{\Cp}[[L]] \cong M_n$, we regard $O_{\Cp}[[L]]$ as a cofree left $O_{\Cp}[[B]]$-module. For any $(A,t) \in B \times \Zp$, we have
\be
A [F^t] & = & \pr_P(A F^t)^n[\pr_L(A F^t)] \\
& = & (A_{0,0} + pt A_{0,1})^{n(0)} ((A_{0,0} + pt A_{0,1})^{-1} \det(A))^{n(1)} \left[ \pr_L \left( \left( \begin{array}{cc} A_{0,0} + pt A_{0,1} & A_{0,1} \\ A_{1,0} + pt A_{1,1} & A_{1,1} \end{array} \right) \right) \right] \\
& = & (A_{0,0} + pt A_{0,1})^{n(0) - n(1)} \det(A)^{n(1)} \left[ F^{\frac{p^{-1} A_{1,0} + t A_{1,1}}{A_{0,0} + pt A_{0,1}}} \right].
\ee
Through the homeomorphic group isomorphism
\be
\Zp & \to & L \\
t & \mapsto & F^t,
\ee
we obtain an isometric $\Cp$-linear isomorphism $\rC(L,\Cp) \to \rC(\Zp,\Cp)$ and its dual $O_{\Cp}$-linear isomorphism $O_{\Cp}[[\Zp]] \to O_{\Cp}[[L]]$. Through the composite of the isomorphism $O_{\Cp}[[\Zp]] \to O_{\Cp}[[L]]$ and the Iwasawa isomorphism $O_{\Cp}[[\Zp]] \cong O_{\Cp}[[T]]$, we regard $O_{\Cp}[[T]]$ as a cofree left $O_{\Cp}[[B]]$-module. For any $(A,t) \in B \times \Zp$, we have
\be
A (1+T)^t = (A_{0,0} + pt A_{0,1})^{n(0) - n(1)} \det(A)^{n(1)} (1+T)^{\frac{p^{-1} A_{1,0} + t A_{1,1}}{A_{0,0} + pt A_{0,1}}}
\ee
by the definition of the left $O_{\Cp}[[B]]$-module structure of $O_{\Cp}[[T]]$. For an $(i,f) \in \N \times O_{\Cp}[[T]]$, we denote by $c_i(f) \in O_{\Cp}$ the coefficient of $f$ of degree $i$.

\vs
Let $(A,f) \in B \times O_{\Cp}[[T]]$. We have
\be
A f & = & A \sum_{i \in \N} c_i(f) T^i = \sum_{i \in \N} c_i(f) A T^i = \sum_{i \in \N} c_i(f) A((1+T)-1)^i \\
& = & \sum_{i \in \N} c_i(f) A \sum_{j=0}^{i} \binom{i}{j} (-1)^{i-j} (1+T)^j = \sum_{i \in \N} c_i(f) \sum_{j=0}^{i} \binom{i}{j} (-1)^{i-j} A (1+T)^j \\
& = & \sum_{i \in \N} c_i(f) \sum_{j=0}^{i} \binom{i}{j} (-1)^{i-j} (A_{0,0} + pj A_{0,1})^{n(0) - n(1)} \det(A)^{n(1)} (1+T)^{\frac{p^{-1} A_{1,0} + j A_{1,1}}{A_{0,0} + pj A_{0,1}}} \\
& = & \det(A)^{n(1)} \sum_{i \in \N} c_i(f) \sum_{j=0}^{i} \binom{i}{j} (-1)^{i-j} (A_{0,0} + pj A_{0,1})^{n(0) - n(1)} (1+T)^{\frac{p^{-1} A_{1,0} + j A_{1,1}}{A_{0,0} + pj A_{0,1}}}.
\ee
Here, the weak convergence of the sum $\sum_{i \in \N} c_i(f) A T^i$ to $A \sum_{i \in \N} c_i(f) T^i$ abstractly follows from Proposition \ref{characterisation of weak topology} (1) and Lemma \ref{automatic continuity of multiplication}. For the reader's convenience, we show the weak convergence in another concrete way.

\vs
Let $i \in \N$. Set $q_p(i) \coloneqq p^{\lfloor \log_p \max \ens{1,i} \rfloor} \in \N$ and $r_p(i) \coloneqq i - q_p(i) \in \N$. We denote by $\mathfrak{a}_i$ the ideal of $O_{\Cp}[[T]]$ generated by $\set{(p^{-r} q_p(i)) ((1+T)^{p^r} - 1)}{r \in \N_{\leq \lfloor \log_p \max \ens{1,i} \rfloor}}$. We have
\be
T^i & = & T^{r_p(i)+q_p(i)} = ((1+T)-1)^{r_p(i)}(((1+T)-1)^{q_p(i)} - (1-1)^{q_p(i)}) \\
& = & \sum_{j=0}^{r_p(i)} (-1)^{r_p(i)-j} \binom{r_p(i)}{j} (1+T)^j \sum_{k=0}^{q_p(i)} (-1)^{q_p(i)-k} \binom{q_p(i)}{k} ((1+T)^k - 1) \\
& = & \sum_{j=0}^{r_p(i)} \sum_{k=0}^{q_p(i)} (-1)^{r_p(i)-j+q_p(i)-k} \binom{r_p(i)}{j} \binom{q_p(i)}{k} ((1+T)^{j+k} - (1+T)^j) \\
& \in & \mathfrak{a}_i
\ee
by $\binom{p^{h_0}}{p^{h_1}u} \in p^{h_0 - h_1} \Z$ for any $(h_0,h_1,u) \in \N^3$ with $h_0 \geq h_1$ and $\gcd(p,u) = 1$ by Kummer's theorem. Therefore, we obtain
\be
A T^i & = & \sum_{j=0}^{r_p(i)} \sum_{k=0}^{q_p(i)} (-1)^{r_p(i)-j+q_p(i)-k} \binom{r_p(i)}{j} \binom{q_p(i)}{k} (A(1+T)^{j+k} - A(1+T)^j) \\
& = & \sum_{j=0}^{r_p(i)} \sum_{k=0}^{q_p(i)} (-1)^{r_p(i)-j+q_p(i)-k} \binom{r_p(i)}{j} \binom{q_p(i)}{k} \\
& & \left( (A_{0,0} + p(j+k) A_{0,1})^{n(0) - n(1)} \det(A)^{n(1)} (1+T)^{\frac{p^{-1} A_{1,0} + (j+k) A_{1,1}}{A_{0,0} + p(j+k) A_{0,1}}} \right. \\
& & \ \ \left. - (A_{0,0} + pj A_{0,1})^{n(0) - n(1)} \det(A)^{n(1)} (1+T)^{\frac{p^{-1} A_{1,0} + j A_{1,1}}{A_{0,0} + pj A_{0,1}}} \right) \\
& = & \det(A)^{n(1)} \sum_{j=0}^{r_p(i)} \sum_{k=0}^{q_p(i)} (-1)^{r_p(i)-j+q_p(i)-k} \binom{r_p(i)}{j} \binom{q_p(i)}{k} \\
& & \left( (A_{0,0} + p(j+k) A_{0,1})^{n(0) - n(1)} (1+T)^{\frac{p^{-1} A_{1,0} + (j+k) A_{1,1}}{A_{0,0} + p(j+k) A_{0,1}}} \right. \\
& & \ \ \left. - (A_{0,0} + pj A_{0,1})^{n(0) - n(1)} (1+T)^{\frac{p^{-1} A_{1,0} + j A_{1,1}}{A_{0,0} + pj A_{0,1}}} \right) \\
& \in & \mathfrak{a}_i.
\ee
Since $(\mathfrak{a}_i)_{i \in \N}$ forms a fundamental system of neighbourhoods of $0 \in O_{\Cp}[[T]]$ for the weak topology by Proposition \ref{characterisation of weak topology} (1), the sum $\sum_{i \in \N} c_i(f) A T^i$ actually weakly converges to $A \sum_{i \in \N} c_i(f) T^i$.

\vs
For the use in an explicit computation later, we set $\rho_{n,i}(A) \coloneqq \det(A)^{-n(1)} A T^i$ for each $i \in \N$. For any $i \in \N$, we have seen
\be
\rho_{n,i}(A) = \sum_{j=0}^{i} \binom{i}{j} (-1)^{i-j} (A_{0,0} + pj A_{0,1})^{n(0) - n(1)} (1+T)^{\frac{p^{-1} A_{1,0} + j A_{1,1}}{A_{0,0} + pj A_{0,1}}},
\ee
in the first computation of $A \sum_{i \in \N} c_i(f) T^i$ using Lemma \ref{automatic continuity of multiplication}, and $\rho_{n,i}(A) \in \mathfrak{a}_i$ in the second computation of $A \sum_{i \in \N} c_i(f) T^i$ using Kummer's theorem. We have
\be
A f = \det(A)^{n(1)} \sum_{i \in \N} c_i(f) \rho_{n,i}(A).
\ee
In particular, we have
\be
H^{+}_u f & = & u^{n(1)} \sum_{i \in \N} c_i(f) \sum_{j=0}^{i} \binom{i}{j} (-1)^{i-j} u^{n(0) - n(1)} (1+T)^{u^{-1}j} \\
& = & u^{n(0)} \sum_{i \in \N} c_i(f) \left( (1+T)^{u^{-1}} - 1 \right)^i = u^{n(0)} f \left( (1 + T)^{u^{-1}} - 1 \right)
\ee
for any $(u,f) \in \Zp^{\times} \times O_{\Cp}[[T]]$, and
\be
E^t f = \sum_{i \in \N} c_i(f) \sum_{j=0}^{i} \binom{i}{j} (-1)^{i-j} (1 + pjt)^{n(0) - n(1)} (1+T)^{\frac{j}{1 + pjt}}
\ee
for any $(t,f) \in \Zp \times O_{\Cp}[[T]]$.

\vs
We denote by $\Lambda$ the Banach $\Cp$-algebra $\Cp \otimes_{O_{\Cp}} O_{\Cp}[[T]]$ equipped with the gauge $\n{\cdot}_{O_{\Cp}[[T]]}$ (cf.\ Definition \ref{saturated}). See \cite{BGR84} \S 3.7.1 for the notion of a Banach algebra. For a subset $\mathfrak{a} \subset \Lambda$, we set $V(\mathfrak{a}) \coloneqq \set{z \in O_{\Cp}}{\v{z} < 1 \land \forall f \in \mathfrak{a}[f(z) = 0]}$. The weak topology on $O_{\Cp}[[T]]$ coincides with the topology used in \cite{Laz62} by Proposition \ref{characterisation of weak topology} (2), and hence results in \cite{Laz62} are applicable to it.

\begin{lmm}
\label{common zero}
Let $\mathfrak{a} \subset \Lambda$ be an ideal. Then the following are equivalent:
\bi
\item[(1)] The ideal $\mathfrak{a}$ is weakly dense.
\item[(2)] The set $V(\mathfrak{a})$ is empty.
\ei
\end{lmm}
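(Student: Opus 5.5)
The direction (1)$\Rightarrow$(2) goes by contraposition and evaluation maps. For (2)$\Rightarrow$(1) the plan is to control the weak closure by Lazard's divisor theory on the open unit disc.

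\textbf{(1)$\Rightarrow$(2).} Suppose $z \in V(\mathfrak{a})$. Consider the evaluation map $\ev_z \colon O_{\Cp}[[T]] \to O_{\Cp}$, $f \mapsto f(z)$. It converges since $\v{z} < 1$. It is $O_{\Cp}$-linear, so by Theorem \ref{automatic continuity} it is weakly continuous. It is also bounded by $1$, so it belongs to $O_{\Cp}[[T]]^{\vee}$. Its scalar extension to $\Lambda$ is therefore weakly continuous. It vanishes on $\mathfrak{a}$ but not on $1$, so $\mathfrak{a}$ lies in the proper weakly closed hyperplane $\ker(\ev_z)$. Hence $\mathfrak{a}$ is not weakly dense.

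\textbf{(2)$\Rightarrow$(1).} Let $\ol{\mathfrak{a}}$ be the weak closure of $\mathfrak{a}$. It is again an ideal, since multiplication by a fixed element of $\Lambda$ is weakly continuous by Lemma \ref{automatic continuity of multiplication} or Theorem \ref{automatic continuity}. It suffices to show $1 \in \ol{\mathfrak{a}}$. By Proposition \ref{characterisation of weak topology} (2), the weak topology on $O_{\Cp}[[T]]$ is Lazard's topology, i.e.\ uniform convergence on closed discs of radius $r < 1$. So $\ol{\mathfrak{a}} \cap O_{\Cp}[[T]]$ is a closed ideal of $O_{\Cp}[[T]]$ in the sense of \cite{Laz62}, and I would invoke Lazard's classification of closed (divisorial) ideals of bounded analytic functions on the open disc.

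\begin{itemize}
\item For each $r < 1$, the image of $\mathfrak{a}$ in the Tate algebra of the closed disc of radius $r$ (with $r \in \v{\Cp}$) is an ideal of a principal ideal domain.
\item That image is generated by a polynomial whose zeros are the common zeros of $\mathfrak{a}$ in the disc. Finiteness of zeros follows from Weierstrass preparation, as in \cite{BGR84}.
\item Since $V(\mathfrak{a}) = \emptyset$, this image is the unit ideal.
\end{itemize}

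The conclusion is that for each $r$ there exists $g_r \in \mathfrak{a}$ with $\sup_{\v{z} \leq r} \v{g_r(z) - 1} \leq \epsilon$. One has to be careful that this comes from an actual element of $\mathfrak{a}$, possibly after rescaling by a constant in $\Cp$, rather than from a general element of the completed ideal. This is fine, because a finite $\Lambda$-combination of elements of $\mathfrak{a}$ approximating $1$ on the disc can be chosen.

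\textbf{Main obstacle.} The difficulty is uniformity. Weak neighbourhoods of $0$ in $\Lambda$ are not simply discs of radius $r$, because $\Lambda$ is the Banach localisation. By Proposition \ref{cofree scalar extension} (2) and the characterisation in Proposition \ref{characterisation of weak topology} (2), a weak neighbourhood basis on bounded subsets is given by the sets $L_{r,\epsilon}$. So I need approximants $g_r$ that are bounded in gauge norm uniformly in $r$, and this is exactly the nontrivial content of Lazard's theorem on the closure of ideals in $O_{\Cp}[[T]]$, which I will cite. Concretely, this is Lazard's result that an ideal of bounded functions without common zeros is dense in his topology; it uses the non-spherical completeness (density of $\v{\Cp}$) and his divisor theory.

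If the citation does not directly cover ideals of $\Lambda$, I would reduce to $O_{\Cp}[[T]]$. The set $\mathfrak{a} \cap O_{\Cp}[[T]]$ generates $\mathfrak{a}$ and has the same zeros, so the problem transfers. Then $1 \in \ol{\mathfrak{a}}$ gives weak density, since $\ol{\mathfrak{a}}$ is an ideal.
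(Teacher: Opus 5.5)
Your proposal is correct and follows essentially the paper's route. For (1)$\Rightarrow$(2), both use the same observation: evaluation at a point of $V(\mathfrak{a})$ is weakly continuous, vanishes on $\mathfrak{a}$, and does not vanish on $1$. For (2)$\Rightarrow$(1), the paper likewise rests entirely on Lazard's correspondence between closed ideals and divisors (\cite{Laz62} Proposition 10), phrased by contradiction: an ideal that is not weakly dense would correspond to a non-trivial divisor and so have a common zero.

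Your disc-by-disc Tate-algebra argument is only a heuristic, as you say. The uniform boundedness needed for weak convergence in $\Lambda$ is exactly what the Lazard citation supplies. Your reduction to $\mathfrak{a} \cap O_{\Cp}[[T]]$ via Proposition \ref{cofree scalar extension} (2) is fine.
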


\begin{proof}
First, assume (1). Then there exists a net in $\mathfrak{a}$ weakly convergent to $1 \in O_{\Cp}[[T]]$, and hence every element of $V(\mathfrak{a})$ is a zero of $1$, i.e.\ $V(\mathfrak{a}) = \emptyset$.

\vs
Next, assume (2). Then we have $\mathfrak{a} \neq \ens{0}$. If $\mathfrak{a}$ were not weakly dense in $\Lambda$, then $\mathfrak{a}$ should correspond to a non-trivial divisor by \cite{Laz62} Proposition 10, which would imply $V(\mathfrak{a}) \neq \emptyset$.
\end{proof}

\begin{lmm}
\label{principal}
Let $\mathfrak{a} \subset \Lambda$ be a weakly closed $H^{+}$-stable ideal. Then $1+z$ is a $p$-power root of $1$ for any $z \in V(\mathfrak{a})$, and there exists an $f \in \mathfrak{a}$ such that $V(\ens{f}) = V(\mathfrak{a})$.
\end{lmm}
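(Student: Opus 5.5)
The plan is to exploit the explicit $H^{+}$-action computed above, $H^{+}_u f = u^{n(0)} f((1+T)^{u^{-1}} - 1)$, which on points of the open unit disc acts by $z \mapsto (1+z)^{u^{-1}} - 1$. First I show that $V(\mathfrak{a})$ is stable under these substitutions. Let $z \in V(\mathfrak{a})$, $u \in \Zp^{\times}$ and $f \in \mathfrak{a}$. Stability of $\mathfrak{a}$ under $H^{+}_{u}$ gives $u^{n(0)} f((1+T)^{u} - 1) \in \mathfrak{a}$, applying the displayed formula with $u^{-1}$ in place of $u$. Evaluating at $z$ shows that $f$ vanishes at $(1+z)^{u} - 1$. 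Since $u$ is arbitrary, $V(\mathfrak{a})$ is stable under $z \mapsto (1+z)^{u} - 1$ for all $u \in \Zp^{\times}$.

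Next I bring in Lazard's theory. If $\mathfrak{a} = \ens{0}$, the first claim is vacuous as soon as $V(\ens{0})$ is taken in the obvious sense, and $f = 0$ works; so assume $\mathfrak{a} \neq \ens{0}$. Since $\mathfrak{a}$ is weakly closed, \cite{Laz62} (the same Proposition 10 used in Lemma \ref{common zero}) should identify $\mathfrak{a}$ with the ideal attached to a divisor $D$ on the open unit disc. The support of $D$ is exactly $V(\mathfrak{a})$, and it is locally finite: only finitely many points lie in each closed disc $\v{z} \leq r < 1$. Take $z \in V(\mathfrak{a})$ and set $w \coloneqq 1+z$. The orbit $\set{w^{u} - 1}{u \in \Zp^{\times}}$ lies in a single circle $\v{\cdot} = \v{z}$, because $\v{w^u - 1} = \v{w-1}$ for a unit exponent $u$. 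Local finiteness therefore forces the orbit to be finite. I then show that a finite orbit forces $w$ to be a $p$-power root of unity. The map $u \mapsto w^u$ is continuous and injective on $1 + q_p \Zp$ unless $w$ is torsion, by the logarithm. Indeed, $w^{u} = w^{u'}$ means $w^{u-u'} = 1$, and a non-torsion $w$ has $\log w \neq 0$, which forces $u = u'$. Since $1+q_p\Zp$ is infinite, a finite orbit gives a non-trivial relation $w^{a} = 1$, and the only torsion in $1 + \mathfrak{m}_{\Cp}$ is $p$-power roots of unity. This proves the first claim.

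For the second claim, the same argument shows $V(\mathfrak{a})$ is a union of finitely many Galois-type orbits $\set{\zeta^{u} - 1}{u}$, with $\zeta$ of order $p^{r}$. Only finitely many orders can occur, since the radii $\v{\zeta - 1}$ increase to $1$ and the support is locally finite while $V(\mathfrak{a})$ lies in the support; I expect Lazard's description to also bound this. So $V(\mathfrak{a})$ is contained in the zero set of $(1+T)^{p^{R}} - 1$ for some $R$, and $V(\mathfrak{a}) = \set{\zeta - 1}{\zeta \in S}$ for a finite union $S$ of full sets of primitive $p^r$-th roots of unity. Let $g \coloneqq \prod_{r} \Phi_{p^r}(1+T)$ over the orders occurring, with multiplicities $e_r$ given by $D$. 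Then $g$ generates the principal ideal of the divisor of $\mathfrak{a}$; in fact, because $D$ is finite, Lazard's theory should give $\mathfrak{a} = g\Lambda$ up to saturation. Scaling $g$ into $\mathfrak{a}$ by a constant of $\Cp^{\times}$, which is allowed since $\mathfrak{a}$ is an ideal of $\Lambda$, gives $f \in \mathfrak{a}$ with $V(\ens{f}) = V(\mathfrak{a})$. If I only want the zero-set statement, it is simpler to argue directly: finiteness of $V(\mathfrak{a})$ gives, for each point, some element not vanishing at the other points, and a suitable linear combination of finitely many elements avoids the extraneous zeros of any single element within a fixed radius. I would fall back on this if the divisor description is awkward.

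The main obstacle is the precise input from \cite{Laz62}. I need that a weakly closed nonzero ideal of $\Lambda$ is determined by a locally finite divisor and has zero set exactly its support. I also need to rule out an infinite, unbounded-order collection of roots of unity in $V(\mathfrak{a})$. This matters because an infinite divisor supported on all $\zeta - 1$ is locally finite yet has no finite generator; in that case I must check whether $V(\ens{f}) = V(\mathfrak{a})$ can still be achieved. I would try a single element realising the whole divisor, since Lazard shows closed ideals are principal in the relevant bounded setting.
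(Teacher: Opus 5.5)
Your proof of the first assertion is sound and is essentially the paper's. $H^{+}$-stability makes $V(\mathfrak{a})$ stable under $z\mapsto(1+z)^{u}-1$, and a non-torsion $1+z$ would give an injective orbit map on $\Zp^{\times}$. The paper rules this out by countability of $V(\mathfrak{a})$. You rule it out by finiteness of the zeros of one nonzero element on the circle $|\cdot|=|z|$; neither route needs Lazard's Proposition 10. One correction: the case $\mathfrak{a}=\ens{0}$ is not vacuous. $V(\ens{0})$ is the whole open unit disc, so the first assertion is false there, and one must assume $\mathfrak{a}\neq\ens{0}$, as the paper's proof tacitly does.

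The genuine gap is in the second assertion. Your plan needs only finitely many orders $p^{r}$ to occur in $V(\mathfrak{a})$, and the reason you give does not prove this. Local finiteness in the open disc allows points accumulating towards $|z|=1$, which is exactly the case you leave open in your last paragraph. Your fallback does not help either: an element of $\mathfrak{a}$ may have infinitely many extraneous zeros accumulating at the boundary, so avoiding them within a fixed radius does not give $V(\ens{f})=V(\mathfrak{a})$.

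The missing input is that elements of $\Lambda$ are bounded. Pick $0\neq h\in\mathfrak{a}\cap O_{\Cp}[[T]]$ and write $h=T^{m}g$ with $g(0)\neq0$. Normalise $v_p(p)=1$. The Newton polygon of $g$ starts at height $v_p(g(0))$ and never goes below $0$. Hence the zeros of $g$ in the open disc, counted with multiplicity, satisfy $\sum v_p(z)\le v_p(g(0))$. By your first part and $H^{+}$-stability, each level $r\geq1$ meeting $V(\mathfrak{a})$ is the full set of $\zeta-1$ with $\zeta$ a primitive $p^{r}$-th root of $1$. These are $p^{r-1}(p-1)$ points of valuation $1/(p^{r-1}(p-1))$, so each level contributes exactly $1$ to the sum. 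Hence at most $v_p(g(0))$ levels occur.

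With this in place, let $d_r$ be the minimal multiplicity of elements of $\mathfrak{a}$ at level $r$. Then $T^{d_0}\prod_{r\geq1}\Phi_{p^{r}}(1+T)^{d_r}$ is a polynomial in $\Zp[T]$ with zero set $V(\mathfrak{a})$. Lazard's correspondence between weakly closed ideals and divisors (\cite{Laz62} Proposition 10) puts it in $\mathfrak{a}$. This is exactly the paper's argument, which builds the divisor $D$ from the $d_r$ and realises it via \cite{Laz62} Theorem 1. The paper writes $D$ as a product over all $r$ without comment. The estimate above is precisely what forces $d_r=0$ for large $r$, so that $D$ is realised by an element of $\Zp[[T]]$ at all. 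For comparison, the divisor of $\log(1+T)$ is not that of any bounded function.
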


\begin{proof}
Let $(z,u) \in V(\mathfrak{a}) \times \Zp^{\times}$. For any $f \in \mathfrak{a}$, we have
\be
f((1 + z)^u - 1) = u^{n(0)} (H^{+}_{u^{-1}} f)(z) = 0
\ee
by the $H^{+}$-stability of $\mathfrak{a}$. This implies $(1 + z)^u - 1 \in V(\mathfrak{a})$. On the other hand, $V(\mathfrak{a})$ is countable by by the property of Newton polygon. This implies that $\set{(1+z)^u-1}{u \in \Zp^{\times}}$ is a finite set, i.e.\ $1+z$ is a $p$-power root of $1$.

\vs
For each $r \in \N$, we denote by $d_r \in \N$ the maximum of a $d \in \N$ such that there is a $z \in \Cp$ such that $1+z$ is a primitive $p^r$-th root of $1$ and for any $f \in \mathfrak{a} \setminus \ens{0}$, the multiplicity of zero of $f$ at $z$ is not less than $d$. We denote by $D$ the formal product of the divisors associated to $T^{d_0} \Zp[T]$ and $(\frac{(1+T)^{p^{r+1}}-1}{p((1+T)^{p^{r}}-1)})^{d_{r+1}} \in 1 + T \Zp[T]$ with $r \in \N$ (cf.\ \cite{Laz62} (4.3) and (4.9)). Since each component of $D$ is defined by an element of $\Zp[T]$, there exists an $f \in \Zp[[T]]$ corresponding to $D$ by \cite{Laz62} Theorem 1. By the construction, $D$ corresponds to $\mathfrak{a}$ by the correspondence in \cite{Laz62} Proposition 10. Therefore, we have $f \in \mathfrak{a}$ by weak closedness of $\mathfrak{a}$ and \cite{Laz62} Proposition 10.
\end{proof}

\begin{lmm}
\label{convolutive Vandermonde}
Let $k$ be a complete valuation field of characteristic $0$. For any $(c,\epsilon) \in \ell^{\infty}(\N,k) \times O_k$ with $\v{\epsilon} < 1$, if the infinite sum
\be
\sum_{i \in \N} c(i) \sum_{j=0}^{i} (-1)^{i-j} \binom{i}{j} j^h (1 + \epsilon)^j
\ee
converges to $0$ in $k$ for any $h \in \N$, then $c = 0$ holds.
\end{lmm}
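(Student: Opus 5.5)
The plan is to recast the hypothesis as the vanishing of all derivatives of a single power series at one point, and then use the identity theorem for power series on the open unit disc. Set
\[
F(T) \coloneqq \sum_{i \in \N} c(i) T^i \in k[[T]].
\]
Since $c \in \ell^{\infty}(\N,k)$, the series $F(t)$ converges for every $t \in k$ with $\v{t} < 1$.

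\emph{Step 1: rewrite the hypothesis.} For each $i$, the binomial theorem gives
\[
\sum_{j=0}^{i} (-1)^{i-j} \binom{i}{j} x^j = (x-1)^i .
\]
Let $\theta \coloneqq x \frac{d}{dx}$, which acts on monomials by $\theta x^j = j x^j$. Applying $\theta^h$ to both sides, the inner sum in the statement equals $(\theta^h (x-1)^i)|_{x = 1+\epsilon}$. So the hypothesis says
\[
\sum_{i \in \N} c(i)\, (\theta^h (x-1)^i)|_{x=1+\epsilon} = 0 \quad \text{for all } h \in \N .
\]

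\emph{Step 2: pass from $\theta^h$ to ordinary derivatives.} Write $D \coloneqq \frac{d}{dx}$. There is a Stirling-type identity
\[
\theta^h = \sum_{m=0}^{h} S(h,m)\, x^m D^m, \qquad S(h,m) \in \Z,\ S(h,h) = 1 .
\]
This relation is unitriangular. Moreover $x_0 \coloneqq 1+\epsilon$ is a unit, since $\v{\epsilon} < 1$ gives $\v{x_0} = 1$. So by induction on $h$ the hypothesis is equivalent to
\[
a_m \coloneqq \sum_{i \in \N} c(i)\, D^m (x-1)^i \big|_{x=x_0} = \sum_{i \geq m} c(i)\, \frac{i!}{(i-m)!}\, \epsilon^{i-m} = 0 \quad \text{for all } m \in \N .
\]
Each of these series converges, because $\v{c(i) \tfrac{i!}{(i-m)!} \epsilon^{i-m}} \leq \n{c}\, \v{\epsilon}^{i-m} \to 0$. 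The induction only takes finite linear combinations of convergent series, so no exchange of limits is needed here.

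\emph{Step 3: Taylor re-expansion at $\epsilon$ (the main point).} Since $\ch(k) = 0$, we may divide by $m!$, so $a_m/m! = \sum_{i} c(i) \binom{i}{m} \epsilon^{i-m} = 0$ for every $m$. Now fix $s \in k$ with $\v{s} < 1$, and put $r \coloneqq \max \ens{\v{\epsilon}, \v{s}} < 1$. Then
\[
F(\epsilon + s) = \sum_{i} c(i) (\epsilon + s)^i = \sum_{i} \sum_{m=0}^{i} c(i) \binom{i}{m} \epsilon^{i-m} s^m .
\]
Every term of this double series has absolute value at most $\n{c}\, r^i$, and these bounds tend to $0$. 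In a complete ultrametric field such a summable double family may be summed in any order. Summing over $i$ first, for each fixed $m$, gives
\[
F(\epsilon + s) = \sum_{m} \frac{a_m}{m!}\, s^m = 0 .
\]
Hence $F(t) = 0$ for every $t$ with $\v{t} < 1$, because $\ens{\epsilon + s}{}$ with $\v{s} < 1$ is the whole open unit disc.

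\emph{Step 4: conclude $c = 0$.} The open unit disc contains infinitely many points accumulating at $0$, because the valuation of $k$ is non-trivial. If $c \neq 0$, let $i_0$ be the least index with $c(i_0) \neq 0$. For $t$ small enough and non-zero, the term $c(i_0) t^{i_0}$ strictly dominates all the others, so $F(t) \neq 0$. This contradicts Step 3, hence $c = 0$.

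The step needing care is the rearrangement in Step 3. It is justified exactly by the uniform bound $\n{c}\, r^i$ with $r < 1$, which is where the hypotheses $c \in \ell^{\infty}(\N,k)$ and $\v{\epsilon} < 1$ enter. The characteristic $0$ assumption is used only to divide by $m!$.
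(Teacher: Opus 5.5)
Your proof is correct and is essentially the paper's argument. The paper also takes the generating function $f$ of $c$ and applies $D=(1+z)\frac{d}{dz}$ (your $\theta = x\frac{d}{dx}$ with $x=1+z$) to get $(D^h f)(\epsilon)=0$ for all $h$; it then argues that a nonzero $f$ has a finite zero multiplicity $h_0$ at $\epsilon$, so $D^{h_0}f$ cannot vanish there because $1+\epsilon\neq 0$. You make this explicit by inverting the unitriangular Stirling relation and re-expanding $F$ at $\epsilon$, which also justifies the interchanges of limits that the paper's termwise application of $D^h$ leaves implicit.
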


\begin{proof}
We denote by $f \in k[[z]]$ the generating function of $c$. By $c \in \ell^{\infty}(\N,k)$, $f$ is convergent on $\set{z \in O_k}{\v{z} < 1}$. We denote by $D$ the differential operator $(1+z) \frac{d}{dz}$ on $k[[z]]$. For any $h \in \N$, we have
\be
D^h f & = & D^h \sum_{i \in \N} c(i) z^i = D^h \sum_{i \in \N} c(i) ((1+z)-1)^i = D^h \sum_{i \in \N} c(i) \sum_{j=0}^{i} (-1)^{i-j} \binom{i}{j} (1+z)^j \\ 
& = & \sum_{i \in \N} c(i) \sum_{j=0}^{i} (-1)^{i-j} \binom{i}{j} j^h (1+z)^j,
\ee
and hence $(D^h f)(\epsilon) = 0$. Assume $c \neq 0$. We denote by $h_0 \in \N$ the multiplicity of zero of $f$ at $\epsilon$, which makes sense by $c \neq 0$. Then the multiplicity of zero of $D^{h_0} f$ at $\epsilon$ is $0$ by $1 + \epsilon \neq 0$, which contradicts $(D^{h_0} f)(\epsilon) = 0$.
\end{proof}

\begin{lmm}
\label{p-power root unstability}
Let $f \in \Lambda$. If there is a $z_0 \in V(\set{E^t f}{t \in \Zp})$ such that $1+z_0$ is a $p$-power root of $1$, then either $(n(0) - n(1))_p \in \N$ or $f = 0$ holds.
\end{lmm}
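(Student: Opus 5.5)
We use the explicit formula for $E^t$ acting on $O_{\Cp}[[T]]$ (extended $\Cp$-linearly to $\Lambda$). After scaling we may assume $f \in O_{\Cp}[[T]]$, so $c \coloneqq (c_i(f))_{i \in \N} \in \ell^{\infty}(\N,\Cp)$. Write $\zeta \coloneqq 1+z_0$, a $p$-power root of $1$, and set $\epsilon \coloneqq z_0$, so $\v{\epsilon} < 1$. Put $\nu \coloneqq n(0)-n(1) \in W$.

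\textbf{Step 1: evaluate at $z_0$.} For every $t \in \Zp$ the hypothesis gives
\[
0 = (E^t f)(z_0) = \sum_{i \in \N} c(i) \sum_{j=0}^{i} \binom{i}{j} (-1)^{i-j} (1+pjt)^{\nu} \zeta^{\frac{j}{1+pjt}}.
\]
Evaluation at $z_0$ is weakly continuous on $O_{\Cp}[[T]]$, so the interchange of sums is justified by the weak convergence already established. Since $\zeta$ has $p$-power order $p^r$ and $\frac{j}{1+pjt} \equiv j \bmod p^r$ once $t \in p^{r}\Zp$, we have $\zeta^{\frac{j}{1+pjt}} = \zeta^j$ for such $t$. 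Hence, for $t \in p^r \Zp$,
\[
0 = \sum_{i} c(i) \sum_{j=0}^{i} \binom{i}{j} (-1)^{i-j} (1+pjt)^{\nu} \zeta^j.
\]

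\textbf{Step 2: expand in $t$.} On $1+p\Zp$ (or $1+4\Z_2$ after shrinking $t$), the character $\nu$ is $x \mapsto x^{\nu_p}$. This gives
\[
(1+pjt)^{\nu} = \sum_{h \in \N} \binom{\nu_p}{h} p^h t^h j^h.
\]
Substituting, we obtain a power series in $t$, convergent on a small disc $t \in p^{r'}\Zp$, that vanishes identically there. By the identity theorem each coefficient vanishes:
\[
\binom{\nu_p}{h} p^h \sum_i c(i) \sum_j \binom{i}{j}(-1)^{i-j} j^h \zeta^j = 0 \quad \text{for all } h \in \N.
\]
Justifying the interchange needs the estimate $\v{\binom{\nu_p}{h}p^h} \leq \v{p}^{h(1-\frac{1}{p-1})}$ together with the fact that the inner sums are bounded by $\n{c}$. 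We would bound them through the operator $D^h$ applied to the generating function, as in Lemma~\ref{convolutive Vandermonde}. This is the main obstacle: the double sum over $i$ and $h$ needs uniform control, and the plan is to group by $h$ first, using that $D^h f$ converges on the open unit disc and is evaluated at $\epsilon$ with $\v{\epsilon}<1$.

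\textbf{Step 3: conclude.} If $\nu_p \notin \N$, then $\binom{\nu_p}{h} \neq 0$ for every $h$. So all sums $\sum_i c(i)\sum_j (-1)^{i-j}\binom{i}{j} j^h(1+\epsilon)^j$ vanish, and Lemma~\ref{convolutive Vandermonde} gives $c=0$, that is, $f=0$. Otherwise $\nu_p = (n(0)-n(1))_p \in \N$. This is exactly the dichotomy in the statement.
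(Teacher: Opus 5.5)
Your proposal follows essentially the paper's route. You restrict to $t\in p^r\Zp$ so that $(1+z_0)^{j/(1+pjt)}=(1+z_0)^j$, expand $(1+pjt)^s$ with $s=(n(0)-n(1))_p$, and use the identity theorem to extract $\binom{s}{h}p^h\sum_i c(i)\sum_j\binom{i}{j}(-1)^{i-j}j^h(1+z_0)^j=0$; you then finish with $\binom{s}{h}\neq 0$ and Lemma~\ref{convolutive Vandermonde}, exactly as the paper does.

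The interchange you flag as the main obstacle is precisely where the paper does its work: it bounds the Gauss norms $\n{g_i}$ in $\Cp\ens{z}$ via Kummer's theorem, so that $\sum_i c_i(f)g_i$ converges in the Tate algebra and can be differentiated termwise. The ingredients you list do close it. Put $b_{i,h}\coloneqq\sum_j\binom{i}{j}(-1)^{i-j}j^h(1+z_0)^j=(D^hz^i)|_{z=z_0}$, with $D=(1+z)\frac{d}{dz}$ as in Lemma~\ref{convolutive Vandermonde}. Since $D^hz^i\in z^{i-h}\Z[z]$ for $i\geq h$, you get $\v{b_{i,h}}\leq 1$ always and $\v{b_{i,h}}\leq\v{z_0}^{i-h}$ for $i\geq h$. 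Meanwhile $\v{\binom{s}{h}p^ht^h}\leq\v{p}^{h(2-\frac{1}{p-1})}$ for $t\in p\Zp$. Hence the double family $c(i)\binom{s}{h}p^ht^hb_{i,h}$ tends to $0$ as $i+h\to\infty$ and may be summed in either order.

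This direct estimate is a little simpler than the paper's Kummer-theorem bound. Taking $r\geq 1$ also disposes of $p=2$ without a separate shrinking step.
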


\begin{proof}
Take an $r \in \N$ with $(1+z_0)^{p^r} = 1$. For any $t \in \Zp$, we have
\be
0 = (E^t f)(z_0) = \sum_{i \in \N} c_i(f) \sum_{j=0}^{i} \binom{i}{j} (-1)^{i-j} (1 + pjt)^{n(0) - n(1)} (1+z_0)^{\frac{j}{1 + pjt}}.
\ee
Set $s \coloneqq (n(0) - n(1))_p \in O_{\Cp}$. Substituting $t = p^r z$ with $z \in \Zp$ to the equation above, we obtain
\be
0 & = & \sum_{i \in \N} c_i(f) \sum_{j=0}^{i} \binom{i}{j} (-1)^{i-j} (1 + p^{r+1}jz)^{n(0) - n(1)} (1+z_0)^{\frac{j}{1 + p^{r+1}jz}} \\
& = & \sum_{i \in \N} c_i(f) \sum_{j=0}^{i} \binom{i}{j} (-1)^{i-j} (1 + p^{r+1}jz)^s (1+z_0)^j.
\ee
Let $i \in \N$. We denote by $g_i(z) \in \Cp \ens{z}$ the Taylor expansion of the function
\be
\sum_{j=0}^{i} \binom{i}{j} (-1)^{i-j} (1 + p^{r+1}jz)^s (1+z_0)^j.
\ee
appeared in the sum above, and by $\mathfrak{b}_i$ the ideal of $O_{\Cp}$ generated by
\be
\set{(p^{-k} q_p(i))((1+z_0)^{p^k} - 1)}{k \in \N_{\leq \lfloor \log_p \max \ens{1,i} \rfloor}}.
\ee
For any $z \in \Zp$, we have $g_i(z) = \rho_{n,i}(E^{p^r z}) |_{T = z_0}$ by the definition of $g_i$ and $\rho_{n,i}$, and also
\be
& & \rho_{n,i}(E^{p^r z}) |_{T = z_0} \\
& = & \det(E^{p^r z})^{n(1)} \sum_{j=0}^{r_p(i)} \sum_{k=0}^{q_p(i)} (-1)^{r_p(i)-j+q_p(i)-k} \binom{r_p(i)}{j} \binom{q_p(i)}{k} \\
& & \left( (1 + p(j+k) p^r z)^{n(0) - n(1)} (1+z_0)^{\frac{p^{-1} \cdot 0 + (j+k) \cdot 1}{1 + p(j+k) \cdot p^r z}} - (1 + pj \cdot p^r z)^{n(0) - n(1)} (1+z_0)^{\frac{p^{-1} \cdot 0 + j \cdot 1}{1 + pj \cdot p^r z}} \right) \\
& = & \sum_{j=0}^{r_p(i)} \sum_{k=0}^{q_p(i)} (-1)^{r_p(i)-j+q_p(i)-k} \binom{r_p(i)}{j} \binom{q_p(i)}{k} \\
& & \left( (1 + p^{r+1}(j+k)z)^s (1+z_0)^{j+k} - (1 + p^{r+1}jz)^s (1+z_0)^j \right) \\
& \in & \mathfrak{b}_i O_{\Cp} \ens{z}
\ee
by the argument in the second computation of $A \sum_{i \in \N} c_i(f) T^i$ for an $A \in B$ using Kummer's theorem. By $\mathfrak{b}_i \subset p^{-r+1} q_p(i) O_{\Cp}$, we obtain $\n{g_i} \leq \v{p}^{-r + 1 + \lfloor \log_p \max \ens{1,i} \rfloor}$. This implies that the sum $\sum_{i \in \N} c_i(f) g_i$ converges to a $g \in \Cp \ens{z}$. We have $g(z) = \sum_{i \in \N} c_i(f) g_i(z) = 0$ for any $z \in \Zp$, and hence $g = 0$ by identity theorem. By the norm convergence of the sum defining $g$, higher derivatives of $g = 0$ are computable by termwise derivation. Therefore, for any $h \in \N$, we have
\be
0 & = & \frac{d^h g}{dz^h} = \sum_{i \in \N} c_i(f) \frac{d^h g_i}{dz^h} \\
& = & \sum_{i \in \N} c_i(f) \frac{d^h}{dz^h} \sum_{j=0}^{i} \binom{i}{j} (-1)^{i-j} (1 + p^{r+1}jz)^s (1+z_0)^j \\
& = & \sum_{i \in \N} c_i(f) \sum_{j=0}^{i} \binom{i}{j} (-1)^{i-j} \binom{s}{h} (p^{r+1}j)^h (1 + p^{r+1}jz)^{s-h} (1+z_0)^j \\
& = & \binom{s}{h} p^{(r+1)h} \sum_{i \in \N} c_i(f) \sum_{j=0}^{i} \binom{i}{j} (-1)^{i-j} j^h (1 + p^{r+1}jz)^{s-h} (1+z_0)^j,
\ee
and hence we obtain
\be
0 = \binom{s}{h} p^{(r+1)h} \sum_{i \in \N} c_i(f) \sum_{j=0}^{i} \binom{i}{j} (-1)^{i-j} j^h (1+z_0)^j,
\ee
by substituting $z = 0$. Assume $s \notin \N$. Then for any $h \in \N$, we have $\binom{s}{h} \neq 0$ and hence
\be
\sum_{i \in \N} c_i(f) \sum_{j=0}^{i} \binom{i}{j} (-1)^{i-j} j^h (1+z_0)^j = 0.
\ee
By Lemma \ref{convolutive Vandermonde}, we obtain $f = 0$.
\end{proof}

\begin{proof}[Proof of Theorem \ref{weak irreducibility of pi_n}]
Let $M \subset M_n$ be a generically weakly closed pure left $O_{\Cp}[[B]]$-submodule. We denote by $\mathfrak{a} \subset O_{\Cp}[[T]]$ the ideal corresponding to $M$ by the $B$-equivariant $O_{\Cp}$-linear isomorphism $O_{\Cp}[[T]] \cong M_n$. Since $\mathfrak{a} \subset O_{\Cp}[[T]]$ is generically weakly closed, $\Cp \otimes_{O_{\Cp}} \mathfrak{a}$ is weakly closed in $\Lambda$. Therefore, there exists an $f \in \mathfrak{a}$ such that $V(\ens{f}) = V(\mathfrak{a})$ and every zero of $f$ is a $p$-power root of $1$ by Lemma \ref{principal}.

\vs
First, suppose $V(\mathfrak{a}) = \emptyset$. Then we have $\Cp \otimes_{O_{\Cp}} \mathfrak{a} = \Lambda$ by generic weak closedness of $\mathfrak{a}$ and \cite{Laz62} Proposition 10, and hence $\mathfrak{a} = O_{\Cp}[[T]]$ by the purity of $\mathfrak{a}$. Therefore, we obtain $M = M_n$.

\vs
Next, suppose $V(\mathfrak{a}) \neq \emptyset$. Then we have $f = 0$ by $(n(0) - n(1))_p \notin \N$ and Lemma \ref{p-power root unstability}, and hence $V(\mathfrak{a}) = V(\ens{f}) = \set{z \in O_{\Cp}}{\v{z} < 1}$. This implies $\Cp \otimes_{O_{\Cp}} \mathfrak{a} = \ens{0}$ by \cite{Laz62} Proposition 10, and hence $\mathfrak{a} = \ens{0}$. Therefore, we obtain $M = \ens{0}$.

\vs
We have obtained either $M = \ens{0}$ or $M = M_n$. We conclude that $M_n$ is generically weakly simple, and hence $(V_n,\pi_n)$ is weakly irreducible by Theorem \ref{weak irreducibility criterion}.
\end{proof}

% \newpage
\vspace{0.3in}
\addcontentsline{toc}{section}{Acknowledgements}
\noindent {\Large \bf Acknowledgements}
\vspace{0.2in}

\noindent
I thank K.\ Eda for informing me of Specker phenomenon. I thank T.\ Hayashi for recalling me of representation theory. I thank K.\ Tokimoto for helping me to consider applications of Banach representations over $\Cp$. I thank K.\ Ishizuka for answering references of preceding studies. I thank all people who helped me to learn mathematics and programming. I also thank my family.


\addcontentsline{toc}{section}{References}
\begin{thebibliography}{99}

\bibitem[BGR84]{BGR84} S.\ Bosch, U.\ G\"untzer, and R.\ Remmert, {\it Non-Archimedean Analysis A Systematic Approach to Rigid Analytic Geometry}, Grundlehren der mathematischen Wissenschaften 261, A Series of Comprehensive Studies in Mathematics, Springer, 1984.

\bibitem[Bla92]{Bla92} A.\ Blass, {\it Cardinal Characteristics and the Product of Countably Many Infinite Cyclic Groups}, 	arXiv:9209203, 1992.

\bibitem[Dal00]{Dal00} H.\ G.\ Dales, {\it Banach Algebra and Automatic Continuity}, London Mathematical Society Monographs New Series, Volume 24, Oxford Science Publications, 2000.

\bibitem[Eda82]{Eda82} K.\ Eda, {\it A Boolean Power and a Direct Product of Abelian Groups}, Tsukuba Journal of Mathematics, Volume 6, Number 2, pp.\ 187--194, 1982.

\bibitem[Eda83]{Eda83} K.\ Eda, {\it A note on subgroups of $\Z^{\N}$}, Abelian group theory (R.\ G\"obel and E.\ Walker,eds.), Lecture Notes in Mathematics, Volume 1006, pp.\ 371--374, Springer, 1983.

\bibitem[E{\L}54]{EL54} A.\ Ehrenfeucht and J.\ {\L}o\'s, {\it Sur le produits cart\'esiens des groupes cycliques infinis}, Bulletin de l'Academie Polonaise des Sciences, Volume III, Num\'ero 2, pp.\ 261--263, 1954.

\bibitem[Laz62]{Laz62} M.\ Lazard, {\it Les z\'eros d'une fonction analytique d'une variable sur un corps valu\'e complet}, Publications math\'ematiques de l'I.H.\'E.S., Tome 14, pp.\ 47--75, 1962.

\bibitem[Mih14]{Mih14} T.\ Mihara, {\it Characterisation of the Berkovich Spectrum of the Banach Algebra of Bounded Continuous Functions}, Documenta Mathematica, Volume 19, pp.\ 769--799, 2014.

\bibitem[Mih15]{Mih15} T.\ Mihara, {\it Galois Representations Associated to $p$-adic Families of Modular Forms of Finite Slope}, arXiv:1504.04728, 2015.

\bibitem[Mih21-1]{Mih21-1} T.\ Mihara, {\it Duality theory of $p$-adic Hopf algebras}, Categories and General Algebraic Structures with Applications, Volume 14, Issue 1, pp.\ 81--118, 2021.

\bibitem[Mih21-2]{Mih21-2} T.\ Mihara, {\it Schneider--Teitelbaum duality for locally profinite groups}, Categories and General Algebraic Structures with Applications, Volume 14, Issue 1, pp.\ 119--166, 2021.

\bibitem[MN89]{MN89} K.\ Morita and J.\ Nagata, {\it Topics in general topology}, North-Holland mathematical library, Volume 41, North-Holland, 1989.

\bibitem[Roo78]{Roo78} A.\ C.\ M.\ van Rooij, {\it Non-Archimedean Functional Analysis}, Marcel Dekker, 1978.

\bibitem[Sch84]{Sch84} W.\ H.\ Schikhof, {\it Locally Convex Spaces over Nonspherically Complete Valued Fields}, Groupe d'\'etude d'Analyse ultram\'etrique, 12e ann\'ee, Number 24, pp.\ 1--33, 1984--1985.

\bibitem[Sch95]{Sch95} W.\ H.\ Schikhof, {\it A Perfect Duality between $p$-adic Banach Spaces and Compactoids}, Indagationes Mathematicae, Volume 6, Issue 3, pp.\ 325--339, 1995.

\bibitem[Sch99]{Sch99} W.\ H.\ Schikhof, {\it Banach spaces over nonarchimedean valued fields}, Report Number 9937, Radboud University Nijmegen (formerly Katholieke Universiteit Nijmegen), 1999.

\bibitem[Sch02]{Sch02} P.\ Schneider, {\it Nonarchimedean Functional Analysis}, Springer Monographs in Mathematics, Springer, 2002.

\bibitem[Spe50]{Spe50} E.\ Specker, {\it Additive gruppen von Folgen ganzer Zahlen}, Portugaliae Mathematica, Volume 9, pp.\ 131--140, 1950.

\bibitem[ST02]{ST02} P.\ Schneider and J.\ Teitelbaum, {\it Banach space representations and Iwasawa theory}, Israel Journal of Mathematics, Volume 127, Issue 1, pp.\ 359--380, 2002.

\bibitem[Woo93]{Woo93} W.\ H.\ Woodin, {\it A Discontinuous Homomorphism from $C(X)$ without CH}, Journal of the London Mathematical Society, Volume s2-48 (2), p.\ 299-315, 1993.

\bibitem[Zee55]{Zee55} E.\ C.\ Zeeman, {\it On direct sums of free cycles}, Journal of the London Mathematical Society, Volume s1-30, Issue 2, pp.\ 195--212, 1955.

\end{thebibliography}
\end{document}